    \newtheorem{proposition}{Proposition}
    \newtheorem{corollary}{Corollary}
\numberwithin{equation}{section}
\pgfplotsset{compat=1.18}
\newcommand{\Interval}[1]{#1}
\newcommand{\IdxInterval}{i}
\newcommand{\IdxState}{s}
\newcommand{\StateOff}{\textup{\texttt{off}}}
\newcommand{\StateIdle}{\textup{\texttt{idle}}}
\newcommand{\StateProc}{\textup{\texttt{proc}}}
\newcommand{\LBBDG}{\texttt{LBBD-opt}\xspace}
\newcommand{\LBBD}{\texttt{LBBD-heur}\xspace}
\newcommand{\Freecp}{\texttt{freecp}\xspace}
\newcommand{\ILPF}{\texttt{ILP-fsws}\xspace}
\newcommand{\ILPC}{\texttt{ILP-cpws}\xspace}
\newcommand{\ILP}{\texttt{ILP}\xspace}
\newcommand{\MD}{\operatorname{MD}}
\newcommand{\TransTimeSymbol}{T}
\DeclareDocumentCommand \TransTime {o} {
  \IfNoValueTF{#1} {
        \TransTimeSymbol
  }{
        \TransTimeSymbol(#1)
  }
}
\newcommand{\TransPowerSymbol}{P}
\DeclareDocumentCommand \TransPower {o} {
  \IfNoValueTF{#1} {
        \TransPowerSymbol
  }{
        \TransPowerSymbol(#1)
  }
}
\newcommand{\IdxAnother}[1]{#1^{\prime}} %
\newcommand{\m}[1]{\boldsymbol{#1}}
\journal{European Journal of Operational Research}
\def\ps@pprintTitle{%
  \let\@oddhead\@empty
  \let\@evenhead\@empty
  \def\@oddfoot{\reset@font\hfil\thepage\hfil}%
  \let\@evenfoot\@oddfoot
}
\begin{document}

\begin{frontmatter}

\title{Resource-Constrained Project Scheduling with Time-of-Use Energy Tariffs and Machine States: A Logic-Based Benders Decomposition approach}
\author[aff1]{Corentin Juvigny}
\ead{corentin.juvigny@cvut.cz}
\author[aff1]{Antonín Novák}
\ead{antonin.novak@cvut.cz}
\author[aff1]{Jan Mandík}
\author[aff1]{Zdeněk Hanzálek}
\ead{zdenek.hanzalek@cvut.cz}

\affiliation[aff1]{organization={Czech Institute of Informatics, Robotics and Cybernetics, Czech Technical University in Prague}, addressline={Jugoslávských partyzánů 1580/3}, postcode={160 00}, city={Prague 6}, country={Czech Republic}}

\begin{abstract}

In this paper, we investigate the \emph{Resource-Constrained Project Scheduling Problem} (RCPSP) with \emph{Time-of-Use}  (TOU) \emph{energy tariffs} and \emph{machine states}, a variant of RCPSP for production scheduling, where energy price is part of the criteria and one highly energy-demanding machine can be in one of the following three states: \StateProc, \StateIdle, or \StateOff. The problem involves scheduling all tasks, respecting precedence constraints and resource limitations, while minimizing the combination of the overall makespan and the Total Energy Cost (TEC), which varies according to the TOU tariffs, which can take negative values. We propose two novel approaches to solve it: a \emph{monolithic Constraint Programming} (CP) approach and a \emph{Logic-Based Benders Decomposition} (LBBD) approach. The latter combines a master problem handling the energy cost solved using \emph{Integer Linear Programming} (ILP) with a subproblem handling the RCPSP, resolved using CP. Both approaches outperform the \emph{monolithic compact} ILP counterpart, but the LBBD significantly outperforms the monolithic CP in most cases, especially when the makespan criterion is not included in the objective function, solving to optimality instances with up to 480 tasks.
Finally, we propose a way to generalize our LBBD approach to other problems sharing similar characteristics, and applied it to various problems, such as an RCPSP with blocking times \& total weighted tardiness criterion, or a flexible job shop.

\end{abstract}

\begin{keyword}
Resource-Constrained Project Scheduling Problems \sep Energy-price-aware Scheduling \sep Logic-Based Benders Decomposition \sep Mixed-Integer Linear Programming \sep Constraint Programming
\end{keyword}

\end{frontmatter}
\section{Introduction}


The industrial sector is a very energy-demanding area, accounting for around half of the total energy consumption of the world, a share that has never stopped growing in recent years \citep{fang2011new}.
The rising costs induced by energy production, combined with increasing concerns about the viability of intensified production as well as new public efforts fostering more durable production processes, have led an increasing number of studies to concentrate on energy-aware project scheduling \citep{du2021energy}.

\emph{Time-of-use} (TOU) tariffs have emerged as a widely adopted strategy among energy suppliers of industrialized nations. This dynamic pricing mechanism serves two primary purposes: it facilitates the balancing of daily energy consumption patterns and mitigates peak load concentrations during specific hours \citep{hung2018modeling,chen2019scheduling}. It consists of adjusting the electricity tariffs according to the fluctuation of demand, resulting in variable tariffs that can change quickly. This pricing structure provides consumers with significant opportunities to reduce the costs of their energy use by shifting their major consumption activities to off-peak periods \citep{geng2020bi}.

In industry, some energy-intensive operations, such as furnaces or industrial ovens, may benefit from a more efficient management strategy that involves maintaining a machine in an intermediate state during periods of inactivity, rather than completely shutting it down. Indeed, turning on a machine may consume a significant amount of energy and time.
However, the addition of these machine states leads to complex optimization problems~\citep{BENEDIKT2025}.
In real industrial processes, not all operations are energy-intensive. They may be part of a more global project scheduling, with precedence constraints between the processes and limited capacities on resources involved, a category of scheduling problems called \emph{Resource-Constrained Project Scheduling Problems} (RCPSPs) \citep{herroelen1998resource}.

To address these challenges, we study an RCPSP problem with TOU tariffs and machine states. The states of energy intensive resource are modeled in a transition diagram, indicating the duration of the transitions between states and their associated energy consumptions.
We assume that this energy-intensive resource has a unitary capacity. The other resources have integer capacities. Moreover, a task requiring this energy-intensive resource does not use other resources.
We consider two objective functions. The first minimizes the total energy cost (TEC) induced by energy-intensive tasks with respect to TOU tariffs, and can be denoted as $PS, 1\text{TOU}|\text{prec,states}|\text{TEC}$ in the extended Graham's three-field notation~\citep{GRAHAM1979287}. The second combines the previous TEC with a minimization of the makespan over all tasks, i.e., $PS, 1\text{TOU}|\text{prec,states}|\text{TEC},C_{\max}$.
The reader can refer to Table~A.1 of the supplementary materials for a summary of the acronyms used throughout this paper.

\subsection{Paper contributions and outline}

The main contributions of this article are the following.
\begin{itemize}[itemsep=0pt, parsep=0pt]
    \item We introduce a new problem combining the RCPSP with machine states and TOU tariffs. To model it, we detail two compact models, which we henceforth refer to as \emph{monolithic} models, one based on \emph{Constraint Programming} (CP) and one based on \emph{Integer Linear Programming} (ILP). We say monolithic because they model the problem as a whole and are not decomposed.
    \item We put forth a \emph{Logic-based Benders decomposition} (LBBD) approach to solve the problem more efficiently. Experiments outline to what extent it outperforms monolithic formulations, especially in larger instances or when the makespan criterion is not considered, i.e., for $PS, 1\text{TOU}|\text{prec,states}|\text{TEC}$.
    \item 
    We demonstrate the \emph{generality} of our LBBD approach, which segregates energy-related parameters within the master problem and employs a classical scheduling model as its subproblem.
    In particular, we apply the LBBD approach to problems based on an RCPSP with blocking times \& total weighted tardiness criterion ($PSm, 1\text{TOU}|\text{intree,states}|\text{TEC},\sum_jw_jT_j$), and a flexible job shop ($FJm,1\text{TOU}|\text{prec,states}|\text{TEC},C_{\max}$).
\end{itemize}

The article is organized as follows.
In Section~\ref{section:literature-review}, we review the current state of the art literature and delineate the position of our article among them.
In Section~\ref{section:problem_statement}, we present our hybrid energy-aware / RCPSP variant and propose an MILP formulation to solve it.
Then, we present a CP approach in Section~\ref{sec:constraint_programming} and a new LBBD-based approach in Section~\ref{section:LBBD}.
We assess and compare the proposed approaches in Section~\ref{section:experiments}.
Afterwards, Section~\ref{section:generalization-of-LBBD} investigates the use of our LBBD approach to other problems.
Finally, Section~\ref{section:conclusion} concludes the article and exposes future avenues of research.

\section{Literature Review}
\label{section:literature-review}

This section surveys the three research streams that converge in the present work: resource-constrained project scheduling, energy-aware scheduling under time-of-use tariffs with machine-states, and
logic-based Benders decomposition for scheduling.
For each stream, we identify the modeling choices and methodological limits that motivate our contribution, before making the relationship explicit at the end of the section.

\begin{table}[tb]
\centering
\renewcommand{\arraystretch}{1.25}
\begin{small}
\resizebox{\textwidth}{!}{%
\begin{tabular}{lcccll}
\toprule
\textbf{Reference} & \textbf{Machine env.} & \textbf{TOU} & \textbf{States} & \textbf{Objective} & \textbf{Method} \\
\midrule
\cite{maghsoudlou2021framework}    & RCPSP (multi-skill) & $\checkmark$ & $-$          & TEC                                                             & Metaheuristic        \\
\cite{pouramin2024multi}           & RCPSP (multi-mode)  & $\checkmark$ & $-$          & $C_{\max}$, cost               & ILP/Metaheuristic \\
\cite{gaggero2023exact}            & Parallel machines   & $\checkmark$ & $-$          & $C_{\max}$, TEC                                                 & MILP/heuristic     \\
\cite{XIECHEN2026107567}           & Unrel. parallel mach.\ (stoch.) & $\checkmark$ & $-$          & $C_{\max}$, TEC                                                 & MILP/MISOCP, approx. \\
\cite{ZHAO2025111754}              & Job shop            & $\checkmark$ & $-$          & TEC                                                             & Metaheuristic        \\
\cite{ho2022exact}                 & Flow shop           & $\checkmark$ & $-$          & TEC                                                             & MILP                 \\
\cite{shrouf2014optimizing}        & $1$ machine         & $\checkmark$ & $\checkmark$ & TEC                                                             & Genetic algorithm          \\
\cite{aghelinejad2018production}   & $1$ machine         & $\checkmark$ & $\checkmark$ & TEC                                                             &  ILP           \\
\cite{aghelinejad2019complexity}   & $1$ machine         & $\checkmark$ & $\checkmark$ & TEC                                                             & Complexity results           \\
\cite{benedikt2020power}           & $1$ machine         & $\checkmark$ & $\checkmark$ & TEC                                                 & ILP/CP + SPACES       \\
\cite{BENEDIKT2025}                & $1$ machine         & $\checkmark$ & $\checkmark$ & TEC                                                 & Branch and Bound      \\
\cite{hall2026optimal}             & $1$ machine (var.\ speeds) & $\checkmark$ & $\checkmark$ & TEC                                                      & Strongly poly.\ alg.  \\
\cite{zuccato2025energy}           & Flexible Job Shop           & $-$          & $\checkmark$ & $C_{\max}$, TEC, $\sum T_j$                                     & CP                    \\
\cite{naderi2022critical}          & Flexible Job Shop          & $-$          & $-$          & $C_{\max}$                                                      & LBBD                  \\
\midrule
\textbf{This work}                & \textbf{RCPSP} & $\checkmark$ & $\checkmark$ & $\boldsymbol{C_{\max}}$\textbf{, TEC} & \textbf{LBBD} \\
\bottomrule
\end{tabular}%
}
\caption{Comparison of the most closely related works. \emph{States} refers to
explicit power-state transition graphs on the energy-intense resource; \emph{TOU}
to time-of-use energy tariffs;
\emph{MISOCP} to mixed-integer second-order cone program.
}
\label{tab:related-work}
\end{small}
\end{table}



\subsection{Resource-Constrained Project Scheduling}

The Resource-Constrained Project Scheduling Problems (RCPSPs), introduced by~\cite{wiest1967heuristic}, are widely studied strongly \textsf{NP-Hard} problems~\citep{blazewicz1983scheduling}, in which solving even moderate instances to optimality remains a computational challenge.
The literature has explored the problem along two methodological axes. \emph{Exact methods} include branch-and-bound \citep{mohring2003solving}, constraint programming \citep{berthold2010constraint,laborie2018ibm}, and hybrid MIP/CP approaches \citep{chakrabortty2015resource}; constraint programming has proved especially competitive \citep{HEINZ2025} for makespan criterion. 
\emph{Heuristic methods} encompass priority-rule dispatching and metaheuristics~\citep{munlin2018solving,pellerin2020survey}. A comprehensive survey on this topic has recently been published by~\cite{artigues2025fifty}.

However, none of the RCPSP variants surveyed above model the internal power dynamics of the constrained resources. Machine dynamics are often disregarded, and energy consumption, when accounted for, is uniformly attributed to processing modes \citep{zheng2015reduction,maghsoudlou2021framework,pouramin2024multi}. This is precisely the modeling gap we aim to fill by coupling RCPSP with explicit machine-state transitions on the energy-intensive resource.

\subsection{Energy-Aware Scheduling Under TOU Tariffs and Machine States}

\paragraph{TOU scheduling without machine states}
A substantial body of work minimizes electricity costs under time-of-use (TOU) tariffs on classical machine environments: parallel machines~\citep{ding2015parallel,gaggero2023exact}, job shop~\citep{kurniawan2021distributed,ZHAO2025111754}, and flow shop and hybrid flow shop~\citep{fazli2018energy,ho2022exact}. Multi-objective extensions trade off energy cost against makespan~\citep{schulz2020multi,park2022energy,gaggero2023exact} or tardiness~\citep{rocholl2020bi,kurniawan2021distributed}. More recently, \citet{XIECHEN2026107567} extended this setting to unrelated parallel machines with uncertain processing times, proposing a chance-constrained bi-objective formulation that they convert into deterministic MILP and MISOCP models, together with a $(1+\epsilon)$-approximation scheme for the makespan–TEC Pareto front. A comprehensive review of this stream is provided by~\citet{gahm2016energy}. Although special TOU structures can yield polynomial algorithms~\citep{fang2016scheduling}, the general case remains strongly \textsf{NP-Hard}, and all these formulations share the assumption that, in any given time period, a machine is either actively processing or completely shut down.

\paragraph{Single-machine TOU scheduling with power-state transitions}
A tighter and more realistic model allows a machine to transition between a set of power states whose switch costs and durations interact non-trivially with the TOU tariff.
\cite{shrouf2014optimizing} introduced this setting for the single-machine problem $1,\text{TOU}|\text{states}|\text{TEC}$ and solved it with genetic algorithms.
\cite{aghelinejad2018production} proposed a compact ILP formulation, and~\cite{aghelinejad2019complexity} established that while fixing the job sequence yields a polynomial subproblem, the problem with free sequencing is strongly \textsf{NP-Hard}.
The decisive methodological advance came with \cite{benedikt2020power}, who observed that, given a fixed job sequence, the optimal state trajectory can be computed in polynomial time via a pre-processing step called SPACES which is based on shortest-path calculation in time-state graph. This reduces the ILP/CP model size dramatically. Building on this, \cite{BENEDIKT2025} designed a dedicated branching scheme with tailored heuristics and achieved speedups of two orders of magnitude over \cite{benedikt2020power}.
In a parallel development, \cite{hall2026optimal} extended the \cite{shrouf2014optimizing} setting with $q$ discrete processing speeds on top of \emph{sleep} and \emph{powered-down} idle states, and obtained the first \emph{strongly polynomial-time} exact algorithm for this class via a block decomposition combined with a shortest-path argument, outperforming CPLEX by several orders of magnitude. They further showed that two seemingly minor changes---restricting speed changes to task boundaries, or allowing power-down at any task completion---render the problem strongly \textsf{NP-complete}, locating the single-machine variant sharply at the tractability frontier.
These works establish a clear progression on the \emph{single-machine} variant. What they leave open is whether the structural insight behind SPACES---the decoupling of scheduling decisions from state-trajectory optimization---survives when the machine is one of \emph{several resources} competing for activities in an RCPSP, and how a decomposition should be organized to exploit it in that richer context.

\paragraph{RCPSP with energy and machine states}
To the best of our knowledge, no prior work combines RCPSP with power-state machine decisions on the energy-intensive resources. \cite{zuccato2025energy} recently incorporated machine modes, setup operations, and transport times into a flexible job-shop variant via constraint programming, but their model does not include explicit power-state transition graphs or TOU tariffs. The RCPSP papers that do address energy consumption \citep{zheng2015reduction, maghsoudlou2021framework,pouramin2024multi} model energy through multi-modal activity durations rather than machine-state dynamics, and therefore do not capture the idle-period optimization that is central to the present problem.

\subsection{Logic-Based Benders Decomposition for Scheduling}

Classical Benders decomposition~\citep{bnnobrs1962partitioning} reformulates a problem as a mixed-integer linear programming (MILP) master problem coupled with a continuous linear programming (LP) subproblem. The algorithm iteratively augments the master problem with \emph{Benders cuts} derived from each integer-feasible solution of the master, and terminates once an optimal solution is obtained. However, this approach is inherently restricted to cases in which the subproblem admits a linear formulation, thereby limiting its applicability to a relatively narrow class of combinatorial optimization problems. To address this limitation, \citet{hooker2003logic} introduced \emph{Logic-Based Benders Decomposition} (LBBD), which extends the framework to allow integer or more general combinatorial subproblems, with problem-specific inference procedures used to derive cutting planes from the subproblem solutions. The effectiveness of integrating MILP master problems with constraint programming (CP) subproblems was empirically demonstrated by \citet{jain2001algorithms} and \citet{hooker2007planning} in the context of scheduling, where substantial computational speedups were reported relative to purely MILP-based or CP-based methods.

More recently, LBBD has gained traction in energy- and sustainability-aware scheduling: \citet{lei2021decomposition} applies it to parallel machine scheduling of step-deteriorating jobs minimizing a combination of tardiness and extra energy consumption, \citet{hu2024logic} develops a bi-objective $\varepsilon$-LBBD for parallel machine selection with release dates and resource consumption, \citet{yang2024logic} tackles order acceptance and scheduling on heterogeneous factories under carbon caps through a Branch-and-Check LBBD scheme, and \citet{seman2024benders} adapts the framework to energy-aware task scheduling of nanosatellite constellations. These works confirm the suitability of LBBD whenever the energetic component naturally decouples into a global cost/capacity master problem and a per-resource sequencing subproblem.

\subsection{Positioning the Present Contribution}

The foregoing analysis delineates a specific research gap at the intersection of the three literature streams reviewed above: no existing study jointly addresses the RCPSP with TOU tariffs while simultaneously incorporating power-state machine decision modeling, and no available logic-based Benders decomposition (LBBD) framework proposes specialized cutting planes for this combined setting. Table~\ref{tab:related-work} articulates this gap explicitly by contrasting the core characteristics of the most relevant prior contributions.

The present paper extends previous work through three novel contributions, elaborated in the following sections. First, the new variant of the problem is a \emph{strict generalization} of the single-machine problems studied by~\cite{benedikt2020power} and~\cite{BENEDIKT2025} including the RCPSP components in the energy problem. Second, the proposed LBBD exploits the SPACES shortest-path structure~\citep{benedikt2020power} as a natural decomposition boundary; its novelty lies in the tailored Benders cuts derived from the state-transition subproblem. Third, we show that the same architecture can be applied to many scheduling problem coupling combinatorial assignment with TOU-aware machine-state decisions, something that has not been done yet in the literature. In particular, we expose two examples, in which the assignment part is an RCPSP with blocking time periods and a flexible job-shop scheduling, respectively. This approach is interesting in real applications as it permits smooth integrations of the energy component to scheduling problems for which efficient solving algorithms exist.

\section{Problem statement}
\label{section:problem_statement}

In this section, we describe the RCPSP with machine states and time-of-use (TOU) tariffs, along with its modeling by a compact monolithic ILP.
Consider a \emph{time horizon} $h \in \mathbb{N}$ divided into non-overlapping \emph{unitary intervals} $\mathcal{I} = \{1, \dots,h\}$. 
Each interval is associated with an \emph{energy cost} given by the vector $c = (c_1, c_2, \dots, c_h) \in \mathbb{R}^h$. 
An example of a real energy cost profile is displayed in Figure~\ref{fig:example-profile}.

 \begin{figure}[t]
    \centering
    \pgfplotstableread[col sep=comma]{images_sources/new_real_energy_costs_eur.csv}\datatable
    \begin{tikzpicture}
        \begin{axis}[
            width=0.75\textwidth,
            height=0.25\textwidth,
            ybar, 
            label style={font=\small},
            tick label style={font=\scriptsize},
            xtick={0,11,23,35,47},
            xticklabels={13th Fri 00:00, 13th Fri 12:00, 14th Sat 00:00, 14th Sat 12:00, 15th Sun 00:00},
            ylabel={cost [EUR/MWh]},
            xlabel={interval $\IdxInterval$ [-]},
            ymajorgrids,
            bar width=3pt,
            enlarge x limits={abs=0.75cm}, 
        ]
        \addplot table[x=idx, y=cost] {\datatable};
        \end{axis}
    \end{tikzpicture}
    \caption{Example of a 48-hour part of the energy profile with real electricity costs in the day-ahead market in the Czech Republic from June 2025 provided by Czech OTE.}
    \label{fig:example-profile}
\end{figure}

Let $\mathcal{R} = \{R_0, R_1, \dots, R_m\}$ be a set of \emph{resources}, with $m\in\mathbb{N}$. We note $\rho_k$ the \emph{resource capacity} of a resource $k \in \mathcal{R}$. Let $\mathcal{T} = \{j_1, j_2, \dots, j_n\}$ be a set of \emph{non-preemptive tasks} to be processed. A \emph{processing time} $p_j$ and a \emph{resource requirement} $r_k^j,\ k \in \mathcal{R}$ are associated to each task $j \in \mathcal{T}$.
We want to calculate the \emph{starting time} $S_j$ of the task $j \in \mathcal{T}$, from which we can induce its \emph{completion time} $C_j = S_j + p_j$.
For each resource $k \in \mathcal{R}$, the sum of the resource requirements for the tasks processed simultaneously must not exceed the capacity of that resource.
Moreover, all tasks must be processed before the end of the time horizon.

\begin{figure}[t]
    \centering
    \begin{tabular}{ccc}
    {
\newcommand{\ImNumCols}{3} 
\newcommand{\ImNumRows}{4} 
\newcommand{\ImGridSize}{0.7} 

\begin{tikzpicture}
	\foreach \x in {0,...,\ImNumCols}
	{
		\foreach \y in {0,...,\ImNumRows}
		{
			\pgfmathsetmacro{\crdX}{(\x-1) * \ImGridSize + \ImGridSize/2};
        		\pgfmathsetmacro{\crdY}{(\y-1) * \ImGridSize + \ImGridSize/2};        		
        		\coordinate (\x-\y) at (\crdX,\crdY);        		
		}
	}
	
	\pgfmathsetmacro{\drawRows}{\ImNumRows - 1};
	
	\draw (0,0) -- (0,\drawRows*\ImGridSize);
	\draw (0,\drawRows*\ImGridSize) -- (\ImNumCols*\ImGridSize,\drawRows*\ImGridSize);
	
	\node at (1-3) {4};
	\node at (1-2) {2};
	\node at (1-1) {5};
	
	\node at (2-3) {2};
	\node at (2-2) {2};
	\node at (2-1) {$\infty$};
	
	\node at (3-3) {1};
	\node at (3-2) {$\infty$};
	\node at (3-1) {0};
	
	\node[anchor=east] at ($(0-3) + (\ImGridSize/3,0)$) {\scriptsize $\StateProc$};
	\node[anchor=east] at ($(0-2) + (\ImGridSize/3,0)$) {\scriptsize $\StateIdle$};
	\node[anchor=east] at ($(0-1) + (\ImGridSize/3,0)$) {\scriptsize $\StateOff$};	
	
	\node at ($(1-4) + (0,-\ImGridSize/5)$) {\scriptsize $\strut \StateProc$};
	\node at ($(2-4) + (0,-\ImGridSize/5)$) {\scriptsize $\strut \StateIdle$};
	\node at ($(3-4) + (0,-\ImGridSize/5)$) {\scriptsize $\strut \StateOff$};
	
	\draw ($(1-3) + (-\ImGridSize/2,\ImGridSize/2)$) -- ($(1-3) + (-3*\ImGridSize/2,\ImGridSize)$);
	\node[anchor=north east] at ($(0-4) + (0,-\ImGridSize/6)$) {$\IdxState$};
	\node[anchor=south west] at ($(0-4) + (-\ImGridSize/5,-\ImGridSize/6)$) {$\IdxAnother{\IdxState}$};
	
	\node at (2-0) {$\strut \TransPower[\IdxState, \IdxAnother{\IdxState}]$};
\end{tikzpicture}
} &
    {
\newcommand{\ImNumCols}{3} 
\newcommand{\ImNumRows}{4} 
\newcommand{\ImGridSize}{0.7} 

\begin{tikzpicture}
	\pgfmathsetmacro{\prepareLayersX}{\ImNumCols+1};
	\pgfmathsetmacro{\prepareLayersY}{\ImNumRows+1};
	\foreach \x in {0,...,\prepareLayersX}
	{
		\foreach \y in {0,...,\prepareLayersY}
		{
			\pgfmathsetmacro{\crdX}{(\x-1) * \ImGridSize + \ImGridSize/2};
        		\pgfmathsetmacro{\crdY}{(\y-1) * \ImGridSize + \ImGridSize/2};        		
        		\coordinate (\x-\y) at (\crdX,\crdY);        		
		}
	}
	
	\pgfmathsetmacro{\crdX}{0*\ImGridSize};
	\pgfmathsetmacro{\drawRows}{\ImNumRows - 1};
	
	\draw (0,0) -- (0,\drawRows*\ImGridSize);
	\draw (0,\drawRows*\ImGridSize) -- (\ImNumCols*\ImGridSize,\drawRows*\ImGridSize);
	
	\node at (1-3) {1};
	\node at (1-2) {1};
	\node at (1-1) {2};
	
	\node at (2-3) {1};
	\node at (2-2) {1};
	\node at (2-1) {$\infty$};
	
	\node at (3-3) {1};
	\node at (3-2) {$\infty$};
	\node at (3-1) {1};
	
	\node[anchor=east] at ($(0-3) + (\ImGridSize/3,0)$) {\scriptsize $\StateProc$};
	\node[anchor=east] at ($(0-2) + (\ImGridSize/3,0)$) {\scriptsize $\StateIdle$};
	\node[anchor=east] at ($(0-1) + (\ImGridSize/3,0)$) {\scriptsize $\StateOff$};	
	
	\node at ($(1-4) + (0,-\ImGridSize/5)$) {\scriptsize $\strut \StateProc$};
	\node at ($(2-4) + (0,-\ImGridSize/5)$) {\scriptsize $\strut \StateIdle$};
	\node at ($(3-4) + (0,-\ImGridSize/5)$) {\scriptsize $\strut \StateOff$};
	
	\draw ($(1-3) + (-\ImGridSize/2,\ImGridSize/2)$) -- ($(1-3) + (-3*\ImGridSize/2,\ImGridSize)$);
	\node[anchor=north east] at ($(0-4) + (0,-\ImGridSize/6)$) {$\IdxState$};
	\node[anchor=south west] at ($(0-4) + (-\ImGridSize/5,-\ImGridSize/6)$) {$\IdxAnother{\IdxState}$};

	\node at (2-0) {$\strut \TransTime[\IdxState, \IdxAnother{\IdxState}]$};
\end{tikzpicture}
} &
    \begin{tikzpicture}[auto,-stealth]
\tikzset{
    state/.style={circle,draw=black,inner sep=1pt, minimum size=0.8cm},
    every edge/.append style={-stealth,thick}
}
    
\node[state] at (1,0.6) (nProc) {\scriptsize $\strut \StateProc$};
\node[state] at (2.5,-0.7) (nIdle) {\scriptsize $\strut \StateIdle$};
\node[state] at (-0.5,-0.7) (nOff)  {\scriptsize $\strut \StateOff$};

\path (nProc) edge[bend left=15] node[above right, pos=0.8] {\scriptsize 1/2} (nIdle);
\path (nIdle) edge[bend left=15] node[below left, pos=0.2] {\scriptsize 1/2} (nProc);

\path (nProc) edge[bend left=15] node[below right, pos=0.8] {\scriptsize 1/1} (nOff);
\path (nOff) edge[bend left=15] node[above left, pos=0.2] {\scriptsize 2/5} (nProc);

\path (nProc) edge[loop above,looseness=4, out=105, in=75] node[left,pos=0.1] {\scriptsize 1/4} (nProc);

\path (nOff) edge[loop below,looseness=4, in=255, out=285] node[right,pos=0.2] {\scriptsize 1/0} (nOff);

\path (nIdle) edge[loop below,looseness=4, in=255, out=285] node[right,pos=0.2] {\scriptsize 1/2} (nIdle);

\node at (0,-2) {};
\end{tikzpicture}  \\
    \end{tabular}
    \caption{Parameters of the transition power function \( \TransPower[\IdxState, \IdxAnother{\IdxState}] \) and transition time function \( \TransTime[\IdxState, \IdxAnother{\IdxState}] \), and the corresponding transition graph, where every edge from $\IdxState$ to $\IdxAnother{\IdxState}$ is labeled by $\TransTime[\IdxState, \IdxAnother{\IdxState}]$/$\TransPower[\IdxState, \IdxAnother{\IdxState}]$.}
    \label{fig:example-func-power-time}
\end{figure}

Additionally, let $G = (V, A)$ be the directed graph that represents the \emph{precedence constraints} between the tasks. Each node of $G$ is a task of $\mathcal{T}$, and each arc $(v_j, v_{j'}) \in A$ means that task $j$ precedes task $j'$, i.e.:
\begin{small}
\begin{align}
\label{eq:precedence_constraints}
    S_{j'} \geq C_j \iff S_{j'} - S_j \geq p_j, &\quad \forall(j,j') \in A
\end{align}
\end{small}

We assume that the resource $R_0$ is the only \emph{energy-intensive} resource, with a capacity equal to 1.
We note $\mathcal{J} \subset \mathcal{T}$, the subset of \emph{energy-intensive tasks}, i.e., such that $\forall j \in \mathcal{J}, r_0^j > 0$. We consider this subset to be non-empty. We note $\overline{\mathcal{J}}$ its \emph{complementary}, i.e., $\overline{\mathcal{J}} = \mathcal{T} \setminus \mathcal{J}$.

Until then, the formulation resembles a classical RCPSP problem. Now, let us introduce the variables and constraints that are specific to our problem.
We consider that the resource $R_0$ has a \emph{three-state set} $\Sigma = \{\StateProc, \StateIdle, \StateOff\}$. During each time interval, the machine operates in one of these states. When an energy-intensive task is processed, the machine must be in $\StateProc$ state.
We define a \emph{transition time} function $T: \Sigma \times \Sigma \xrightarrow{} \mathbb{Z}_{>0} \cup \{\infty\}$ and a \emph{transition energy} function  $P: \Sigma \times \Sigma \xrightarrow{} \mathbb{Z}_{\geq 0} \cup \{\infty\}$. We assume that the transition time between two states cannot be null. A transition time of value $\infty$ between two states denotes that this transition is infeasible. 
\cite{benedikt2020power} have demonstrated that it is possible to precompute the optimal transition with respect to the energy cost between any two $\StateProc$ intervals, as well as from the initial $\StateOff$ state to any $\StateProc$ state, and from any $\StateProc$ state to the final $\StateOff$ state; thanks to a graph-based method called \emph{SPACES}. It consists of calculating the optimal transition between two (state, interval) as the shortest path between these states in the transition graph.
An example of SPACES graph can be found in the supplementary materials.
Let us denote by $\Omega = (\Omega_1, \Omega_2, \dots, \Omega_{h})$ the vector that indicates for each time interval the current transition of the machine. For example, $\Omega_t = (\StateOff, \StateProc)$ means that the machine is in transition from the $\StateOff$ state at time $t$ to the $\StateProc$ state at time $t+1$, while $\Omega_{t'} = (\StateIdle, \StateIdle)$ signifies the machine stays in the $\StateIdle$ state between times $t'$ and $t'+1$. An optimal transition from one state in an interval $t$ to another state in $t'$ (assuming $t' > t$) in \emph{SPACES} is therefore a vector containing successive transitions with the lowest transition costs.
In addition, we assume that this machine starts and ends in the $\StateOff$ state. Hence, we can define $\text{StartTime} = \left\{ 1, \dots, T(\StateOff,\StateProc) + 1\right\}$, $\text{EndTime}_j = \left\{ h-T(\StateProc,\StateOff) -p_j + 1, \dots,h \right\}$ the sets during which no energy-intensive tasks can be processed. The sets and parameters are listed in Tables~\ref{tab:sets} and \ref{tab:parameters}, respectively.

A \emph{solution} of our problem is a pair $(\sigma, \Omega)$ where $\sigma = (\sigma_1, \sigma_2, \dots, \sigma_n) \in \mathbb{Z}_{\geq 0}^n$ is the vector denoting the start time of each task and $\Omega = \{\Omega_1,\dots,\Omega_h\} \in (\Sigma \times \Sigma)^h$ the vector of machine-state transitions.
In summary, a solution is \emph{feasible} if the following conditions are met:
\begin{itemize}[itemsep=0pt, parsep=0pt]
    \item the resource capacities are not exceeded;
    \item the precedence constraints of each task are satisfied;
    \item the energy-demanding $R_0$ tasks are processed only when the $R_0$ machine is in $\StateProc$ state;
    \item the machine is in $\StateOff$ state during the first and last intervals;
    \item the transition between states $s$ and $s'$ takes $T(s,s')$ time intervals.
\end{itemize}
The Total Energy Cost (TEC) of a solution $(\sigma, \Omega)$ can be computed as:
\begin{small}
\begin{align}
    TEC = \sum_{t \in \mathcal{I}} c_t \cdot P(\Omega_t)
    \label{eq:energy_obj}
\end{align}
\end{small}
where for all $(s, s') = \Omega_t \in \Sigma^2,\ P(\Omega_t) = P(s,s')$.

\noindent We can compute the makespan $C_{\max}$ of a solution as:
\begin{small}
\begin{align}
    C_{\max} = \max_{j \in \mathcal{T}}\ C_j
    \label{eq:makespan_obj}
\end{align}
\end{small}

\begin{table}[t]
    \centering
    \begin{small}
    \makebox[\textwidth][c]{
    \begin{tabular}{ll}
    \toprule
        {\textbf{Sets}} \\
    \midrule
        $\mathcal{I} = \{1, \dots, h\}$ & Set of unitary time intervals over the horizon $h$ \\
        $\mathcal{R} = \{R_0, R_1, \dots, R_m\}$ & Set of resources, where $R_0$ is the energy-intensive resource \\
        $\mathcal{T} = \{j_1, j_2, \dots, j_n\}$ & Set of non-preemptive tasks to be processed \\
        $\mathcal{J} \subset \mathcal{T}$ & Subset of energy-intensive tasks (i.e., $\forall j \in \mathcal{J}, r_0^j > 0$) \\
        $\overline{\mathcal{J}} = \mathcal{T} \setminus \mathcal{J}$ & Complementary subset of non-energy-intensive tasks \\
        $\Sigma = \{\StateProc, \StateIdle, \StateOff\}$ & Set of states of the energy-intensive machine $R_0$ \\
        $G = (V, A)$ & Directed precedence graph: nodes $V$ are tasks of $\mathcal{T}$, arcs $A$ encode precedences \\
        $\text{StartTime} =$ & Set of intervals during which no energy-intensive task \\
        \quad $\{1, \dots, 1 + T(\StateOff,\StateProc)\}$ & can start (machine warm-up from $\StateOff$ to $\StateProc$) \\
        $\text{EndTime}_j =$ & Set of intervals during which energy-intensive task $j \in \mathcal{J}$ \\
        \quad $\{h - T(\StateProc,\StateOff) - p_j + 1, \dots, h\}$ & cannot be processed (machine shutdown from $\StateProc$ to $\StateOff$) \\
    \bottomrule
    \end{tabular}}
    \end{small}
    \caption{Sets defined in the problem statement.}
    \label{tab:sets}
\end{table}
\begin{table}[t]
    \centering
    \begin{small}
    \begin{tabular}{ll}
    \toprule
        \textbf{Parameters} \\
    \midrule
        $\rho_k$       & Capacity of resource $k \in \mathcal{R}$ \\
        $r_{k}^j$      & Requirement on resource $k \in \mathcal{R}$ of task $j \in \mathcal{T}$ \\
        $p_j$          & Processing time of task $j \in \mathcal{T}$ \\
        $c_t$          & Energy cost at time $t \in \mathcal{I}$ \\
        $c^{job}_{jt}$ & Cost of starting the job $j \in \mathcal{J}$ at interval $t \in \mathcal{I}$\\
        \multirow[c]{2}{*}{$c^*_{lm}$}& Cost of the optimal transition from interval $l \in \mathcal{I}$ to interval $m \in \mathcal{I}$ (\StateProc\,to\,\StateProc), \\
                       & except for intervals $l = 0$ and $m = h$ for which \StateOff\, state is considered instead of \StateProc \\
    \bottomrule
    \end{tabular}
    \end{small}
    \caption{Parameters of the problem statement.}
    \label{tab:parameters}
\end{table}

\begin{table}
    \centering
    \begin{small}
    \begin{tabular}{ll}
    \toprule
        \multicolumn{2}{l}{\textbf{Decision Variables}} \\
    \midrule
        $x_{jt}= 1$  & if the task $j \in \mathcal{T}$ starts at interval $t \in \mathcal{I}$, $0$ otherwise \\
        $z_{lm} = 1$ & if the machine is performing an optimal SPACES transition between the intervals $l$ and $m$ (\StateProc\,to\,\StateProc), \\
                     & except for intervals $l = 0$ and $m = h$ for which \StateOff\, state is considered instead of \StateProc, $0$ otherwise \\
    \bottomrule
    \end{tabular}
    \end{small}
    \caption{Decision variables of the compact ILP.}
    \label{tab:decision_variables}
\end{table}

We aim to minimize the convex combination of both objectives. For that, we introduce a real parameter $\alpha \in [0,1]$ that represents the balance between the two. In order to have better control over the balance between the two objective values, we normalize the individual objectives by their optimal solutions they would have if treated independently, i.e., the optimal makespan ($lb_{RCPSP}$) of the RCPSP problem for \eqref{eq:makespan_obj} and the optimal solution value for TEC ($lb_{TEC}$) when removing the non-energy-intensive tasks from the problem. However, their minimal impact on energy-intensive tasks is preserved, that is, we update their precedence constraints and their computation times so that, if the removed tasks were introduced in an optimal solution, but with an infinite resource capacity on all resources but $R_0$, they would fit between the energy-intensive tasks according to the precedence constraints. In case $lb_{TEC}$ has a negative value (which in extreme cases may exist), we take the absolute value of the objective so that $lb_{TEC}$ is always a strictly positive value (if the optimal cost $TEC$ is zero, we fix $lb_{TEC}$ to an small $\varepsilon>0$).
Note that these can be computed in practice efficiently --- e.g., $lb_{TEC}$ via an ILP of \cite{benedikt2020power} or by a branch and bound~\citep{BENEDIKT2025}, whereas $lb_{RCPSP}$ resembles a classical RCPSP problem that can be solved efficiently, e.g., by constraint programming~\citep{HEINZ2025}.

The objective function can therefore be written as:
\begin{small}
\begin{align}
    \min\ \left\{ \alpha \cdot \frac{1}{lb_{TEC}} \cdot \sum_{t \in \mathcal{I}} c_t P(\Omega_t) + (1-\alpha) \cdot \frac{1}{lb_{RCPSP}} \cdot C_{\max} \right\}
    \label{eq:full_obj}
\end{align}
\end{small}

One can observe that when $\alpha = 0$, the objective function reduces to calculating the makespan, whereas when $\alpha = 1$, it involves finding the best TEC within the given time horizon. Thanks to the normalization, when $\alpha = 0.5$, both objectives weigh the same in the optimal solution.

The problem can be modeled by the following ILP. Since this ILP models the entirety of our problem, we refer to it as the \emph{monolithic} ILP.
Let $x_{jt}$ be a binary variable equal to 1 if the task $j \in \mathcal{T}$ starts at interval $t \in \mathcal{I}$, and $z_{lm}$ be a binary variable indicating that the machine is performing an optimal transition between two processing states from time interval $l$ to time interval $m$, except for intervals $l = 0$ and $m = h$ for which \StateOff state is considered instead of \StateProc.

\begin{small}
\begin{align}
    \min\ &\frac{\alpha}{lb_{TEC}} \cdot  \left( \sum_{j\in\mathcal{J}} \sum_{t = 1}^{h} c_{jt}^{job} x_{jt} + \sum_{l=1}^h\sum_{m=1}^{h+1}c^*_{lm}z_{lm}\right)+ \frac{(1-\alpha)}{lb_{RCPSP}} \cdot \left( \sum_{t=1}^h tx_{n+1,t} + 1\right) \label{eq:ilp_obj} \\
    s.t.\ &\sum_{t\in\mathcal{I}}x_{jt} = 1 &\quad \forall j \in \mathcal{T} \label{eq:ilp_all_jobs_done}\\
    &\sum_{j \in \mathcal{T}}\sum_{t' = \max\{1, t-p_j+1\}}^tr^j_k x_{jt'} \leq \rho_k &\quad \forall k \in \mathcal{R}, t \in \mathcal{I}\label{eq:ilp_resource_capacities}\\
    &\sum_{t \in \mathcal{I}}t x_{j't} - \sum_{t \in \mathcal{I}}t x_{jt} \geq p_j &\quad \forall(j,j') \in A \label{eq:ilp_precedences_1}\\
    &\sum_{t \in \mathcal{I}}t x_{(n+1)t} - \sum_{t \in \mathcal{I}}t x_{jt} \geq p_j - 1 &\quad \forall j \in \mathcal{T}\label{eq:ilp_precedences_2} \\
    &\sum_{j\in\mathcal{J}}\sum_{t \in \text{StartTime} \cup \text{EndTime}_j} x_{jt} = 0 & \label{eq:ilp_start_end_time} \\
    &\sum_{j \in \mathcal{J}} \sum_{t' = \max\{1, t - p_j + 1\}}^ix_{jt'} + \sum_{l=1}^t \sum_{m=i+1}^{h+1} z_{lm} = 1 &\quad \forall t \in \mathcal{I} \label{eq:ilp_machine_states}
\end{align}
\end{small}
where $c^{job}_{jt} = \sum_{k = t}^{t+p_j-1} c_k \cdot P(\StateProc,\StateProc)$ is the cost of starting the job $j \in \mathcal{J}$ at interval $t \in \mathcal{I}$, and $c^*_{lm}$ is the cost of the optimal transition from interval $l$ to interval $m$ (see \cite{benedikt2020power} for more details). 
As in the classical RCPSP, we introduce a dummy task $n+1$ as the last activity, representing the project's makespan.

Constraints~\eqref{eq:ilp_all_jobs_done} force all tasks to be performed once. Constraints~\eqref{eq:ilp_resource_capacities} ensure that no resource capacity is exceeded.
Constraints~\eqref{eq:ilp_precedences_1} enforce the precedence relations~\eqref{eq:precedence_constraints} between tasks, while constraints~\eqref{eq:ilp_precedences_2} calculate the makespan.
Constraints~\eqref{eq:ilp_start_end_time} prevent energy-intensive tasks from being processed at an infeasible moment regarding machine transition.
Finally, constraints~\eqref{eq:ilp_machine_states} signify that at each time interval, the machine is either processing a task (that is in \StateProc, indicating by the fact that the processing of the task started in the $\max\{1, i - p_j + 1\}$ previous intervals) or undergoing an optimal transition.

\paragraph{Example}

\begin{figure}[t]
\centering

\begin{subfigure}[b]{0.48\textwidth}
  \centering
  \begin{tikzpicture}
    \graph {
      "$j_1$" -> { 
        "$j_7$"[blue] -> "$j_4$",
        "$j_2$"[blue] -> {
          "$j_3$" -> "$j_4$",
          "$j_5$"
        },
      } -> "$j_6$"[blue] -> "$j_8$",
      "$j_7$"[blue] ->[bend left] "$j_6$";
    };
  \end{tikzpicture}
  \caption{Task precedences (energy-intensive tasks in blue).}
  \label{fig:example-requirements}
\end{subfigure}
\hfill
\begin{subfigure}[b]{0.48\textwidth}
  \centering
    \begin{tabular}{c|cccccccc}
      \toprule
      Params & $j_1$ & $j_2$ & $j_3$ & $j_4$ & $j_5$ & $j_6$ & $j_7$ & $j_8$ \\
      \midrule
      $p_j$ & 2 & 1 & 2 & 1 & 3 & 2 & 2 & 2 \\
      $r^j_0$ & 0 & 1 & 0 & 0 & 0 & 1 & 1 & 0 \\
      $r^j_1$ & 5 & 0 & 2 & 3 & 2 & 0 & 0 & 3 \\
      $r^j_2$ & 2 & 0 & 1 & 1 & 2 & 0 & 0 & 2 \\
      \bottomrule
    \end{tabular}%
  \caption{Task parameters.}
  \label{tab:example-table}
\end{subfigure}

\caption{Parameters for the example problem instance.}
\label{fig:example-data}
\end{figure}

\begin{figure}[ht]
    \centering
    \resizebox{0.85\textwidth}{!}{
    \begin{tikzpicture}
    \definecolor{mycolor}{HTML}{D4E7F6}

    \draw[-] (0,0) -- (16,0) node[right] {$\mathcal{I}$};
    \draw[->] (0,0) -- (0,2.75) node[above] {$R_1$};
    
    \draw[dashed] (-0.1,2.5) node[left] {$R_1$ capacity 5} -- (16,2.5) ;

    \draw[fill=mycolor] (0,0) rectangle (2,2.5) node[midway] {\Large $j_1$};
    \draw[fill=mycolor] (4,1) rectangle (6,2) node[midway] {\Large $j_3$};
    \draw[fill=mycolor] (6,1) rectangle (7,2.5) node[midway] {\Large $j_4$};
    \draw[fill=mycolor] (4,0) rectangle (7,1) node[midway] {\Large $j_5$};
    \draw[fill=mycolor] (10,0) rectangle (12,1.5) node[midway] {\Large $j_8$};

    \foreach \x in {1,2,...,16}
        \draw (\x,0.1) -- (\x,-0.1) node[below] at (\x-0.5, -0.2) {\x};
    
    \foreach \y/\ylab in {0.5/1,1/2,1.5/3,2/4}
        \draw (0.1,\y) -- (-0.1,\y) node[left] {\ylab};
    \draw (0.1,2.5) -- (-0.1,2.5);

    \draw[dashed] (12,0) -- (12,2.75) node[above] {$C_{\text{max}}$};

\end{tikzpicture}

}

\resizebox{0.85\textwidth}{!}{
    \begin{tikzpicture}
    \definecolor{mycolor}{HTML}{D4E7F6}

    \draw[-] (0,0) -- (16,0) node[right] {$\mathcal{I}$};
    \draw[->] (0,0) -- (0,1.75) node[above] {$R_2$};
    
    \draw[dashed] (-0.1,1.5) node[left] {$R_2$ capacity 3} -- (16,1.5) ;

    \draw[fill=mycolor] (0,0) rectangle (2,1) node[midway] {\Large $j_1$};
    \draw[fill=mycolor] (4,1) rectangle (6,1.5) node[midway] {\Large $j_3$};
    \draw[fill=mycolor] (6,1) rectangle (7,1.5) node[midway] {\Large $j_4$};
    \draw[fill=mycolor] (4,0) rectangle (7,1) node[midway] {\Large $j_5$};
    \draw[fill=mycolor] (10,0) rectangle (12,1) node[midway] {\Large $j_8$};

    \foreach \x in {1,2,...,16}
        \draw (\x,0.1) -- (\x,-0.1) node[below] at (\x-0.5, -0.2) {\x};
    
    \foreach \y/\ylab in {0.5/1,1/2}
        \draw (0.1,\y) -- (-0.1,\y) node[left] {\ylab};
    \draw (0.1,1.5) -- (-0.1,1.5);

    \draw[dashed] (12,0) -- (12,1.75) node[above] {$C_{\text{max}}$};

\end{tikzpicture}

}
\resizebox{0.85\textwidth}{!}{
    \begin{tikzpicture}
    \definecolor{mycolor}{HTML}{D4E7F6}

    \draw[-] (0,0) -- (16,0) node[right] {$\mathcal{I}$};
    \draw[->] (0,0) -- (0,6.5) node[above] {Energy cost};

    \draw[fill=mycolor] (0,3) rectangle (1,3 + 2/5) node[below] {};
    \draw[fill=mycolor] (1,3) rectangle (2,3 + 1/5) node[below] {};
    \draw[fill=mycolor] (2,3) rectangle (3,3 + 2/5) node[below] {};
    \draw[fill=mycolor] (3,3) rectangle (4,3 + 1/5) node[below] {};
    \draw[fill=mycolor] (4,3) rectangle (5,3 + 6/5) node[below] {};
    \draw[fill=mycolor] (5,3) rectangle (6,3 + 16/5) node[below] {};
    \draw[fill=mycolor] (6,3) rectangle (7,3 + 14/5) node[below] {};
    \draw[fill=mycolor] (7,3) rectangle (8,3 + 3/5) node[below] {};
    \draw[fill=mycolor] (8,3) rectangle (9,3 + 2/5) node[below] {};
    \draw[fill=mycolor] (9,3) rectangle (10,3 + 5/5) node[below] {};
    \draw[fill=mycolor] (10,3) rectangle (11,3 + 3/5) node[below] {};
    \draw[fill=mycolor] (11,3) rectangle (12,3 + 15/5) node[below] {};
    \draw[fill=mycolor] (12,3) rectangle (13,3 + 3/5) node[below] {};
    \draw[fill=mycolor] (13,3) rectangle (14,3 + 2/5) node[below] {};
    \draw[fill=mycolor] (14,3) rectangle (15,3 + 1/5) node[below] {};
    \draw[fill=mycolor] (15,3) rectangle (16,3 + 2/5) node[below] {};

    \draw[fill=black!30] (0,2) rectangle (1,3) node[midway] {2};
    \draw[fill=black!30] (1,2) rectangle (2,3) node[midway] {1};
    \draw[fill=black!30] (2,2) rectangle (3,3) node[midway] {2};
    \draw[fill=black!30] (3,2) rectangle (4,3) node[midway] {1};
    \draw[fill=black!30] (4,2) rectangle (5,3) node[midway] {6};
    \draw[fill=black!30] (5,2) rectangle (6,3) node[midway] {16};
    \draw[fill=black!30] (6,2) rectangle (7,3) node[midway] {14};
    \draw[fill=black!30] (7,2) rectangle (8,3) node[midway] {3};
    \draw[fill=black!30] (8,2) rectangle (9,3) node[midway] {2};
    \draw[fill=black!30] (9,2) rectangle (10,3) node[midway] {5};
    \draw[fill=black!30] (10,2) rectangle (11,3) node[midway] {3};
    \draw[fill=black!30] (11,2) rectangle (12,3) node[midway] {15};
    \draw[fill=black!30] (12,2) rectangle (13,3) node[midway] {3};
    \draw[fill=black!30] (13,2) rectangle (14,3) node[midway] {2};
    \draw[fill=black!30] (14,2) rectangle (15,3) node[midway] {1};
    \draw[fill=black!30] (15,2) rectangle (16,3) node[midway] {2};

    \draw[fill=black!30] (0,0) rectangle (1,1) node[midway] {0};
    \draw[fill=black!30] (1,0) rectangle (2,1) node[midway] {5};
    \draw[fill=black!30] (2,0) rectangle (3,1) node[midway] {10};
    \draw[fill=black!30] (3,0) rectangle (4,1) node[midway] {4};
    \draw[fill=black!30] (4,0) rectangle (5,1) node[midway] {24};
    \draw[fill=black!30] (5,0) rectangle (6,1) node[midway] {64};
    \draw[fill=black!30] (6,0) rectangle (7,1) node[midway] {28};
    \draw[fill=black!30] (7,0) rectangle (8,1) node[midway] {6};
    \draw[fill=black!30] (8,0) rectangle (9,1) node[midway] {8};
    \draw[fill=black!30] (9,0) rectangle (10,1) node[midway] {20};
    \draw[fill=black!30] (10,0) rectangle (11,1) node[midway] {3};
    \draw[fill=black!30] (11,0) rectangle (12,1) node[midway] {0};
    \draw[fill=black!30] (12,0) rectangle (13,1) node[midway] {0};
    \draw[fill=black!30] (13,0) rectangle (14,1) node[midway] {0};
    \draw[fill=black!30] (14,0) rectangle (15,1) node[midway] {0};
    \draw[fill=black!30] (15,0) rectangle (16,1) node[midway] {0};

    \draw[fill=blue!15] (0,1) rectangle (1,2) node[midway] { $\StateOff$};
    \draw[fill=blue!70] (1,1) rectangle (3,2) node[midway] { $\nearrow$};
    \draw[fill=mycolor] (3,1) rectangle (4,2) node[midway] {\Large $j_2$};
    \draw[fill=mycolor] (4,1) rectangle (6,2) node[midway] {\Large $j_7$};

    \draw[fill=blue!50] (6,1) rectangle (7,2) node[midway] { $\searrow$};
    \draw[fill=blue!70] (7,1) rectangle (8,2) node[midway] { $\nearrow$};

    \draw[fill=mycolor] (8,1) rectangle (10,2) node[midway] {\Large $j_6$};
    
    \draw[fill=blue!50] (10,1) rectangle (11,2) node[midway] { $\searrow$};
    \draw[fill=blue!15] (11,1) rectangle (16,2) node[midway] { $\StateOff$};

    \foreach \x in {1,2,...,16}
        \draw (\x,0.1) -- (\x,-0.1) node[below] at (\x-0.5, -0.2) {\x};

    \draw (0,0.0) -- (0,-0.0) node[left] at (-0.1, 1.5) {$R_0$ state};
    \draw (0,0.0) -- (0,-0.0) node[left, align=center] at (-0.1, 0.5) {TEC};
    \draw (0,0.0) -- (0,-0.0) node[left, align=center] at (-0.1, 2.5) {Energy cost $c_i$};
    
    \foreach \y in {0,2,..., 16}
        \draw (0.1, 3 + \y / 5) -- (-0.1, 3 + \y / 5) node[left] {\y};
    \draw[dashed] (12,0) -- (12,6.5) node[above] {$C_{\text{max}}$};

\end{tikzpicture}
}
    \caption{Example project schedule with the optimal transition for \(R_0\). Parameter \(\alpha=0.75\) (more towards TEC): $C_{\max}$ $=12$, TEC $=172$.}
    \label{fig:rcpsp-energy-sched1}
\end{figure}

Consider an instance with three resources $\{R_0, R_1, R_2\}$ and eight tasks $\mathcal{T} = \{j_1,\dots,j_8\}$ with a subset $\mathcal{J} = \{j_2, j_6, j_7\}$ of energy-intensive tasks. The precedence constraints are shown in Figure~\ref{fig:example-requirements}, the processing times and the resource capacities of each task are detailed in Figure~\ref{tab:example-table}.
We assume a transition diagram of the resource $R_0$ as the one shown in Figure~\ref{fig:example-func-power-time}.
The optimal solution for the instance considered for $\alpha = 0.75$ is depicted in Figure~\ref{fig:rcpsp-energy-sched1}.

\paragraph{Model motivation}
The motivation for the scheduling model proposed in this work stems from manufacturing processes with energy-intensive production steps.
For example, in glass production~\citep{modos2016robust}, the process is divided into two main phases: (i)~preprocesing, and (ii)~tempering.
In the preprocessing steps, the glass panels are cut and drilled, while in the tempering stage, the preprocessed glass panels are heated in a furnace up to 620 $^\circ$C and then cooled down \citep{modos2016robust}.
After that, other parts of the production process follow, such as polishing, quality control, and expedition.
The furnace is modeled as a unit-capacity resource.
An operation on this resource can represent a batch of multiple glass panels that are collected into a single operation via precedence constraints.
The products in the furnace undergo a common thermal cycle — no products can enter or leave until the current batch is completed.
The glass furnace is an expensive device; therefore, only a single one is available.
Also, we assume that the considered problem represents an operational-level scheduling with much information already available, i.e., TOU prices are fully known for the upcoming near-term horizon~\citep{bokde2021optimal}.

\section{Constraint programming model}
\label{sec:constraint_programming}

In this section, we propose a monolithic CP approach to solve the problem defined in the previous section. In particular, we will take advantage of the fact that in the CP, some variables may or may not be present in the final solution, depending on the value of other variables. These variables are called \textit{``optional''} variables. Moreover, we remind the reader that in CP, an interval variable may have either a \textit{fixed} length or be in a range of valid lengths. We represent the optimal transitions between the energy-intensive tasks with variables of a fixed length that will be allowed to move around the schedule as necessary. This trick allows us to reduce the number of variables needed to represent the optimal transitions, as we can precisely compute the maximum number of optimal transitions of each length that can be used in the schedule. In the remainder of the article, we will use the CP notations presented in \cite{HEINZ2022108586}.

\paragraph{Variables} We denote by $\xi_j$ the interval variable of a length $p_j$ associated with each task $j \in \mathcal{T}$. This variable corresponds to the processing of a task $j$, starting at time $\textsc{StartOf}(\xi_j)$ and ending at time $\textsc{EndOf}(\xi_j)$. By definition, each interval variable must be scheduled. We introduce optional interval variables $\zeta_{lk}$ to represent the optimal transition in the SPACES surrounding task processing on resource $R_0$. Let $l_{\max} = h - \sum_{j \in \mathcal{J}}p_j$ be the upper bound on the optimal transition length. The function $\mathrm{K}: l \mapsto  \left\lfloor \frac{h - \sum_{j \in \mathcal{J}}p_j}{l} \right\rfloor$ then gives the upper bound on the number of optimal transitions of length $l$ that can appear in the schedule. Therefore, the optional variables $\zeta_{lk}$ are created such that:
\begin{small}
\begin{align}
    \textsc{LengthOf}(\zeta_{lk}) = l &\quad \forall l \in \{1,\dots,l_{\max}\}, k\in\{1,\dots,\mathrm{K}(l)\}
\end{align}
\end{small}
{Figure~\ref{fig:spaces_variables} displays interval variables of the optimal solution of an instance comprising three energy-intensive tasks. Notice that the length of an absent interval always evaluates to 0.

\begin{figure}[t]
        \centering
   \begin{tikzpicture}
  \begin{groupplot}[
    group style = {group size=1 by 1, x descriptions at=edge bottom, vertical sep=0.1cm},
    xmin=-0.5, xmax=16.5,      
    ymin=0, ymax=1,
    xtick={0,...,16},          
    yticklabel=\empty,         
    tick label style={font=\small},
    xlabel={intervals $\mathcal{I}$ [-]},
    ylabel={$R_0$},
    ylabel style={rotate=-90}
  ]
    \nextgroupplot[
      height=0.15\textwidth,
      width=0.9\textwidth,
    ]
      \draw[thick,fill=blue!25]
        (axis cs:0,0.08) rectangle (axis cs:3,0.7);
      \draw[thick,fill=green!25]
        (axis cs:3,0.08) rectangle (axis cs:4,0.7);
      \draw[thick,fill=green!25]
        (axis cs:4,0.08) rectangle (axis cs:6,0.7);
      \draw[thick,fill=blue!25]
        (axis cs:6,0.08) rectangle (axis cs:8,0.7);
      \draw[thick,fill=green!25]
        (axis cs:8,0.08) rectangle (axis cs:10,0.7);
      \draw[thick,fill=blue!25]
        (axis cs:10,0.08) rectangle (axis cs:16,0.7);

      \node at (axis cs:1.5,0.39) {$\zeta_{3,1}$};
      \node at (axis cs:3.5,0.39) {$\xi_2$};
      \node at (axis cs:5,0.39)   {$\xi_7$};
      \node at (axis cs:7,0.39)   {$\zeta_{2,1}$};
      \node at (axis cs:9,0.39)   {$\xi_6$};
      \node at (axis cs:13,0.39)  {$\zeta_{6,1}$};
  \end{groupplot}
\end{tikzpicture}
\caption{Example of interval variables present on resource $R_0$ in a valid schedule.}
\label{fig:spaces_variables}
\end{figure}

\paragraph{Objective} We use a vector $c^{job}_j$ to express the TEC for the processing of task $j$ based on the interval $i$ in which it starts. Additionally, since the variables representing the optimal switching are not fixed in their position, they are not directly associated with the total energy cost of the switching. Hence, we define the vector $c^{space}_l$, the i\textsuperscript{th} element of the vector corresponds to the total energy cost of the optimal length switching $l$, if started in the interval $i$. The values of $c^{space}_l$ are given as follows:
\begin{small}
\begin{align*}
    c^{space}_{l}[i] = c^*(i,i+l) &\quad \forall i \in \{1,\dots,h-l+1\}
\end{align*}
\end{small}

The model minimizes the following objective:
\begin{small}
\begin{equation}
\begin{gathered}
    \frac{\alpha}{lb_{TEC}} \cdot \left( \sum_{l=1}^{l_{\max}}\sum_{k=1}^{\mathrm{K}(l)}\textsc{Element}\left(c^{space}_l, \textsc{StartOf}(\zeta_{lk})\right) + \sum_{j\in\mathcal{J}}\textsc{Element}\left(c^{job}_j,\textsc{StartOf}(\xi_j)\right)\right) \\
    + \frac{1-\alpha}{lb_{RCPSP}} \cdot \max_{i \in \mathcal{T}}\left\{\textsc{EndOf}(\xi_i)\right\}
\end{gathered}
\end{equation}
\end{small}
with the first part representing the TEC part of the objective and the second part representing the $C_{\max}$ part of the objective. Note that when computing the TEC, we consider only tasks requiring energy consumption in the set $\mathcal{J}$, while for the objective $C_{\max}$, we consider all the tasks.

\paragraph{Constraints} A solution is feasible if it respects the following constraints:
\begin{small}
\begin{align}
\textsc{StartOf}(\zeta_{lk}) \geq 0 \quad \forall l \in \{1, \ldots, l_{\max}\} &\quad \forall k \in \{1, \ldots, \mathrm{K}(l)\} \label{eq:freecp-startof} \\
\textsc{EndOf}(\zeta_{lk}) \leq h \quad \forall l \in \{1, \ldots, l_{\max}\} &\quad \forall k \in \{1, \ldots, \mathrm{K}(l)\} \label{eq:freecp-endof} \\
\textsc{PresenceOf}(\zeta_{lk}) \geq \textsc{PresenceOf}(\zeta_{lk+1}) &\quad \forall l \in \{1, \ldots, l_{\max}\}, k \in \{1, \ldots, \mathrm{K}(l) - 1\} \label{eq:freecp-precence-ordering-1} \\
\textsc{EndBeforeStart}(\zeta_{lk}, \zeta_{lk+1}) &\quad \forall l \in \{1, \ldots, l_{\max}\}, k \in \{1, \ldots, \mathrm{K}(l) - 1\} \label{eq:freecp-precence-ordering-2}
\end{align}
\end{small}

Constraints~\eqref{eq:freecp-startof} and \eqref{eq:freecp-endof} ensure that all optimal transition variables, if present, are scheduled within the scheduling horizon. Constraints~\eqref{eq:freecp-precence-ordering-1} and \eqref{eq:freecp-precence-ordering-2} give an order to the variables that represent the optimal transitions and ensure that the variables are used in an order given by their indexes for transitions of the same length. This removes redundant solutions that differ only in the order of variable use.

\begin{small}
\begin{align}
\textsc{StartOf}(\xi_j) \geq 1 + T(\StateOff, \StateProc) &\quad \forall j \in \mathcal{J} \label{eq:freecp-minimum-start} \\
\textsc{EndOf}(\xi_j) \leq h - T(\StateProc, \StateOff) - 1 &\quad \forall j \in \mathcal{J} \label{eq:freecp-maximum-end} \\
\textsc{EndOf}(\xi_j) \leq h &\quad \forall j \in \mathcal{T} \label{eq:freecp-horizon} \\
\textsc{NoOverlap}\left(\{\xi_j: \forall j \in \mathcal{J}\}\cup\{\zeta_{lk}: \forall l \in \{1,\dots,l_{\max}\}, k\in \{1,\dots,\mathrm{K}(l)\}\}\right) &\label{eq:freecp-nooverlap}
\end{align}
\end{small}

Constraints~\eqref{eq:freecp-minimum-start} and \eqref{eq:freecp-maximum-end} ensure that all energy-intensive tasks are scheduled so that energy-intensive machines have time to warm up and shut down, respectively.
Constraints~\eqref{eq:freecp-horizon} ensure that all tasks are scheduled within the scheduling horizon. Compared to the constraints~\eqref{eq:freecp-minimum-start} and \eqref{eq:freecp-maximum-end}, we consider all of the tasks in this case. \textsc{NoOverlap} constraints~\eqref{eq:freecp-nooverlap} guarantee that the resource $R_0$ never processes a task and undergoes an optimal transition at the same time.
\begin{small}
\begin{align}
\sum_{l=1}^{l_{\max}}\sum_{k=1}^{\mathrm{K}(l)}\textsc{LengthOf}(\zeta_{lk}) = l_{\max} & \label{eq:freecp-length-lmax}\\
\sum_{j \in \mathcal{T}} \textsc{Pulse}(\xi_j,r^j_\kappa)\leq \rho_\kappa &\quad \forall \kappa \in \mathcal{R} \label{eq:freecp-resource-capacity}\\
\textsc{EndBeforeStart}(\xi_i,\xi_j) &\quad \forall (i,j) \in A \label{eq:freecp-precedence-relations}
\end{align}
\end{small}

Constraint~\eqref{eq:freecp-length-lmax} ensures that the length of all optimal transitions used in the solution is exactly equal to the value of $l_{\max}$; optional variables that are not present in the solution do not count towards this sum. By the definition of $l_{\max}$, these constraints, together with constraints~\eqref{eq:freecp-nooverlap}, also ensure that for the resource $R_0$, the whole scheduling horizon is filled by the processing of tasks or by optimal transitions in the SPACES around. Constraints~\eqref{eq:freecp-resource-capacity} guarantee that resource capacities are not exceeded in any time interval.
Finally, with $A$ being the set of arcs of the precedence graph, Constraints~\eqref{eq:freecp-precedence-relations} rule out any solutions in which the precedence relations are violated.

\section{Logic-Based Benders Decomposition}
\label{section:LBBD}

This section introduces an LBBD approach for the problem considered, starting from its simplified version when $\alpha = 1$, to a general version for any $\alpha < 1$.
The idea behind this approach is to separate the variables and constraints associated with the RCPSP from those related to the machine states problem. For that, first, we will not consider the makespan in the objective. Therefore, the RCPSP part can be seen as a feasibility problem inside the TEC with TOU tariffs and machine states on a single machine. We then divide the problem into two dependent problems. The master one will be the machine state problem, solved by the ILP-SPACES model, while the subproblem will be an RCPSP feasibility problem modeled by a CP formulation. Of course, this would only work if $\alpha = 1$. Therefore, we first present the special case $\alpha = 1$ in Section~\ref{subsection:alphaeq1}. Then, in Section~\ref{subsection:alphaleq1}, we will investigate how to generalize this approach to the case $\alpha < 1$. Note that the case $\alpha = 0$ is not interesting and therefore can be ignored. Indeed, as long as the constraints~\eqref{eq:ilp_start_end_time} are satisfied, the machine state problem will not generate further infeasibility, since the machine can remain in \StateProc~state for the duration of the schedule. That means that such a problem can be reduced to a classical RCPSP with a single time window for energy-intensive tasks whose size should be nearly equal to the time horizon.
The layout of our LBBD approach is detailed in Algorithm~\ref{algo:LBBD}.

\begin{algorithm}[t]
\caption{Logic-Based Benders Decomposition (LBBD)}
\begin{algorithmic}[1]
\While{$Master$ is not optimal}
    \State $\m{\hat{x}} \gets Master()$ \Comment{Get integer solution.}
    \State $\m{\hat{\zeta}} \gets Subproblem(\m{\hat{x}})$    \Comment{Solve the subproblem.}
    \If{$\m{\hat{\zeta}}$ optimal or feasible}
        \State Add optimality cut \eqref{eq:optimality_cuts} to $Master$ \Comment{Give feedback to the $Master$}
    \Else
        \State Compute $\mathcal{I}nf$ \Comment{Retrieve tasks of $\mathcal{J}$ that lead to infeasibility.}
        \State Add feasibility cut \eqref{eq:lazy-constraint} to $Master$ \Comment{Cut all solutions sharing same $\mathcal{I}nf$ at once.}
    \EndIf
\EndWhile
\end{algorithmic}
\label{algo:LBBD}
\end{algorithm}

\subsection{RCPSP seen as a feasibility problem: Case $\alpha = 1$}
\label{subsection:alphaeq1}

In this section, we assume that $\alpha = 1$, i.e., that only the total energy part of the objective function~\eqref{eq:full_obj} matters. Therefore, the RCPSP can be seen as a feasibility problem in which one must verify that all of the non-energy-intensive tasks can be scheduled in parallel to the energy-intensive ones, while not exceeding the time horizon.

\subsubsection{Master problem}

The master problem consists of optimizing the machine state problem alone, i.e., finding a scheduling of the energy-intensive tasks that minimizes the energy costs. Therefore, we keep in mind all the constraints of the ILP related to energy-intensive tasks. The decision variables presented in Table~\ref{tab:decision_variables}, are similar to those of the ILP, except that only the energy-intensive tasks are represented by the variables $x_{jt}$, i.e., $\mathcal{X} = \{x_{jt}\ |\ j\in\mathcal{J}, t\in \mathcal{I}\}$. All of the decision variables are binary. Hence, constraints~\eqref{eq:ilp_all_jobs_done}, \eqref{eq:ilp_precedences_1}, \eqref{eq:ilp_start_end_time}, \eqref{eq:ilp_machine_states} and \eqref{eq:ilp_resource_capacities} (the latter limited to resource $R_0$) remain in the master. Constraints~\eqref{eq:ilp_precedences_2} are discarded as the makespan is no longer considered.

We denote the relation `$\prec$' for any tuple $u,v \in \mathcal{T}$ such that $u \prec v$ implies that $u$ is a direct predecessor of $v$.
We denote $Pred(j) \subset \mathcal{T}$ and $Succ(j) \subset \mathcal{T}$, the sets of predecessors and successors of a task $j \in \mathcal{T}$, respectively.
The precedence constraints~\eqref{eq:ilp_precedences_1} cannot be used directly; indeed, since the binary variables $x_{ji}$ represent only energy-intensive tasks, if we consider $u, v \in \mathcal{J}$ and $j \in \overline{\mathcal{J}}$, then precedence constraints such as $u \prec j \prec v$ would not be taken into account.
Furthermore, we know that in this case $S_v \geq C_u + p_j$.
Thus, for all $(u, v) \in \mathcal{J}^2$, if there exists a path $L_{uv} = \{ j_1, \dots, j_l \}$ such that $u \prec j_1 \prec \dots \prec j_l \prec v$, then:
\begin{small}
\begin{align}
\label{eq:minimal_starting_date}
S_v \geq C_u + \sum_{j \in L_{uv}} p_j
\end{align}
\end{small}

Let $\mathcal{L}_{uv}$ denote the set of all possible precedence paths from task $u$ to task $v$, i.e., if there exist $\mathrm{L}$ possible precedence paths between $u$ and $v$, then $\mathcal{L}_{uv} = \{ L^l_{uv} \}_{l \in \{1, \dots, \lambda\}}$. This set is finite because there is a finite number of tasks. We say that if $\mathcal{L}_{uv} \not = \emptyset$, then $u$ is an ancestor of $v$, and $v$ is a descendant of $u$.
We name $L^*_{uv}$ the longest path in $\mathcal{L}_{uv}$.
Then, we can state that:
\begin{proposition}
\label{prop:minimal_starting_time}
    For any $(u, v) \in \mathcal{J}^2$, if $\mathcal{L}_{uv} \neq \emptyset$, then:
    \begin{small}
    \begin{equation}
        S_v \geq C_u + \sum_{j \in L^*_{uv}} p_j
    \end{equation}
    \end{small}
\end{proposition}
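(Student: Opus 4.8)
The plan is to reduce \cref{prop:minimal_starting_time} to the single‑path inequality \eqref{eq:minimal_starting_date} and then optimise over the finite family $\mathcal{L}_{uv}$.

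First I would fix an arbitrary precedence path $L = \{j_1, \dots, j_l\} \in \mathcal{L}_{uv}$, i.e.\ a chain $u \prec j_1 \prec \dots \prec j_l \prec v$, and re‑derive \eqref{eq:minimal_starting_date} for it by a short induction along the chain. Every consecutive pair in the chain is an arc of $G$, so the associated precedence constraint $S_{j'} \geq C_j = S_j + p_j$ (the arc semantics enforced by \eqref{eq:ilp_precedences_1}) applies. From $S_{j_1} \geq C_u$ one gets $C_{j_1} = S_{j_1} + p_{j_1} \geq C_u + p_{j_1}$; the inductive step $C_{j_{k+1}} = S_{j_{k+1}} + p_{j_{k+1}} \geq C_{j_k} + p_{j_{k+1}}$ then yields, after $l$ steps, $S_v \geq C_{j_l} \geq C_u + \sum_{i=1}^{l} p_{j_i} = C_u + \sum_{j \in L} p_j$. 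If $\mathcal{L}_{uv}$ contains the empty path (i.e.\ $u \prec v$ directly), this degenerates to $S_v \geq C_u$, which is just the arc constraint and is dominated by any non‑empty path.

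Second, observe that the bound $S_v \geq C_u + \sum_{j \in L} p_j$ holds simultaneously for every $L \in \mathcal{L}_{uv}$. Since $\mathcal{L}_{uv}$ is finite and, by hypothesis, non‑empty, the maximum $\max_{L \in \mathcal{L}_{uv}} \sum_{j \in L} p_j$ is attained, and the attaining path is precisely the longest path $L^*_{uv}$ (``longest'' understood as largest total processing time of its tasks, equivalently the path giving the tightest bound). Taking that maximum on the right‑hand side gives $S_v \geq C_u + \sum_{j \in L^*_{uv}} p_j$, which is the claim.

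I do not anticipate a real obstacle: the statement is essentially the transitive closure of the precedence constraints. The only points worth spelling out are the meaning of ``longest path'' and the fact that the maximum over $\mathcal{L}_{uv}$ is well defined — both already supplied by the surrounding text — together with, if desired, the remark that $L^*_{uv}$ is obtained by a standard longest‑path computation in the DAG $G$ with vertex weights $p_j$, which is what makes these strengthened precedence inequalities cheap to generate for the master ILP.
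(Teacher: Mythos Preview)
Your proof is correct and follows essentially the same approach as the paper: both apply the single-path inequality \eqref{eq:minimal_starting_date} to every path in $\mathcal{L}_{uv}$ and then select the tightest (maximal) bound, which by definition is given by $L^*_{uv}$. Your version is slightly more explicit in re-deriving \eqref{eq:minimal_starting_date} by induction along the chain, whereas the paper simply invokes it and presents the maximisation step via an explicit ordering of the paths.
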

\begin{proof}
    Consider a pair $(u,v) \in \mathcal{J}^2$ such that $\mathcal{L}_{uv} \neq \emptyset$. Then, there exist paths $L_{uv}^1, \dots, L^\lambda_{uv}$ in $\mathcal{L}_{uv}$. Without loss of generality, we can reorder these paths so that:
    \begin{small}
    $$\sum_{j \in L^1_{uv}} p_j \leq \sum_{j \in L^2_{uv}} p_j \leq \dots \leq \sum_{j \in L^\lambda_{uv}} p_j$$
    \end{small}
    Then, according to \eqref{eq:minimal_starting_date}, we have:
    \begin{small}
    $$ C_u + \sum_{j \in L^1_{uv}} p_j \leq \dots \leq C_u + \sum_{j \in L^\lambda_{uv}} p_j \leq S_v $$
    \end{small}
    As $L^*_{uv} = L^\lambda_{uv}$ holds by construction, the statement follows.
\end{proof}

We introduce the function $\MD: u \times v \in \mathcal{J}^2 \xrightarrow{} \mathbb{Z}^+ \cup \{\infty\}$ such that $\MD(u, v)$ is the minimal processing time between tasks $u$ and $v$ if $v$ is a descendant of $u$, otherwise $\infty$, i.e.,
\begin{equation}
\label{eq:md_definition}
\forall u,v \in \mathcal{J},\ \MD(u,v) = \left\{ \begin{array}{ll} p_u + \sum_{j \in L^*_{uv}} p_j & \text{if}\ \mathcal{L}_{uv} \neq \emptyset \\ \infty & \text{otherwise} \end{array} \right.
\end{equation}

The domain of $\MD$ can be extended to non-energy-intensive tasks $\overline{\mathcal{J}}$ , which is useful for handling cases where all predecessors — or all successors — of an energy-intensive task $j$ are themselves non-energy-intensive.
Computing this function is equivalent to calculating the longest paths between all nodes of the precedence graph associated with the RCPSP. Since there is no cyclic relation (i.e., $\nexists j \in \mathcal{T},\ j \prec \dots \prec j$) in the RCPSP, the precedence graph is a directed acyclic graph. Hence, the longest paths can be found by computing the shortest paths in the precedence graph in which each arc leaving a node $j$ has a weight equal to $-p_j$. Furthermore, since the precedence graph is usually sparse, we use Johnson's all-pairs shortest path algorithm \citep{johnson1977efficient}, which has a complexity of $O(|V||A| \log |V|)$ in a sparse graph $G(V,A)$. We say that a task $j$ is the \emph{predecessor} (resp. \emph{successor}) of a task $i$ if $\MD(i,j) < \infty$ (resp. $\MD(j,i) < \infty$). In light of the above, the master ILP can be written as:
\begin{small}
\begin{align}
\small
    (Master):\quad\min\ \sum_{j\in\mathcal{J}} \sum_{t = 1}^{h} c_{jt}^{job} x_{jt} + \sum_{l=1}^h\sum_{m=1}^{h+1}c^*_{lm}z_{lm}
\end{align}
\emph{subject to}
\begin{align}
    \sum_{t\in\mathcal{I}}x_{jt} = 1 &\quad \forall j \in \mathcal{J} \label{eq:benders_all_jobs_done} \\
    \sum_{t \in \mathcal{I}}t x_{vt} - \sum_{t \in \mathcal{I}}t x_{ut} \geq \MD(u,v) &\quad \forall(u,v) \in \mathcal{J}^2, \MD(u,v) < \infty \label{eq:benders_precedences} \\
    \sum_{j \in \mathcal{J}}\sum_{t' = \max\{1, t-p_j+1\}}^t x_{jt'} \leq 1 &\quad \forall t \in \mathcal{I} \label{eq:benders_capacities} \\
    \sum_{j \in \mathcal{J}} \sum_{t \in \text{StartTime} \cup \text{EndTime}_j} x_{jt} = 0 & \label{eq:benders_startAndEndLimitations} \\
    \sum_{j' \in Pred(j)\cup\overline{\mathcal{J}}} \sum_{t = 0}^{\MD(j',j)} x_{jt} + \sum_{j' \in Succ(j)\cup\overline{\mathcal{J}}} \sum_{t = h - \MD(j,j') + 1}^h x_{jt} = 0 &\quad \forall j \in \mathcal{J} \label{eq:benders_startAndEndLimitations2}\\
    \sum_{j \in \mathcal{J}} \sum_{t' = \max\{1, t - p_j + 1\}}^tx_{jt'} + \sum_{l=1}^t \sum_{m=t+1}^{h+1} z_{lm} = 1 &\quad \forall t \in \mathcal{I}
\end{align}
\end{small}

Constraints~\eqref{eq:benders_precedences} are a modified version of constraints~\eqref{eq:ilp_precedences_1} guarantee that each couple $(u,v) \in \mathcal{J}^2$ satisfies Proposition~\ref{prop:minimal_starting_time}. Constraints~\eqref{eq:benders_startAndEndLimitations2} address the scenarios where an energy-intensive task is preceded (resp. succeeded) by a sequence of non-energy-intensive tasks only. In this case, these constraints compel the energy-intensive task to allocate sufficient time before (resp. after) to allow this sequence to be scheduled. These two sets of constraints prune solutions that would have an associated subproblem that is trivially infeasible, without having the burden of verifying them.
The remaining constraints are similar to the ILP constraints (although restricted to the set $\mathcal{J}$ instead of $\mathcal{T}$).

It is worth noting that an integer solution found by the master problem may not be a valid solution to the RCPSP, as the resource constraints of non-energy-intensive tasks may not necessarily be enforced.
Hence, each time a feasible solution of the master problem is found, we must verify the feasibility for the RCPSP by means of an oracle that consists of solving the subproblem; if the oracle finds the master solution infeasible, new constraints are injected into the master to cut off this solution.

\subsubsection{Subproblem}

The subproblem consists of finding a feasible solution to the RCPSP problem associated with the solution provided by the master problem. Since $\mathcal{X}$ are fixed integers in the subproblem, they impose the starting time of the energy-intensive tasks in the RCPSP.
They shall be considered as parameters in the subproblem, denoted as $\hat{\mathcal{X}} = \{\hat{x}_{jt}\}_{j \in \mathcal{J}, t \in \mathcal{I}}$.
The subproblem also needs to schedule all tasks before the time horizon.
Given the findings of \cite{HEINZ2025}, which show that the constraint programming paradigm is highly effective in solving RCPSPs under a constraint time horizon, we adopt this methodology to address the subproblem. The proposed CP model is the following:

\begin{small}
\begin{align}
    (Subproblem)\quad C_{\max}^{sub}:\ & \min \max_{j \in \mathcal{T}} \textsc{EndOf}(\xi_j) \label{eq:cp-obj}\\
    &\sum_{j \in \mathcal{J}} \textsc{Pulse}(\xi_j,r_k^j) \leq \rho_k &\quad \forall k\in\mathcal{R} \label{eq:cp-capacity}\\
    &\textsc{EndBeforeStart}(\xi_i, \xi_j) &\quad \forall (i, j) \in A \label{eq:cp-precedences}\\
    &\textsc{EndOf}(\xi_j) \leq h &\quad \forall j \in \overline{\mathcal{J}} \label{eq:cp-horizon}\\
    &\textsc{StartOf}(\xi_j) = t &\quad \forall j \in \mathcal{J}, t \in \mathcal{I}, \hat{x}_{jt} = 1 \label{eq:cp-fixed-ee}\\
    &\textsc{Interval}\ \xi_j,\, \textsc{LengthOf}(\xi_j) = p_j &\quad \forall j \in \mathcal{T} 
\end{align}
\end{small}

As in the monolithic CP described in Section~\ref{sec:constraint_programming}, constraints~\eqref{eq:cp-capacity} ensure that the capacities of each resource are not exceeded. Constraints~\eqref{eq:cp-precedences} model the precedence constraints. Constraints~\eqref{eq:cp-horizon} prevent tasks from being scheduled after the end of the schedule horizon, while constraints~\eqref{eq:cp-fixed-ee} impose that energy-intensive tasks start at the same time as in the master solution. The Objective~\eqref{eq:cp-obj} minimizes the makespan.
In the particular case $\alpha = 1$, this objective function~\eqref{eq:cp-obj} does not need to be added to the CP, since our subproblem is therefore considered a feasibility problem. However, at the end of the search, after an optimal solution has been found, it can be relevant to recompute its associated subproblem in view of the objective function, such that we could retrieve the optimal makespan of the master optimal solution (this is only for a practical perspective; it does not change anything on the optimal cost of energy).

Let us denote $\mathcal{X}^\dagger = \{x_{jt}\ |\ x_{jt} \in \mathcal{X}, \hat{x}_{jt} = 1\}$.
When the subproblems cannot find a valid solution for a given energy-intensive task schedule given by the master, we compute the minimal infeasibility set $\mathcal{I}nf \subseteq \mathcal{X}^\dagger$ over the constraints~\eqref{eq:cp-fixed-ee}. This set contains the variables appearing in the minimal conflict that causes the infeasibility. Since the conflict is minimal, the removal of any one of these constraints will remove that particular cause of infeasibility. Note that there may be other conflicts in the solution.
This set can be automatically computed by the CP solver \citep{laborie2018ibm} if it proves the infeasibility of the subproblem. If the solver cannot compute this subset in a reasonable amount of time, then, since we know the solution is infeasible
, we set $\mathcal{I}nf = \mathcal{X}^\dagger$.
Once this set is determined, the following cuts are added to the master.

\paragraph{Feasibility cuts}

Given a minimal infeasibility set $\mathcal{I}nf$, we add in \emph{Master} the following feasibility cut:
\begin{small}
\begin{align}
\label{eq:lazy-constraint}
    \sum_{\substack{x_{jt} \in \mathcal{I}nf}} x_{jt} \leq |\mathcal{I}nf| - 1
\end{align}
\end{small}

This cut enforces that all, but one, of the energy-intensive tasks involved in the infeasibility set can keep their current starting time in a feasible solution. This rules out the current infeasible solution while also pruning all solutions that share the same infeasible subset. If $\mathcal{I}nf = \mathcal{X}^\dagger$, only the solution defined by $\mathcal{X}^\dagger$ is cut off.

\subsubsection{Warmstarts}
\label{subsection:warmstart}

An increase in instance size may undermine the efficacy of ILP approaches, thereby hindering their ability to find feasible solutions. In contrast, CP approaches are highly efficient in identifying feasible solutions quickly. However, these approaches often struggle to narrow the optimality gap due to their limited ability to generate strong lower bounds. Therefore, we propose two ways of generating a warmstart solution, both based on CP. The first uses the classical CP model \citep{artigues2013resource} with additional constraints~\eqref{eq:freecp-minimum-start} and \eqref{eq:freecp-endof}. However, we do not provide any objective function to the CP solver; the objective is to find a valid solution. We call \emph{fsws} this warmstart approach.
The second solves the problem through the CP model delineated in Section~\ref{sec:constraint_programming}, within a limited time frame of approximately one minute. We name \emph{cpws} this warmstart approach. Subsequently, the optimal solution identified through these processes is utilized as a starting point for our ILP approaches, encompassing both ILP and LBBD.

\subsection{RCPSP weighting on the master objective: Case $\alpha < 1$} 
\label{subsection:alphaleq1}

Now, let us focus on the case $\alpha < 1$. In this case, the $C_{\max}$ objective function of the RCPSP also contributes to the objective function. Therefore, we need to introduce a way to represent the makespan within the master problem. The issue is that, since most of the tasks of the RCPSP are not present in the master, we cannot induce the makespan directly from it.

Therefore, we introduce a new decision variable $q \in \mathbb{Z}_{\geq0}$ to represent the value of makespan. We rewrite the objective function as:
\begin{small}
\begin{align}
    \min\ \frac{\alpha}{lb_{TEC}} \cdot \left( \sum_{j\in\mathcal{J}} \sum_{t = 1}^{h} c_{jt}^{job} x_{jt} + \sum_{l=1}^h\sum_{m=1}^{h+1}c^*_{lm}z_{lm} \right) + \frac{1-\alpha}{lb_{RCPSP}} \cdot q
\end{align}
\end{small}

As described above, we require new constraints to ensure that the variable $q$ accurately models the makespan, in the case where the subproblem is feasible. Indeed, if the subproblem is infeasible, feasibility cuts~\eqref{eq:lazy-constraint} are added. Since the makespan is only known after computing the subproblem, we need Benders optimality cuts to inject the value of the optimal subproblem makespan $C^{sub}_{\max}$ into the master.

\paragraph{Optimality cut}
\citet{artigues2003insertion} showed that a solution of an RCPSP can be modeled as a resource-flow graph. The critical path of this graph contains all the tasks that contribute directly to the makespan, i.e., any change in the starting time of one of these tasks would modify the makespan.
The general idea of the proposed optimality cut is that, for a given solution of a problem, only a subset of the energy-intensive tasks are on the resource-flow critical path, so that only the tasks in this subset have to be in the optimality cut. In addition, by exploiting the precedence structure along this resource-flow critical path, we can estimate a lower bound on the makespan deviation.
Let $\Pi^\vartheta = \{j_1, j_2, \dots, j_{|\Pi^\vartheta|}\}$ be the critical energy-intensive tasks of an optimal subproblem solution $\vartheta$, indexed in increasing order of start time, and let $\hat{t}_{j_k}^\vartheta$ denote the start time of $j_k$ in $\vartheta$. For each $j_k$ we define:
\begin{small}
\begin{align}
\varphi(j_k,\hat{t}_{j_k}^\vartheta) \;=\; \max \Bigl\{ \hat{t}_{j_l}^\vartheta + p_{j_l} \;\Big|\; l < k,\ \hat{t}_{j_l}^\vartheta + p_{j_l} \leq \hat{t}_{j_k}^\vartheta \Bigr\},
\end{align}
\end{small}
with the convention $\varphi(j_1,\hat{t}_{j_1}^\vartheta) = 0$. The quantity $\varphi(j_k,\hat{t}_{j_k}^\vartheta)$ is the longest completion time, no later than $\hat{t}_{j_k}^\vartheta$, of a critical energy-intensive task of $\vartheta$ scheduled before $j_k$; it acts as the closest energy-intensive ``anchor'' on the chain leading to $j_k$, even when the immediate predecessor on the critical resource-flow path is itself non energy-intensive. Note that, by construction, $\varphi(j_k,\hat{t}_{j_k}^\vartheta) \leq \hat{t}_{j_k}^\vartheta \leq C_{\max}^{sub}$. The intuition for the cut is that, if the master deviates from the current critical assignment at $(j_k,\hat{t}_{j_k}^\vartheta)$, the energy-intensive task of realizing $\varphi(j_k,\hat{t}_{j_k}^\vartheta)$ is preserved in the subproblem and must still complete by that value, so the makespan cannot fall below it. We can therefore deduce the following Benders optimality cut:
\begin{small}
\begin{align}
C_{\max}^{sub} \;-\; \sum_{j_k \in \Pi^\vartheta} \bigl(C_{\max}^{sub} - \varphi(j_k,\hat{t}^\vartheta_{j_k})\bigr)\,(1-x_{j_k,\hat{t}^\vartheta_{j_k}}) \;\leq\; q.
\label{eq:optimality_cuts}
\end{align}
\end{small}
When the set $\Pi^\vartheta$ is empty (no critical energy-intensive task could be identified on the resource-flow graph), the sum is taken over the entire set of energy-intensive tasks scheduled in $\vartheta$, which conservatively recovers a valid cut.
 
\begin{proposition}
    Inequality~\eqref{eq:optimality_cuts} is a valid Benders optimality cut.
\end{proposition}
\begin{proof}
    To be a valid Benders optimality cut, the inequality must (i) cut off the current suboptimal master solution and (ii) preserve any other feasible integer solution. Let $\vartheta$ be the current suboptimal solution and denote
    $$Q = \left\{ x_{j_k,\hat{t}^\vartheta_{j_k}}\ \big|\ j_k \in \Pi^\vartheta \right\}$$
    the set of master assignments fixed by the critical energy-intensive tasks of $\vartheta$. For any feasible master solution $\varrho$, let
    $$R = \left\{ x_{j_k,\hat{t}^\varrho_{j_k}}\ \big|\ j_k \in \Pi^\vartheta \right\}$$
    denote the placements taken by those same tasks in $\varrho$, and define
    $$S^\varrho \;=\; \bigl\{\, j_k \in \Pi^\vartheta\ \big|\ \hat{t}^\varrho_{j_k} \neq \hat{t}^\vartheta_{j_k}\, \bigr\}$$
    so that $S^\varrho = \emptyset$ if and only f $Q \cap R = Q$. We distinguish two cases.
 
    \emph{Case $Q \cap R = Q$ (i.e., $S^\varrho = \emptyset$):} every factor $(1-x_{j,\hat{t}^\vartheta_j})$ vanishes and~\eqref{eq:optimality_cuts} reduces to $C_{\max}^{sub} \leq q$, which is correctly enforced since reproducing the critical chain of $\vartheta$ forces $C_{\max}^\varrho \geq C_{\max}^\vartheta = C_{\max}^{sub}$.
 
   \emph{Case $Q \cap R \neq Q$ (i.e., $S^\varrho \neq \emptyset$):} let $j_{k^\star}$ be the element of $S^\varrho$ of smallest index, that is, the earliest critical energy-intensive task of $\vartheta$ whose start time is changed in $\varrho$. By minimality of $k^\star$, every $j_l \in \Pi^\vartheta$ with $l < k^\star$ satisfies $\hat{t}^\varrho_{j_l} = \hat{t}^\vartheta_{j_l}$. In particular, the energy-intensive task achieving the maximum in the definition of $\varphi(j_{k^\star},\hat{t}^\vartheta_{j_{k^\star}})$ is placed identically in $\varrho$, hence still completes at time $\varphi(j_{k^\star},\hat{t}^\vartheta_{j_{k^\star}})$ and is preserved by the subproblem. Therefore,
    \begin{align}
        C_{\max}^\varrho \;\geq\; \varphi(j_{k^\star},\hat{t}_{j_{k^\star}}^\vartheta).
        \label{eq:lower_bound_makespan}
    \end{align}
    Evaluating the left-hand side (LHS) of~\eqref{eq:optimality_cuts} at $\varrho$ and using that each coefficient $(C_{\max}^\vartheta - \varphi(j_l,\hat{t}^\vartheta_{j_l}))$ is non-negative,
    \begin{small}
    $$C_{\max}^\vartheta \;-\; \sum_{j_l \in S^\varrho} \bigl(C_{\max}^\vartheta - \varphi(j_l,\hat{t}^\vartheta_{j_l})\bigr) \;\leq\; C_{\max}^\vartheta - \bigl(C_{\max}^\vartheta - \varphi(j_{k^\star},\hat{t}^\vartheta_{j_{k^\star}})\bigr) \;=\; \varphi(j_{k^\star},\hat{t}^\vartheta_{j_{k^\star}}),$$
    \end{small}
    so combining with~\eqref{eq:lower_bound_makespan} we obtain $q = C_{\max}^\varrho \geq$ LHS of~\eqref{eq:optimality_cuts}, as required. Hence~\eqref{eq:optimality_cuts} cuts off only suboptimal master solutions and preserves all feasible ones.
\end{proof}

\paragraph{Valid inequalities}
The optimality constraints update the value of the objective of the master problem. However, they are only evaluated for integer solutions. To reduce the master search tree, it is worth noting that it is possible to estimate a lower bound on the makespan based on the release date of a task.

\begin{proposition}
    For every energy-intensive task $j\in \mathcal{J}$, the makespan $q$ must satisfy the following constraints:
    \begin{small}\begin{equation}
    \label{eq:benders-minimal-makespan-1}
        \sum_{t \in \mathcal{I}} t x_{jt} + \MD(j,s) + p_s \leq q\quad \forall s \in Succ(j)
    \end{equation}\end{small}
\end{proposition}
\begin{proof}
    The makespan of a solution of our problem cannot be less than the minimum amount of time required to process the consecutive tasks, i.e., as if the resource constraints of the RCPSP were ignored and only precedence constraints were present. Thus, we can deduce that, for each energy-intensive task $j$, a valid makespan must be at least equal to the largest minimum distances between $j$ and its successors, i.e.:
    \begin{small}
    $$\forall j \in \mathcal{J}, \forall s \in Succ(j),\ C_{\max} \geq S_j +  \MD(j,s) + p_s$$
    \end{small}
Hence, we can infer the constraints~\eqref{eq:benders-minimal-makespan-1} from them.
\end{proof}

Let us define $\MD^*(j) = \max_{s \in Succ(j)} \{ \MD(j, s) \}$ the \emph{maximal minimal distance} over the successors of a task $j \in \mathcal{T}$.


\begin{corollary}
    For every energy-intensive task $j\in \mathcal{J}$, the makespan $q$ must satisfy the following constraint:
    \begin{small}
    \begin{equation}
    \label{eq:benders-minimal-makespan-2}
        q\geq\sum_{t \in \mathcal{I}} t x_{jt} + \MD^*(j)
    \end{equation}
    \end{small}
\end{corollary}

Constraints~\eqref{eq:benders-minimal-makespan-2} allow us to compute a lower bound for the makespan in the master problem, considering the starting times of the energy-intensive tasks. They greatly help the convergence of our LBBD. Indeed, Constraints~\eqref{eq:optimality_cuts} ultimately only come into play to correct an inaccurate estimate of the makespan, in cases where constraints in resource capacity prevent tasks from being completed as quickly as they could.

Also, we know that the makespan of an optimal solution of our problem cannot exceed by construction the $lb_{RCPSP}$ makespan. Hence, we can add the following constraint:
\begin{align}
\label{eq:makespan_lb_global}
    q \geq lb_{RCPSP}
\end{align}

This constraint~\eqref{eq:makespan_lb_global} contributes significantly to the convergence of our solution (see Figure~\ref{fig:impact_lb_q_computation_time}); indeed, it provides a strong assumption on the value of $q$, which helps to shrink the search space. In fairness, we also inject this constraint into the monolithic formulations. 


\paragraph{Additional remarks}
To conclude this section, we discuss an important remark. The proposed LBBD guarantees the optimality of the final solution as long as all the subproblems have been either solved to optimality or proved infeasible. However, depending on the size or the complexity of the subproblem, it may end up being hard to prove either the feasibility or the optimality of $Subproblem$ in a reasonable time. In these cases, we proceed as follows: if $Subproblem$ finds a feasible solution but cannot close the optimality gap, we insert in $Master$ an optimality constraint~\eqref{eq:optimality_cuts} where $C^{sub}_{\max}$ is the best makespan found so far. On the contrary, when $Subproblem$ can neither identify a feasible solution nor assert that no solution exists, we consider that the problem is infeasible, and we rule out the solution with a feasibility cut~\eqref{eq:lazy-constraint} with $\mathcal{I}nf = \mathcal{X}^\dagger$. Alas, both situations may lead to optimal solutions being cut off in the master.
Although CP approaches are known to be efficient in deciding feasibility, they may struggle to close the gap between the lower and upper bounds.
To tackle this issue that could impede the validity of our approach, we divide the experimental sections into two sections. In the first one, we will only consider instances in which the LBBD can certify the optimality of the provided solutions, i.e., all subproblems have either been solved to optimality or declared infeasible. In this section, the LBBD can thus be treated as an optimal method. We denote it \LBBDG.
In the next section, we will impose a time limit on the computation of the subproblem. By doing so, we may end up generating suboptimal cuts. We denote it \LBBD.

\section{Computational experiments}
\label{section:experiments}

In this section, we assess to what extent the proposed LBBD is a practical alternative to the two monolithic baselines, the compact ILP of Section~\ref{section:problem_statement} and the CP model of Section~\ref{sec:constraint_programming}, and we identify the regimes, in terms of instance density and weighting parameter $\alpha$, in which one approach is preferable to another. Throughout the section, we compare four methods.
The monolithic ILP warm-started from a feasible solution of the underlying RCPSP (found via a classical CP formulation of RCPSP), denoted \ILPF, serves as the reference, and the per-instance gain $\delta_\mu$ for any other method $\mu$ is calculated as a percentage according to Equation~\eqref{eq:delta}. The three remaining methods are: the same monolithic ILP warm-started from a 60 seconds run of \Freecp solution (\ILPC), the monolithic constraint-programming model (\Freecp), and the proposed LBBD detailed in Section~\ref{section:LBBD}, denoted \texttt{LBBD}.

The experiments are divided into three sections. After detailing the dataset, we investigate the impact of each \texttt{LBBD}'s components on the solution quality. Then, we compare our novel approach to the others over instances in which \texttt{LBBD} guarantees the optimality of its optimal solution, whereas in the last section, we observe the \texttt{LBBD} over all instances. Finally, we synthesize the results and critically evaluate the relevance and performance of each method with respect to the specific instance configurations.

All experiments were run on 2 x AMD\textsuperscript{\texttrademark} EPYC\textsuperscript{\texttrademark} 9124 CPUs 3.5 GHz equipped with 64 threads in total and 384 GB of RAM, running NixOS 25.05. Gurobi 13.0.1 and IBM CP Optimizer 22.1.1.0 are used to solve the ILP and CP models, respectively. All the code has been written in \verb|C++23|, compiled with Clang~19.1.7 and the \texttt{O2} optimization flag, using the Boost Graph Library~\citep{siek2001boost} to implement graph components.
All experiments are run on a single thread, and the solver's default pre-solve strategies are enabled. A time limit of 1800 seconds is imposed for all experiments. We impose a soft limit on the memory usage of the solvers of 50 GB. The other solver parameters are left at their default values, except for Gurobi's numeric focus set to 2.
In addition, the precomputation time of the paths SPACES graph and of the $\MD$ function~\eqref{eq:md_definition} are not reported in our experiments, because they never exceed one second in every instance of our datasets, which is negligible.

\subsection{Datasets}
\label{section:dataset}
We generate our instances based on the RCPSP instances of the standard \href{https://www.om-db.wi.tum.de/psplib/}{PSPLIB dataset}~\citep{sprecher1996psplib} that contain sets of instances with $\{30,60,90,120\}$ activities.
To increase the number of tasks in our instances, we defined an approach to merge multiple small RCPSP instances from PSPLIB until we obtained an instance of the desired size\footnote{The generator and the instances are available in \href{https://github.com/corentinjuvigny/rcpsp_ms_tou_generator}{github.com/corentinjuvigny/rcpsp\_ms\_tou\_generator}}.
When we merge two smaller instances, the resources' capacities are set to the maximum of the two.
Then, these two merged projects are connected to the dummy start and end tasks, creating a parallel network of the two projects.
The process continues until the requested instance size is met (or exceeded).
In this way, the complexity of the original RCPSP instances is preserved.
See an example of the precedence structure in the instances generated in the supplementary materials.
We consider for each instance five resources ($R_0, \dots, R_4$).
Then, we selected a given percentage of tasks in the newly created instance that we marked as ``energy-intensive'' (i.e., we set all of its other resource requirements to 0, then we added a requirement of 1 for $R_0$).
These tasks are selected uniformly throughout the project. Indeed, a more in-depth analysis revealed no significant effect on the allocation or positioning of energy-intensive tasks within the project workflow.
The energy-intensive resource $R_0$ uses a transition diagram from Figure~\ref{fig:example-func-power-time}, which is based on a diagram proposed by~\cite{shrouf2014optimizing}.
Finally, we generate a cost vector $c$ of energy prices based on real TOU data collected from the Czech OTE\footnote{\href{https://www.ote-cr.cz/en}{https://www.ote-cr.cz/en}}.
We partition our test instances according to their ratio $\rho:=|\mathcal{J}|/|\mathcal{T}|$ into three categories:
\begin{itemize}[itemsep=0pt, parsep=0pt]
    \item Sparse: $\rho \approx 3\text{ - }5\%$
    \item Standard: $\rho \approx 15\text{ - }20\%$
    \item Dense: $\rho \approx 50\%$
\end{itemize}

\noindent For each of those categories, we generated instance sets of increasing sizes. Each set contains 20 instances.
Sparse instances have up to 640 tasks, standard instances up to 480 tasks, and dense instances up to 224 tasks.
Each instance has its own distinct cost vector.
Tables~C.1 and C.2 of the supplementary materials provide additional details on the number of tasks and the time horizon for each set.
The datasets containing the sparse, standard, and dense instance sets are available at \cite{DVN/MQS0NY_2025}, \cite{DVN/EPX98Z_2025}, and \cite{DVN/SH2DSA_2025}, respectively.

\subsection{Analysis of \texttt{LBBD} components}

First, we analyze the components of the proposed \texttt{LBBD} approach to assess their impact on the overall quality of the method. We do all the experiments on the six smallest standard sets. Only instances for which the \LBBD guarantees optimality of its solutions are considered in this paragraph.

\paragraph{Impact of global lower bound on makespan (constraint~\eqref{eq:makespan_lb_global})}

First, let us analyze the impact of introducing a global lower bound on the makespan.
Figure~\ref{fig:impact_lb_q_computation_time} compares the computation time between \LBBDG with (\emph{\texttt{LBBD-lbq}}) and without (\emph{\texttt{LBBD-nolbq}}) the lower bound on the makespan. We see that the closer $\alpha$ is to $0$, the more impactful the lower bound is on the convergence speed of our approach, which is negligible when $\alpha  = 1$. The behavior is reasonable because the smaller $\alpha$, the greater the contribution of the makespan to the objective value. Table~\ref{tab:average_cuts} corroborates the analysis by showing that the approach without constraint~\ref{eq:makespan_lb_global} requires many more Benders cuts than the approaches with it, especially when $\alpha$ is close to $0$. In fairness, this global lower bound is also added in ILP formulations.

\begin{figure}[ht]
    \centering
    \resizebox{0.75\linewidth}{!}{\begin{tikzpicture}[baseline=(current bounding box.south)]

\pgfplotsset{
  m lbbd/.style        ={solid, color=blue!60,   fill opacity=0.45, line width=1pt, mark=none, no marks},
  m lbbdnouc/.style    ={solid, color=red!60,    fill opacity=0.45, line width=1pt, mark=none, no marks},
  m lbbdnolbq/.style   ={solid, color=violet!60, fill opacity=0.45, line width=1pt, mark=none, no marks},
  m lbbdnolbqcp/.style   ={solid, color=green!60, fill opacity=0.45, line width=1pt, mark=none, no marks},
}

\newcommand{\Ntot}{60}

\begin{groupplot}[
  group style={group size=2 by 2, horizontal sep=1.4cm, vertical sep=1.6cm},
  width=0.45\textwidth, height=0.28\textwidth, scale only axis,
  xlabel={Time [s]}, ylabel={\% instances proven optimal},
  xmin=0, xmax=1800,
  xmode=log, log basis x=10,
  grid=major, no marks,
  legend cell align=left,
  legend style={
    at={(0.02,0.98)}, anchor=north west, font=\scriptsize,
    fill=white, fill opacity=0.9, text opacity=1, draw=black!30,
  },
  title style={font=\small},
]

\nextgroupplot[title={$\alpha = 0.25$}, ymin=-1, ymax=100, xlabel={}]
  \addplot[m lbbdnouc,  restrict x to domain=0:1800]
    table [x=t, y expr=100*\thisrow{lbbd-wcp-wnouc}/\Ntot,        col sep=comma]
    {fig/tikz_data/cactus_lbqEffect_alpha25.csv};  \addlegendentry{\texttt{LBBD-lbq}}
  \addplot[m lbbd,      restrict x to domain=0:1800]
    table [x=t, y expr=100*\thisrow{lbbd-wcp-wuc-cpws}/\Ntot,     col sep=comma]
    {fig/tikz_data/cactus_lbqEffect_alpha25.csv};  \addlegendentry{\texttt{LBBD-lbq-cpws}}
  \addplot[m lbbdnolbq, restrict x to domain=0:1800]
    table [x=t, y expr=100*\thisrow{lbbd-wcp-wnouc-nolbq}/\Ntot,  col sep=comma]
    {fig/tikz_data/cactus_lbqEffect_alpha25.csv};  \addlegendentry{\texttt{LBBD-nolbq}}
  \addplot[m lbbdnolbqcp, restrict x to domain=0:1800]
    table [x=t, y expr=100*\thisrow{lbbd-wcp-nolbq-cpws}/\Ntot,  col sep=comma]
    {fig/tikz_data/cactus_lbqEffect_alpha25.csv};  \addlegendentry{\texttt{LBBD-nolbq-cpws}};

\nextgroupplot[title={$\alpha = 0.5$}, ymin=-1, ymax=90, xlabel={}, ylabel={}]
  \addplot[m lbbdnouc,  restrict x to domain=0:1800]
    table [x=t, y expr=100*\thisrow{lbbd-wcp-wnouc}/\Ntot,        col sep=comma]
    {fig/tikz_data/cactus_lbqEffect_alpha50.csv};
  \addplot[m lbbd,      restrict x to domain=0:1800]
    table [x=t, y expr=100*\thisrow{lbbd-wcp-wuc-cpws}/\Ntot,     col sep=comma]
    {fig/tikz_data/cactus_lbqEffect_alpha50.csv};
  \addplot[m lbbdnolbq, restrict x to domain=0:1800]
    table [x=t, y expr=100*\thisrow{lbbd-wcp-wnouc-nolbq}/\Ntot,  col sep=comma]
    {fig/tikz_data/cactus_lbqEffect_alpha50.csv};
  \addplot[m lbbdnolbqcp, restrict x to domain=0:1800]
    table [x=t, y expr=100*\thisrow{lbbd-wcp-nolbq-cpws}/\Ntot,  col sep=comma]
    {fig/tikz_data/cactus_lbqEffect_alpha50.csv};

\nextgroupplot[title={$\alpha = 0.75$}, ymin=-1, ymax=60]
  \addplot[m lbbdnouc,  restrict x to domain=0:1800]
    table [x=t, y expr=100*\thisrow{lbbd-wcp-wnouc}/\Ntot,        col sep=comma]
    {fig/tikz_data/cactus_lbqEffect_alpha75.csv};
  \addplot[m lbbd,      restrict x to domain=0:1800]
    table [x=t, y expr=100*\thisrow{lbbd-wcp-wuc-cpws}/\Ntot,     col sep=comma]
    {fig/tikz_data/cactus_lbqEffect_alpha75.csv};
  \addplot[m lbbdnolbq, restrict x to domain=0:1800]
    table [x=t, y expr=100*\thisrow{lbbd-wcp-wnouc-nolbq}/\Ntot,  col sep=comma]
    {fig/tikz_data/cactus_lbqEffect_alpha75.csv};
  \addplot[m lbbdnolbqcp, restrict x to domain=0:1800]
    table [x=t, y expr=100*\thisrow{lbbd-wcp-nolbq-cpws}/\Ntot,  col sep=comma]
    {fig/tikz_data/cactus_lbqEffect_alpha75.csv};

\nextgroupplot[title={$\alpha = 1.0$}, ymin=-1, ymax=110, ylabel={}]
  \addplot[m lbbdnouc,  restrict x to domain=0:1800]
    table [x=t, y expr=100*\thisrow{lbbd-wcp-wnouc}/\Ntot,        col sep=comma]
    {fig/tikz_data/cactus_lbqEffect_alpha100.csv};
  \addplot[m lbbd,      restrict x to domain=0:1800]
    table [x=t, y expr=100*\thisrow{lbbd-wcp-wuc-cpws}/\Ntot,     col sep=comma]
    {fig/tikz_data/cactus_lbqEffect_alpha100.csv};
  \addplot[m lbbdnolbq, restrict x to domain=0:1800]
    table [x=t, y expr=100*\thisrow{lbbd-wcp-wnouc-nolbq}/\Ntot,  col sep=comma]
    {fig/tikz_data/cactus_lbqEffect_alpha100.csv};
  \addplot[m lbbdnolbqcp, restrict x to domain=0:1800]
    table [x=t, y expr=100*\thisrow{lbbd-wcp-nolbq-cpws}/\Ntot,  col sep=comma]
    {fig/tikz_data/cactus_lbqEffect_alpha100.csv};

\end{groupplot}
\end{tikzpicture}}
    \caption{Comparison of the percentage of solutions solved to optimality in less than $t$ seconds between the \LBBDG~with (\emph{\texttt{LBBD-lbq}}) / without (\emph{\texttt{LBBD-nolbq}}) constraint~\eqref{eq:makespan_lb_global} and the warmstarted with a 60s-timelimit solution of \Freecp (\emph{\texttt{LBBD-lbq-cpws} / \texttt{LBBD-nolbq-cpws}}).}
    \label{fig:impact_lb_q_computation_time}
\end{figure}

\begin{table}[ht]
\centering
\begin{small}
\begin{tabular}{|ll||r|r|}
\toprule
$\alpha$  & method  & Avg optimality cuts & Avg feasibility cuts \\
\midrule
\multirow[c]{3}{*}{0.25}  & \texttt{LBBD-nolbq} & 1306.97 & 26.30 \\
 & \texttt{LBBD-lbq} & 405.12 & 15.50 \\
 & \texttt{LBBD-lbq-cpws} & 106.17 & 10.07 \\
\cline{1-4}
\multirow[c]{3}{*}{0.50} & \texttt{LBBD-nolbq} & 192.65 & 13.03 \\
 & \texttt{LBBD-lbq} & 56.13 & 4.88 \\
 & \texttt{LBBD-lbq-cpws} & 16.98 & 3.20 \\
\cline{1-4}
\multirow[c]{3}{*}{0.75} & \texttt{LBBD-nolbq} & 24.08 & 1.27 \\
 & \texttt{LBBD-lbq} & 21.72 & 1.35 \\
 & \texttt{LBBD-lbq-cpws} & 10.98 & 0.67 \\
\cline{1-4}
\multirow[c]{3}{*}{1.00} & \texttt{LBBD-nolbq} & 8.98 & 1.62 \\
 & \texttt{LBBD-lbq} & 8.98 & 1.62 \\
 & \texttt{LBBD-lbq-cpws} & 5.42 & 1.23 \\
\bottomrule
\end{tabular}
\end{small}
\caption{Average number of optimality and feasibility cuts per \LBBDG variant on the reduced sets of standard instances.}
\label{tab:average_cuts}
\end{table}

\paragraph{Impact of the \Freecp warmstart procedure}

Figure~\ref{fig:impact_lb_q_computation_time} also provides the computation time performance of the \LBBDG warmstarted by a solution found by a 60s run of \Freecp (\emph{\texttt{LBBD-lbq-cpws}}). Clearly, we see that during the first 60 seconds, the latter proves the optimality of only a few instances. Indeed, this time interval corresponds to the computation of the warmstart solution by \Freecp. But once this warmstart solution yielded, the percentage of solutions proven to be optimal skyrockets, surpassing the percentage of the non-CP-warmstarted version, showing that this warmstart solution is a good starting point. In addition, Table~\ref{tab:average_cuts} indicates that the CP-warmstarted version requires fewer Benders cuts, which means that it explores fewer integer solutions. Moreover, Figure~\ref{fig:impact_lb_q_computation_time} shows that the impact of the constraint~\eqref{eq:makespan_lb_global} on the convergence time is greater than the warm starting, since \emph{\texttt{LBBD-nolbq-cpws}} does not noticeably provide better results than \emph{\texttt{LBBD-nolbq}}.

\paragraph{Time limits tuning}
The time limit of the subproblem influences the performance of the \texttt{LBBD}. If set too tight, we may rule out an optimal solution, but set too long, it may impede the overall search by allotting too much time in closing the gap of one subproblem instead of exploring other ones. Table~\ref{tab:evoluation-obj-along-subtl} shows, for two sparse instances (in which the time limit of the subproblem is critical), that by increasing the time limit the objective value decreases until it reaches a plateau from which it goes back up.
We employ \textsc{iRace} \citep{irace} to find adequate values of the time limits of \emph{Subproblem} and \Freecp warmstart that maximize the ratio of the quality of the solution found over time consumption.
The results gave a warmstart computation time limit of 60s and a subproblem computation time of 20s.

\begin{table}[]
    \centering
    \begin{small}
    \begin{tabular}{|l|rrr|rrr|}
    \toprule
        Instance & \multicolumn{3}{c|}{sparse\_5\_5} & \multicolumn{3}{c|}{sparse\_10\_3} \\
        Sub-TL & obj & Time (s) & \#Non-opt & obj & Time (s) & \#Non-opt \\
    \midrule
        1  & 1.12471 & 155  & 93 & 1.07633 & 1739.22 & 1744 \\
        5  & 1.12077 & 400  & 67 & 1.07804 & 1843.72 & 368  \\
        10 & 1.11915 & 731  & 66 & 1.07368 & 1858.45 & 185  \\
        15 & 1.11915 & 1065 & 66 & 1.07938 & 1847.40 & 122  \\
        20 & 1.11915 & 1421 & 67 & 1.07938 & 1915.40 & 95   \\
        30 & 1.11915 & 2100 & 67 & 1.08874 & 1944.12 & 63   \\
    \bottomrule
    \end{tabular}
    \end{small}
    \caption{Evolution of the objective and computation time of two sparse instances when time limit of the subproblem changes ($\alpha={0.5}$).}
    \label{tab:evoluation-obj-along-subtl}
\end{table}

In summary, the additional procedures and constraints proposed significantly improve our LBBD method. Therefore, we will henceforth consider that they are included in our LBBD method.

\subsection{Comparison \LBBDG\,--- \ILP\,--- \Freecp}
\label{section:analysis_lbbdg}

In this subsection, we will only consider instances for which the LBBD approach guarantees the optimality of its optimal solution, i.e., all of its subproblems have been either solved to optimality or proved infeasible.

We propose to compare the following approaches: the monolithic ILP formulation, warm-started from a feasible solution (\emph{\ILPF}), the monolithic ILP warm-started from a 60s-\Freecp solution (\emph{\ILPC}), the monolithic CP approach (\emph{\Freecp}), and the proposed Benders decomposition approach (\emph{\LBBDG}).
The \ILPF is our reference method, i.e., for any other method $\mu$, we compute the delta of this reference method as:
\begin{small}
\begin{align}
\label{eq:delta}
    \delta_{\mu} = \frac{obj_{\ILPF}-obj_\mu}{|obj_{\ILPF}|}\cdot 100
\end{align}
\end{small}

Additional graphics, including box plots and more detailed tables, are available in the supplementary materials.

\paragraph{Standard instances}
Figure~\ref{fig:standard_optimality_pct} shows the percentage of instances of standard density over time for each method. It reveals that the \LBBDG approach outperforms the others by proving the optimality of many more instances, and the advantage is most striking at $\alpha = 1$, where it proves close to 95\% of the instances optimal against roughly 65\% for \ILPC and below 5\% for \Freecp. As is typical for a pure CP model, \Freecp proves the fewest instances optimal of all four methods.
When we observe the quality of the solutions that each method produces (Table~\ref{tab:summary-performance-combined-standard}, measured by the gain~\eqref{eq:delta} over \ILPF), we see that \LBBDG achieves better gains than the other methods when $\alpha \in \{0.25,1.0\}$ and is slightly better than \Freecp in the biggest set of instances when $\alpha \in \{0.5, 0.75\}$ but is beaten by the latter in smaller instances. \ILPC is almost always behind both \Freecp and \LBBDG, except when $\alpha = 1$ where \Freecp noticeably underperforms even \ILPF. However, for this value of $\alpha$ \LBBDG has a clearest advantage over other methods.

\begin{figure}
    \centering
    \resizebox{0.75\linewidth}{!}{\begin{tikzpicture}[baseline=(current bounding box.south)]

\pgfplotsset{
  m freecp/.style    ={solid, color=green!60, fill opacity=0.45, line width=1pt, mark=none, no marks},
  m ilpcpws/.style   ={solid, color=orange!60, fill opacity=0.45, line width=1pt, mark=none, no marks},
  m ilpfsws/.style   ={solid, color=gray!60, fill opacity=0.45,  line width=1pt, mark=none, no marks},
  m lbbd/.style      ={solid, color=blue!60, fill opacity=0.45, line width=1pt, mark=none, no marks},
}

\newcommand{\Ntot}{300}

\begin{groupplot}[
  group style={group size=2 by 2, horizontal sep=1.4cm, vertical sep=1.6cm},
  width=0.45\textwidth, height=0.28\textwidth, scale only axis,
  xlabel={Time [s]}, ylabel={\% instances proven optimal},
  xmin=0, xmax=1800,
  xmode=log, log basis x=10,
  grid=major, no marks,
  legend cell align=left,
  legend style={
    at={(0.02,0.98)}, anchor=north west, font=\scriptsize,
    fill=white, fill opacity=0.9, text opacity=1, draw=black!30,
  },
  title style={font=\small},
]

\nextgroupplot[title={$\alpha = 0.25$}, ymin=-1, ymax=80, xlabel={}]
  \addplot[m freecp,  restrict x to domain=0:1800]
    table [x=t, y expr=\thisrow{freecp},            col sep=comma]
    {fig/tikz_data/cactus_standard_alpha25.csv};  \addlegendentry{\Freecp}
  \addplot[m ilpcpws, restrict x to domain=0:1800]
    table [x=t, y expr=\thisrow{ilp-cpws},          col sep=comma]
    {fig/tikz_data/cactus_standard_alpha25.csv};  \addlegendentry{\ILPC}
  \addplot[m ilpfsws, restrict x to domain=0:1800]
    table [x=t, y expr=\thisrow{ilp-fsws},          col sep=comma]
    {fig/tikz_data/cactus_standard_alpha25.csv};  \addlegendentry{\ILPF}
  \addplot[m lbbd,    restrict x to domain=0:1800]
    table [x=t, y expr=\thisrow{lbbd-wcp-wuc-cpws}, col sep=comma]
    {fig/tikz_data/cactus_standard_alpha25.csv};  \addlegendentry{\LBBDG}

\nextgroupplot[title={$\alpha = 0.5$}, ymin=-1, ymax=50, xlabel={}, ylabel={}]
  \addplot[m freecp,  restrict x to domain=0:1800]
    table [x=t, y expr=\thisrow{freecp},            col sep=comma]
    {fig/tikz_data/cactus_standard_alpha50.csv};
  \addplot[m ilpcpws, restrict x to domain=0:1800]
    table [x=t, y expr=\thisrow{ilp-cpws},          col sep=comma]
    {fig/tikz_data/cactus_standard_alpha50.csv};
  \addplot[m ilpfsws, restrict x to domain=0:1800]
    table [x=t, y expr=\thisrow{ilp-fsws},          col sep=comma]
    {fig/tikz_data/cactus_standard_alpha50.csv};
  \addplot[m lbbd,    restrict x to domain=0:1800]
    table [x=t, y expr=\thisrow{lbbd-wcp-wuc-cpws}, col sep=comma]
    {fig/tikz_data/cactus_standard_alpha50.csv};

\nextgroupplot[title={$\alpha = 0.75$}, ymin=-1, ymax=26]
  \addplot[m freecp,  restrict x to domain=0:1800]
    table [x=t, y expr=\thisrow{freecp},            col sep=comma]
    {fig/tikz_data/cactus_standard_alpha75.csv};
  \addplot[m ilpcpws, restrict x to domain=0:1800]
    table [x=t, y expr=\thisrow{ilp-cpws},          col sep=comma]
    {fig/tikz_data/cactus_standard_alpha75.csv};
  \addplot[m ilpfsws, restrict x to domain=0:1800]
    table [x=t, y expr=\thisrow{ilp-fsws},          col sep=comma]
    {fig/tikz_data/cactus_standard_alpha75.csv};
  \addplot[m lbbd,    restrict x to domain=0:1800]
    table [x=t, y expr=\thisrow{lbbd-wcp-wuc-cpws}, col sep=comma]
    {fig/tikz_data/cactus_standard_alpha75.csv};

\nextgroupplot[title={$\alpha = 1.0$}, ymin=-1, ymax=100, ylabel={}]
  \addplot[m freecp,  restrict x to domain=0:1800]
    table [x=t, y expr=\thisrow{freecp},            col sep=comma]
    {fig/tikz_data/cactus_standard_alpha100.csv};
  \addplot[m ilpcpws, restrict x to domain=0:1800]
    table [x=t, y expr=\thisrow{ilp-cpws},          col sep=comma]
    {fig/tikz_data/cactus_standard_alpha100.csv};
  \addplot[m ilpfsws, restrict x to domain=0:1800]
    table [x=t, y expr=\thisrow{ilp-fsws},          col sep=comma]
    {fig/tikz_data/cactus_standard_alpha100.csv};
  \addplot[m lbbd,    restrict x to domain=0:1800]
    table [x=t, y expr=\thisrow{lbbd-wcp-wuc-cpws}, col sep=comma]
    {fig/tikz_data/cactus_standard_alpha100.csv};

\end{groupplot}
\end{tikzpicture}}
    \caption{Comparison of the percentage of solutions solved to optimality over time for each method in standard sets.}
    \label{fig:standard_optimality_pct}
\end{figure}

\begin{table}[ht]
\centering
\begin{footnotesize}
\begin{tabular}{|l|rrr|rrr|}
\toprule
 & \Freecp & \ILPC & \LBBDG & \Freecp & \ILPC & \LBBDG \\
\cmidrule{1-7}
Set & \multicolumn{3}{c|}{$\alpha = 0.25$} & \multicolumn{3}{c|}{$\alpha = 0.5$} \\
\midrule
1 & \textbf{0.00} & \textbf{0.00} & \textbf{0.00} & \textbf{0.00} & \textbf{0.00} & \textbf{0.00} \\
2 & \textbf{0.00} & \textbf{0.00} & \textbf{0.00} & \textbf{0.00} & \textbf{0.00} & \textbf{0.00} \\
3 & 2.90 & 2.93 & \textbf{2.95} & \textbf{0.95} & 0.93 & 0.89 \\
4 & 6.35 & 5.79 & \textbf{6.55} & \textbf{5.07} & 4.83 & 4.91 \\
5 & 6.55 & 6.44 & \textbf{6.66} & \textbf{4.65} & 4.62 & 4.63 \\
6 & 8.27 & 7.71 & \textbf{8.51} & \textbf{6.44} & 6.09 & 6.35 \\
7 & 6.53 & 6.28 & \textbf{6.54} & \textbf{5.16} & 4.84 & 4.94 \\
8 & \textbf{6.89} & 6.61 & 6.71 & \textbf{5.16} & 4.39 & 5.01 \\
9 & 4.83 & 6.09 & \textbf{6.30} & \textbf{4.22} & 3.98 & 4.16 \\
10 & \textbf{4.77} & 3.46 & 4.76 & \textbf{3.82} & 2.72 & 3.30 \\
11 & \textbf{4.23} & 3.16 & 4.19 & \textbf{2.98} & 1.68 & 2.94 \\
12 & \textbf{4.98} & 4.21 & 4.97 & 3.42 & 2.48 & \textbf{3.45} \\
13 & \textbf{4.64} & 3.45 & 4.62 & \textbf{3.25} & 2.54 & 3.24 \\
14 & 4.78 & 5.08 & \textbf{5.63} & 4.27 & 3.35 & \textbf{4.96} \\
15 & 4.66 & 4.30 & \textbf{5.15} & 4.29 & 3.48 & \textbf{4.59} \\
\textit{Mean} & \textit{4.69} & \textit{4.37} & \textit{\textbf{4.90}} & \textit{\textbf{3.58}} & \textit{3.06} & \textit{3.56} \\
\midrule
 & \multicolumn{3}{c|}{$\alpha = 0.75$} & \multicolumn{3}{c|}{$\alpha = 1.0$} \\
\midrule
1 & \textbf{0.00} & \textbf{0.00} & \textbf{0.00} & \textbf{0.00} & \textbf{0.00} & \textbf{0.00} \\
2 & \textbf{0.00} & \textbf{0.00} & \textbf{0.00} & \textbf{0.00} & \textbf{0.00} & \textbf{0.00} \\
3 & 0.47 & \textbf{0.77} & 0.69 & \textbf{0.00} & \textbf{0.00} & \textbf{0.00} \\
4 & 2.57 & 2.54 & \textbf{2.63} & -0.06 & \textbf{0.00} & \textbf{0.00} \\
5 & 2.22 & 2.28 & \textbf{2.67} & -0.38 & \textbf{0.00} & \textbf{0.00} \\
6 & 3.69 & 3.27 & \textbf{3.70} & -1.93 & -0.00 & \textbf{0.00} \\
7 & \textbf{4.06} & 2.72 & 3.99 & -3.08 & 0.02 & \textbf{0.02} \\
8 & \textbf{15.32} & 6.95 & 10.30 & -4.17 & 0.16 & \textbf{0.17} \\
9 & 3.17 & 2.96 & \textbf{3.59} & -3.38 & 0.74 & \textbf{0.74} \\
10 & \textbf{3.28} & 2.58 & 3.00 & 2.29 & 3.73 & \textbf{5.77} \\
11 & 2.29 & 1.82 & \textbf{2.67} & 1.12 & 3.85 & \textbf{5.28} \\
12 & 2.12 & 1.76 & \textbf{2.37} & 1.88 & 5.11 & \textbf{6.53} \\
13 & 3.05 & 2.26 & \textbf{3.91} & 5.86 & 9.86 & \textbf{11.71} \\
14 & 3.76 & 2.55 & \textbf{4.35} & 11.87 & 12.63 & \textbf{17.85} \\
15 & 3.58 & 2.80 & \textbf{3.69} & 11.58 & 14.71 & \textbf{17.30} \\
\textit{Mean} & \textit{\textbf{3.30}} & \textit{2.35} & \textit{3.17} & \textit{1.44} & \textit{3.39} & \textit{\textbf{4.36}} \\
\bottomrule
\end{tabular}
\end{footnotesize}
\caption{Performance comparison relative to ILP-fsws across different $\alpha$ values on standard instance sets. The results are given as percentages. The best approaches per set is reported in bold.}
\label{tab:summary-performance-combined-standard}
\end{table}

\paragraph{Dense instances}
In dense instances, \LBBDG stays superior compared to the other methods, both in objective value (see Table~\ref{tab:summary-performance-dense-combined}) and in computation time (see Figure~\ref{fig:dense_optimality_pct}). \LBBDG proves the most instances optimal at all $\alpha$ values (Figure~\ref{fig:dense_optimality_pct}) and delivers the best average gain over \ILPF at $\alpha \in \{0.25, 0.75, 1.0\}$ (averaging respectively 3.62\%, 4.90\% and 6.72\% across sets), tied with \Freecp at $\alpha = 0.5$ (averaging 4.48\% vs 4.52\%), as shown in Table~\ref{tab:summary-performance-dense-combined}. As in the standard case, the decomposition's edge becomes more pronounced on the larger instances. In terms of objective value, \Freecp again improves on \ILPC for the smaller $\alpha$ (e.g. 3.43\% versus 2.47\% in $\alpha = 0.25$), but the ordering reverses at $\alpha = 1.00$, where \ILPC (3.76\%) overtakes \Freecp (3.23\%).

\begin{figure}
    \centering
    \resizebox{0.75\linewidth}{!}{\begin{tikzpicture}[baseline=(current bounding box.south)]

\pgfplotsset{
  m freecp/.style    ={solid, color=green!60, fill opacity=0.45, line width=1pt, mark=none, no marks},
  m ilpcpws/.style   ={solid, color=orange!60, fill opacity=0.45, line width=1pt, mark=none, no marks},
  m ilpfsws/.style   ={solid, color=gray!60, fill opacity=0.45,  line width=1pt, mark=none, no marks},
  m lbbd/.style      ={solid, color=blue!60, fill opacity=0.45, line width=1pt, mark=none, no marks},
}

\newcommand{\Ntot}{140}

\begin{groupplot}[
  group style={group size=2 by 2, horizontal sep=1.4cm, vertical sep=1.6cm},
  width=0.45\textwidth, height=0.28\textwidth, scale only axis,
  xlabel={Time [s]}, ylabel={\% instances proven optimal},
  xmin=0, xmax=1800,
  xmode=log, log basis x=10,
  grid=major, no marks,
  legend cell align=left,
  legend style={
    at={(0.02,0.98)}, anchor=north west, font=\scriptsize,
    fill=white, fill opacity=0.9, text opacity=1, draw=black!30,
  },
  title style={font=\small},
]

\nextgroupplot[title={$\alpha = 0.25$}, ymin=-1, ymax=60, xlabel={}]
  \addplot[m freecp,  restrict x to domain=0:1800]
    table [x=t, y expr=\thisrow{freecp},            col sep=comma]
    {fig/tikz_data/cactus_dense_alpha25.csv};  \addlegendentry{\Freecp}
  \addplot[m ilpcpws, restrict x to domain=0:1800]
    table [x=t, y expr=\thisrow{ilp-cpws},          col sep=comma]
    {fig/tikz_data/cactus_dense_alpha25.csv};  \addlegendentry{\ILPC}
  \addplot[m ilpfsws, restrict x to domain=0:1800]
    table [x=t, y expr=\thisrow{ilp-fsws},          col sep=comma]
    {fig/tikz_data/cactus_dense_alpha25.csv};  \addlegendentry{\ILPF}
  \addplot[m lbbd,    restrict x to domain=0:1800]
    table [x=t, y expr=\thisrow{lbbd-wcp-wuc-cpws}, col sep=comma]
    {fig/tikz_data/cactus_dense_alpha25.csv};  \addlegendentry{\LBBDG}

\nextgroupplot[title={$\alpha = 0.5$}, ymin=-1, ymax=40, xlabel={}, ylabel={}]
  \addplot[m freecp,  restrict x to domain=0:1800]
    table [x=t, y expr=\thisrow{freecp},            col sep=comma]
    {fig/tikz_data/cactus_dense_alpha50.csv};
  \addplot[m ilpcpws, restrict x to domain=0:1800]
    table [x=t, y expr=\thisrow{ilp-cpws},          col sep=comma]
    {fig/tikz_data/cactus_dense_alpha50.csv};
  \addplot[m ilpfsws, restrict x to domain=0:1800]
    table [x=t, y expr=\thisrow{ilp-fsws},          col sep=comma]
    {fig/tikz_data/cactus_dense_alpha50.csv};
  \addplot[m lbbd,    restrict x to domain=0:1800]
    table [x=t, y expr=\thisrow{lbbd-wcp-wuc-cpws}, col sep=comma]
    {fig/tikz_data/cactus_dense_alpha50.csv};

\nextgroupplot[title={$\alpha = 0.75$}, ymin=-1, ymax=25]
  \addplot[m freecp,  restrict x to domain=0:1800]
    table [x=t, y expr=\thisrow{freecp},            col sep=comma]
    {fig/tikz_data/cactus_dense_alpha75.csv};
  \addplot[m ilpcpws, restrict x to domain=0:1800]
    table [x=t, y expr=\thisrow{ilp-cpws},          col sep=comma]
    {fig/tikz_data/cactus_dense_alpha75.csv};
  \addplot[m ilpfsws, restrict x to domain=0:1800]
    table [x=t, y expr=\thisrow{ilp-fsws},          col sep=comma]
    {fig/tikz_data/cactus_dense_alpha75.csv};
  \addplot[m lbbd,    restrict x to domain=0:1800]
    table [x=t, y expr=\thisrow{lbbd-wcp-wuc-cpws}, col sep=comma]
    {fig/tikz_data/cactus_dense_alpha75.csv};

\nextgroupplot[title={$\alpha = 1.0$}, ymin=-1, ymax=80, ylabel={}]
  \addplot[m freecp,  restrict x to domain=0:1800]
    table [x=t, y expr=\thisrow{freecp},            col sep=comma]
    {fig/tikz_data/cactus_dense_alpha100.csv};
  \addplot[m ilpcpws, restrict x to domain=0:1800]
    table [x=t, y expr=\thisrow{ilp-cpws},          col sep=comma]
    {fig/tikz_data/cactus_dense_alpha100.csv};
  \addplot[m ilpfsws, restrict x to domain=0:1800]
    table [x=t, y expr=\thisrow{ilp-fsws},          col sep=comma]
    {fig/tikz_data/cactus_dense_alpha100.csv};
  \addplot[m lbbd,    restrict x to domain=0:1800]
    table [x=t, y expr=\thisrow{lbbd-wcp-wuc-cpws}, col sep=comma]
    {fig/tikz_data/cactus_dense_alpha100.csv};

\end{groupplot}
\end{tikzpicture}}
    \caption{Comparison of the percentage of solutions solved to optimality over time for each method in dense sets.}
    \label{fig:dense_optimality_pct}
\end{figure}

\begin{table}[ht]
\centering
\begin{footnotesize}
\begin{tabular}{|r|rrr|rrr|}
\toprule
    & \Freecp & \ILPC & \LBBDG & \Freecp & \ILPC & \LBBDG \\
\cmidrule{2-7}
Set & \multicolumn{3}{c|}{$\alpha = 0.25$} & \multicolumn{3}{c|}{$\alpha = 0.5$} \\
\midrule
1 & \textbf{0.00} & \textbf{0.00} & \textbf{0.00} & \textbf{0.00} & \textbf{0.00} & \textbf{0.00} \\
2 & \textbf{0.00} & \textbf{0.00} & \textbf{0.00} & \textbf{0.00} & \textbf{0.00} & \textbf{0.00} \\
3 & \textbf{0.00} & \textbf{0.00} & \textbf{0.00} & \textbf{0.00} & \textbf{0.00} & \textbf{0.00} \\
4 & \textbf{1.64} & 1.39 & 0.86 & 2.86 & \textbf{3.40} & 2.34 \\
5 & 7.29 & 5.60 & \textbf{10.41} & 12.65 & 6.69 & \textbf{12.82} \\
6 & 7.83 & 5.92 & \textbf{8.56} & 8.32 & 5.11 & \textbf{9.64} \\
7 & \textbf{7.22} & 4.36 & 5.51 & \textbf{7.81} & 4.73 & 6.59 \\
\textit{Mean} & \textit{3.43} & \textit{2.47} & \textit{\textbf{3.62}} & \textit{\textbf{4.52}} & \textit{2.85} & \textit{4.48} \\
\midrule
  & \multicolumn{3}{c|}{$\alpha = 0.75$} & \multicolumn{3}{c|}{$\alpha = 1.0$} \\
\midrule
1 & \textbf{0.00} & \textbf{0.00} & \textbf{0.00} & \textbf{0.00} & \textbf{0.00} & \textbf{0.00} \\
2 & \textbf{0.02} & 0.00 & -0.01 & -1.72 & \textbf{0.00} & \textbf{0.00} \\
3 & \textbf{1.37} & 1.26 & 1.15 & -3.57 & 0.25 & \textbf{0.25} \\
4 & \textbf{3.42} & 3.38 & 3.35 & 4.17 & 8.24 & \textbf{10.09} \\
5 & 9.30 & 7.25 & \textbf{12.63} & 5.95 & 6.99 & \textbf{11.73} \\
6 & 8.87 & 4.85 & \textbf{11.11} & 8.73 & 5.37 & \textbf{15.32} \\
7 & \textbf{8.87} & 4.61 & 6.10 & 9.08 & 5.44 & \textbf{9.63} \\
\textit{Mean} & \textit{4.55} & \textit{3.05} & \textit{\textbf{4.90}} & \textit{3.23} & \textit{3.76} & \textit{\textbf{6.72}} \\
\bottomrule
\end{tabular}%
\end{footnotesize}
\caption{Performance comparison relative to ILP-fsws in dense instances across different $\alpha$ values. The results are given as percentages. The best approach per set is reported in bold.}
\label{tab:summary-performance-dense-combined}
\end{table}

\paragraph{Number of cuts generated}
Table~\ref{tab:nbr_lbbd_cuts} reports the average number of feasibility and optimality cuts added during the search by \LBBDG. Interestingly, two regularities emerge. First, feasibility cuts are consistently rare, with on average between 1.47 and 3.72 per instance. It means that the decomposition only seldom eliminates a master solution because its subproblem is infeasible. Second, optimality cuts are more numerous but decrease steeply with $\alpha$, from 72.51 at $\alpha = 0.25$ to 7.58 at $\alpha = 1.00$, mirroring the diminishing weight of the makespan in the objective and hence the diminishing amount of refinement the master needs. Therefore, few cuts are injected into the master, which is one reason for the strong performance of \LBBDG, especially when $\alpha$ is close to 1.
Finally, we defer the sparse sets to the next subsection: as detailed there, in the large majority of sparse instances the LBBD approach cannot certify the exactness of its solution, so they fall outside the optimality-guaranteed pool considered here.

\subsection{Comparison \LBBD\,--- \ILP\,--- \Freecp}

In this section, let us reintroduce the instances in which the \LBBD approach may not guarantee optimality of the solution.
Table~\ref{tab:percentage_guaranteed_optimal} shows how unevenly this affects the three densities: optimality is certified on all dense instances and on between 96.9\% and 99.7\% of the standard ones, but on only 15\% to 20\% of the sparse instances when $\alpha \leq 0.75$.
This behavior can be explained by the fact that in standard, and even more in dense ones, the constraints~\eqref{eq:cp-fixed-ee} significantly reduce the possibilities that the other tasks can take. Thus, in sparse instances, the \emph{subproblem} cannot close the optimal gap before the time limit. It is hardly due to the \emph{subproblem} struggling with its feasibility decidability. Indeed, the percentage of instances without an optimality guarantee plummets to zero when the \emph{subproblem} is a feasibility problem, i.e., when $\alpha = 1$.
Table~\ref{tab:non-opt-subproblems} displays the average percentage of \emph{subproblems} that do not prove optimality, per set, and $\alpha$. As expected, the larger the instance, the more of them fail in it. 
Because so few standard and dense instances lack a guarantee, the behavior of \LBBD on those densities coincides with the \LBBDG analysis of Section~\ref{section:analysis_lbbdg}, to which we refer the reader.

\begin{table}
\centering
\begin{minipage}[b]{0.48\textwidth}
\centering
\begin{small}
\begin{tabular}{|l|r|r|}
\toprule
 $\alpha$ & Avg optimality cuts & Avg feasibility cuts \\
\midrule
0.25 & 72.51 & 2.75 \\
0.50 & 63.44 & 3.72 \\
0.75 & 16.31 & 1.47 \\
1.00 & $7.58^\ddagger$ & $1.68^\ddagger$ \\
\bottomrule
\end{tabular}
\end{small}
\caption{Average number of \LBBDG optimality and feasibility cuts in the standard instances. $\ddagger$: for $\alpha=1$, no cut is added; instead, the optimality cuts represent the number of subproblems in which a feasible solution was found.}
\label{tab:nbr_lbbd_cuts}
\end{minipage}
\hfill
\begin{minipage}[b]{0.48\textwidth}
\centering
\begin{small}
\begin{tabular}{|c|rrr|}
\toprule
    \multirow{2}{*}{$\alpha$}  & \multicolumn{3}{c|}{\makecell{\% guaranteed optimal solutions}} \\
\cmidrule{2-4}
         & Sparse & Standard & Dense    \\
\midrule
    0.25 & 15.0   &  96.90 & 100.0  \\
    0.50 & 19.0   &  97.25 & 100.0  \\
    0.75 & 20.0   &  98.98 & 100.0  \\
    1.00 & 100.0  &  99.67 & 100.0  \\
\bottomrule
\end{tabular}
\end{small}
\caption{Percentage of solutions guaranteed optimal across instance densities under a 20s \emph{subproblem} time limit. A solution is guaranteed optimal if none of its \emph{subproblems} reached the time limit.}
\label{tab:percentage_guaranteed_optimal}
\end{minipage}
\end{table}

\begin{table}[ht]
  \centering
  \begin{small}
  \setlength{\tabcolsep}{4pt}
  \makebox[\textwidth][c]{
  \begin{tabular}{|c|ccccccccccccccccccc|}
    \toprule
    \multirow{2}{*}{$\alpha$} & \multicolumn{19}{c|}{Set} \\
    \cmidrule(lr){2-20}
         & 2    & 3    & 4    & 5    & 6    & 7    & 8    & 9    & 10   & 11   & 12   & 13   & 14   & 15   & 16   & 17   & 18   & 19   & 20   \\
    \midrule
    0.25 & 4.5  & 58.9 & 59.8 & 80.8 & 90.5 & 71.8 & 66.3 & 93.6 & 78.5 & 95.9 & 92.5 & 95.5 & 88.5 & 95.3 & 95.3 & 94.5 & 88.3 & 88.6 & 86.7 \\
    0.50 & - & 68.9 & 38.3 & 78.0 & 72.3 & 59.9 & 55.0 & 81.4 & 70.9 & 79.9 & 94.7 & 93.3 & 93.6 & 83.5 & 75.9 & 84.2 & 87.2 & 78.6 & 78.3 \\
    0.75 & 12.5 & 64.5 & 36.5 & 63.6 & 69.6 & 30.1 & 39.6 & 49.4 & 62.3 & 58.1 & 42.1 & 84.6 & 74.9 & 72.4 & 61.4 & 69.1 & 78.8 & 66.5 & 80.3 \\
    \bottomrule
  \end{tabular}}
  \end{small}
  \caption{Share of \LBBD subproblems which reached the 20s-subproblem time limit per sparse instance set and per $\alpha$ value, restricted to sparse instances for which at least one subproblem has timed out.}
  \label{tab:non-opt-subproblems}
\end{table}

\paragraph{Sparse instances} In sparse instances, \LBBD tends to be better than the other approaches in terms of objective value only when $\alpha = 1$ (12.46\% gain over \ILPF, versus 9.72\% for \Freecp), as shown in Table~\ref{tab:summary-performance-sparse-combined}. In the other cases, the \Freecp approach is more efficient, reaching gains of 21.13\%, 25.49\%, 12.73\% at $\alpha = 0.25, 0.50\ \text{and}\ 0.75$, respectively. \LBBD is also the method that converges the fastest to a solution it considers ``optimal'' (which may not be the true global optimum), as shown in Figure~\ref{fig:sparse_optimality_pct}.

\begin{figure}
    \centering
    \resizebox{0.75\linewidth}{!}{\begin{tikzpicture}[baseline=(current bounding box.south)]

\pgfplotsset{
  m freecp/.style    ={solid, color=green!60, fill opacity=0.45, line width=1pt, mark=none, no marks},
  m ilpcpws/.style   ={solid, color=orange!60, fill opacity=0.45, line width=1pt, mark=none, no marks},
  m ilpfsws/.style   ={solid, color=gray!60, fill opacity=0.45,  line width=1pt, mark=none, no marks},
  m lbbd/.style      ={solid, color=blue!60, fill opacity=0.45, line width=1pt, mark=none, no marks},
}

\newcommand{\Ntot}{200}

\begin{groupplot}[
  group style={group size=2 by 2, horizontal sep=1.4cm, vertical sep=1.6cm},
  width=0.45\textwidth, height=0.28\textwidth, scale only axis,
  xlabel={Time [s]}, ylabel={\% instances proven optimal},
  xmin=0, xmax=1800,
  xmode=log, log basis x=10,
  grid=major, no marks,
  legend cell align=left,
  legend style={
    at={(0.02,0.98)}, anchor=north west, font=\scriptsize,
    fill=white, fill opacity=0.9, text opacity=1, draw=black!30,
  },
  title style={font=\small},
]

\nextgroupplot[title={$\alpha = 0.25$}, ymin=-1, ymax=40, xlabel={}]
  \addplot[m freecp,  restrict x to domain=0:1800]
    table [x=t, y expr=\thisrow{freecp},            col sep=comma]
    {fig/tikz_data/cactus_sparse_alpha25.csv};  \addlegendentry{\Freecp}
  \addplot[m ilpcpws, restrict x to domain=0:1800]
    table [x=t, y expr=\thisrow{ilp-cpws},          col sep=comma]
    {fig/tikz_data/cactus_sparse_alpha25.csv};  \addlegendentry{\ILPC}
  \addplot[m ilpfsws, restrict x to domain=0:1800]
    table [x=t, y expr=\thisrow{ilp-fsws},          col sep=comma]
    {fig/tikz_data/cactus_sparse_alpha25.csv};  \addlegendentry{\ILPF}
  \addplot[m lbbd,    restrict x to domain=0:1800]
    table [x=t, y expr=\thisrow{lbbd-wcp-wuc-cpws}, col sep=comma]
    {fig/tikz_data/cactus_sparse_alpha25.csv};  \addlegendentry{\LBBD}

\nextgroupplot[title={$\alpha = 0.5$}, ymin=-1, ymax=45, xlabel={}, ylabel={}]
  \addplot[m freecp,  restrict x to domain=0:1800]
    table [x=t, y expr=\thisrow{freecp},            col sep=comma]
    {fig/tikz_data/cactus_sparse_alpha50.csv};
  \addplot[m ilpcpws, restrict x to domain=0:1800]
    table [x=t, y expr=\thisrow{ilp-cpws},          col sep=comma]
    {fig/tikz_data/cactus_sparse_alpha50.csv};
  \addplot[m ilpfsws, restrict x to domain=0:1800]
    table [x=t, y expr=\thisrow{ilp-fsws},          col sep=comma]
    {fig/tikz_data/cactus_sparse_alpha50.csv};
  \addplot[m lbbd,    restrict x to domain=0:1800]
    table [x=t, y expr=\thisrow{lbbd-wcp-wuc-cpws}, col sep=comma]
    {fig/tikz_data/cactus_sparse_alpha50.csv};

\nextgroupplot[title={$\alpha = 0.75$}, ymin=-1, ymax=75]
  \addplot[m freecp,  restrict x to domain=0:1800]
    table [x=t, y expr=\thisrow{freecp},            col sep=comma]
    {fig/tikz_data/cactus_sparse_alpha75.csv};
  \addplot[m ilpcpws, restrict x to domain=0:1800]
    table [x=t, y expr=\thisrow{ilp-cpws},          col sep=comma]
    {fig/tikz_data/cactus_sparse_alpha75.csv};
  \addplot[m ilpfsws, restrict x to domain=0:1800]
    table [x=t, y expr=\thisrow{ilp-fsws},          col sep=comma]
    {fig/tikz_data/cactus_sparse_alpha75.csv};
  \addplot[m lbbd,    restrict x to domain=0:1800]
    table [x=t, y expr=\thisrow{lbbd-wcp-wuc-cpws}, col sep=comma]
    {fig/tikz_data/cactus_sparse_alpha75.csv};

\nextgroupplot[title={$\alpha = 1.0$}, ymin=-1, ymax=100, ylabel={}]
  \addplot[m freecp,  restrict x to domain=0:1800]
    table [x=t, y expr=\thisrow{freecp},            col sep=comma]
    {fig/tikz_data/cactus_sparse_alpha100.csv};
  \addplot[m ilpcpws, restrict x to domain=0:1800]
    table [x=t, y expr=\thisrow{ilp-cpws},          col sep=comma]
    {fig/tikz_data/cactus_sparse_alpha100.csv};
  \addplot[m ilpfsws, restrict x to domain=0:1800]
    table [x=t, y expr=\thisrow{ilp-fsws},          col sep=comma]
    {fig/tikz_data/cactus_sparse_alpha100.csv};
  \addplot[m lbbd,    restrict x to domain=0:1800]
    table [x=t, y expr=\thisrow{lbbd-wcp-wuc-cpws}, col sep=comma]
    {fig/tikz_data/cactus_sparse_alpha100.csv};

\end{groupplot}
\end{tikzpicture}}
    \caption{Comparison of the percentage of solutions solved to optimality\textsuperscript{$\ddagger$} over time for each method in sparse sets. $\ddagger$: Note that, in the case of \LBBD, this implies that the solver certifies the optimality of the incumbent solution; however, due to the inefficacy of the cuts generated in the \emph{subproblem}, this solution may in fact not correspond to the true global optimum.}
    \label{fig:sparse_optimality_pct}
\end{figure}

\begin{table}[ht]
\centering
\begin{footnotesize}
\begin{tabular}{|r|rrr|rrr|}
\toprule
    & \Freecp & \ILPC & \LBBD & \Freecp & \ILPC & \LBBD \\
\cmidrule{2-7}
Set & \multicolumn{3}{c|}{$\alpha = 0.25$} & \multicolumn{3}{c|}{$\alpha = 0.5$} \\
\midrule
1 & \textbf{0.00} & \textbf{0.00} & \textbf{0.00} & \textbf{0.00} & \textbf{0.00} & \textbf{0.00} \\
2 & \textbf{0.32} & 0.12 & \textbf{0.32} & \textbf{0.00} & \textbf{0.00} & \textbf{0.00} \\
3 & 0.97 & 0.55 & \textbf{1.18} & \textbf{0.39} & \textbf{0.00} & 0.35 \\
4 & 4.29 & 3.85 & \textbf{4.55} & \textbf{0.83} & 0.57 & 0.81 \\
5 & \textbf{3.79} & 2.71 & 3.75 & 3.03 & 2.61 & \textbf{3.44} \\
6 & \textbf{8.46} & 5.60 & 8.33 & 5.36 & 3.74 & \textbf{6.03} \\
7 & 10.99 & 10.45 & \textbf{11.67} & 7.60 & 6.34 & \textbf{8.28} \\
8 & 18.94 & 19.27 & \textbf{20.85} & 10.78 & 16.03 & \textbf{17.22} \\
9 & 27.45 & 27.68 & \textbf{29.40} & 29.00 & \textbf{32.73} & 29.59 \\
10 & \textbf{22.87} & 21.11 & 22.64 & \textbf{25.90} & 25.38 & 15.73 \\
11 & \textbf{25.52} & 23.68 & 24.91 & 26.86 & 25.52 & \textbf{26.99} \\
12 & \textbf{23.07} & 12.50 & 20.18 & \textbf{24.08} & 23.85 & 22.62 \\
13 & \textbf{35.58} & 27.93 & 30.58 & \textbf{47.13} & 34.91 & 36.90 \\
14 & \textbf{39.45} & 31.97 & 33.92 & \textbf{56.63} & 41.71 & 44.84 \\
15 & 31.91 & 27.47 & \textbf{34.45} & \textbf{41.04} & 33.97 & 29.76 \\
16 & \textbf{33.15} & 19.83 & 27.61 & \textbf{45.66} & 22.26 & 32.82 \\
17 & \textbf{40.40} & 35.37 & 35.85 & \textbf{53.35} & 44.11 & 36.18 \\
18 & \textbf{29.08} & 15.84 & 17.76 & \textbf{45.48} & 29.48 & 26.45 \\
19 & \textbf{37.63} & 21.08 & 25.38 & \textbf{44.26} & 25.38 & 27.48 \\
20 & \textbf{28.79} & 21.05 & 27.94 & \textbf{42.36} & 32.99 & 41.18 \\
\textit{Mean} & \textit{\textbf{21.13}} & \textit{16.40} & \textit{19.06} & \textit{\textbf{25.49}} & \textit{20.08} & \textit{20.33} \\
\midrule
    & \multicolumn{3}{c|}{$\alpha = 0.75$} & \multicolumn{3}{c|}{$\alpha = 1.0$} \\
\midrule
1 & \textbf{0.00} & \textbf{0.00} & \textbf{0.00} & \textbf{0.00} & \textbf{0.00} & \textbf{0.00} \\
2 & \textbf{0.00} & \textbf{0.00} & \textbf{0.00} & \textbf{0.00} & \textbf{0.00} & \textbf{0.00} \\
3 & \textbf{0.00} & \textbf{0.00} & \textbf{0.00} & \textbf{0.00} & \textbf{0.00} & \textbf{0.00} \\
4 & \textbf{0.58} & 0.45 & 0.53 & \textbf{0.00} & \textbf{0.00} & \textbf{0.00} \\
5 & -0.26 & 0.03 & \textbf{0.69} & \textbf{0.00} & \textbf{0.00} & \textbf{0.00} \\
6 & \textbf{4.17} & 3.22 & 4.15 & \textbf{0.00} & \textbf{0.00} & \textbf{0.00} \\
7 & 0.13 & 0.64 & \textbf{1.14} & \textbf{0.00} & \textbf{0.00} & \textbf{0.00} \\
8 & -0.15 & \textbf{0.64} & 0.04 & \textbf{0.00} & \textbf{0.00} & \textbf{0.00} \\
9 & -29.03 & -30.91 & \textbf{-28.59} & -1.12 & \textbf{0.00} & \textbf{0.00} \\
10 & \textbf{21.88} & 19.41 & -1.68 & -2.83 & \textbf{0.00} & \textbf{0.00} \\
11 & 26.40 & 24.68 & \textbf{26.63} & -0.28 & \textbf{0.00} & \textbf{0.00} \\
12 & \textbf{15.81} & 12.66 & -12.15 & -0.01 & \textbf{0.00} & \textbf{0.00} \\
13 & 22.36 & \textbf{43.44} & 33.71 & 3.07 & 13.42 & \textbf{13.42} \\
14 & \textbf{29.10} & 24.54 & 28.25 & 13.68 & 0.55 & \textbf{19.22} \\
15 & 25.38 & 31.13 & \textbf{46.80} & 33.49 & 28.31 & \textbf{36.96} \\
16 & \textbf{28.07} & 15.64 & 28.07 & 9.83 & -4.80 & \textbf{15.71} \\
17 & \textbf{29.31} & 18.71 & 18.99 & 30.34 & 28.54 & \textbf{34.36} \\
18 & \textbf{15.80} & 13.74 & 14.62 & 34.10 & 33.34 & \textbf{44.05} \\
19 & 37.85 & 31.20 & \textbf{39.30} & 46.91 & 41.49 & \textbf{53.46} \\
20 & 27.16 & 31.14 & \textbf{35.01} & 27.16 & 27.75 & \textbf{32.11} \\
\textit{Mean} & \textit{\textbf{12.73}} & \textit{12.02} & \textit{11.78} & \textit{9.72} & \textit{8.43} & \textit{\textbf{12.46}} \\
\bottomrule
\end{tabular}%
\end{footnotesize}
\caption{Performance comparison relative to ILP-fsws in sparse instances across different $\alpha$ values. The results are given as percentages. The best approach per set is reported in bold.}
\label{tab:summary-performance-combined}
\label{tab:summary-performance-sparse-combined}
\end{table}

\paragraph{Statistical analysis} To put these comparisons on a firmer footing, Table~\ref{tab:wilcoxon} reports Holm-corrected $p$-values from one-sided paired Wilcoxon signed-rank tests \citep{wilcoxon1945individual, holm1979simple} comparing \LBBD to each other method on the per-instance objective. Against \ILPC and \ILPF, \LBBD is significantly better ($p<0.001$) in all density, establishing uniform dominance over the ILP baselines throughout the experimental grid. The comparison against \Freecp is more nuanced. \LBBD is statistically better ($p<0.001$) than \Freecp for all densities when $\alpha = 1$. In sparse and dense instances, the advantage of \LBBD tends to decrease when $\alpha$ lowers, so in those instances \LBBD and \Freecp are statistically comparable in objective value, and in sparse sets when $\alpha \leq 0.5$ \Freecp even achieves the higher mean gains (Table~\ref{tab:summary-performance-sparse-combined}). In standard instances, the Wilcoxon test reveals that in the case $\alpha = 0.5$ \LBBD is not superior to \Freecp, as Table~\ref{tab:summary-performance-combined-standard} also shows, but for the other values of $\alpha$, it appears to be the best approach.

Taken together, these results identify \LBBD as the strongest method whenever a certified optimum is attainable and, at the objective value, wherever the makespan leaves the objective: its advantage over \Freecp is concentrated at $\alpha = 1.0$ in all densities (and also at $\alpha \in \{0.25, 0.75\}$ in standard sets), while for smaller $\alpha$ in the sparse instances the monolithic CP remains competitive and is frequently the better choice.

\begin{table}[ht!]
\centering
\begin{small}
\begin{tabular}{|l|c||rrr|}
\toprule
Density & $\alpha$ & \texttt{freecp} & \texttt{ILP-cpws} & \texttt{ILP-fsws} \\
\midrule
   Sparse & 0.25 & \emph{1.000} & $<\!0.001$ & $<\!0.001$ \\
    & 0.50 & \emph{0.896} & $<\!0.001$ & $<\!0.001$ \\
    & 0.75 & \emph{0.341} & $<\!0.001$ & $<\!0.001$ \\
    & 1.00 & $<\!0.001$ & $<\!0.001$ & $<\!0.001$ \\
\cmidrule(lr){1-5}
   Standard & 0.25 & 0.005 & $<\!0.001$ & $<\!0.001$ \\
    & 0.50 & \emph{0.998} & $<\!0.001$ & $<\!0.001$ \\
    & 0.75 & 0.067 & $<\!0.001$ & $<\!0.001$ \\
    & 1.00 & $<\!0.001$ & $<\!0.001$ & $<\!0.001$ \\
\cmidrule(lr){1-5}
   Dense & 0.25 & \emph{0.770} & $<\!0.001$ & $<\!0.001$ \\
    & 0.50 & \emph{0.130} & $<\!0.001$ & $<\!0.001$ \\
    & 0.75 & \emph{0.131} & $<\!0.001$ & $<\!0.001$ \\
    & 1.00 & $<\!0.001$ & $<\!0.001$ & $<\!0.001$ \\
\bottomrule
\end{tabular}
\end{small}
\caption{Pairwise comparison of \LBBD against each of the three competitors on the per-instance objective. Each cell reports the $p$-value of a one-sided paired Wilcoxon signed-rank test~\citep{wilcoxon1945individual}, the pairing being by instance: instances on which the two methods returned the same objective are dropped, the remaining differences are ranked by magnitude, and the alternative is $H_1$: \LBBD produces a strictly smaller (better) objective than the competitor. Within each density$\,\times\,\alpha$ row, the three resulting $p$-values are then Holm-corrected \citep{holm1979simple} to control the family-wise error rate across the three competitors. A value $p = 0.1$ therefore indicates that, after multiplicity correction, \LBBD is significantly better than the corresponding competitor at the 10\,\% level on the underlying instance pool. The values for which no significant statistical correlation is found are reported in italic.}
\label{tab:wilcoxon}
\end{table}

\subsection{Discussion on results}

In summary, the LBBD approach is the best approach by far, in terms of solution quality and computation time, when $\alpha = 1$, i.e., when \textit{subproblem} collapses to a feasibility oracle. In this case, no matter the density of the instance, the CP modeling the subproblem always decides the feasibility before the time limit, meaning the optimal solution yielded is guaranteed to be the global optimum.
Experiments show that LBBD approach can solve to optimality instances up to 480 tasks whose around 86 of them are energy-intensive, and a time horizon of up to 6207.
It remains a relevant approach in dense and standard instances when $\alpha < 1$, in which it nearly often guarantees the validity of its optimal solution, and beats the other approaches in terms of computation speed and solution quality, with an exception for $\alpha = 0.5$ in standard instances and $\alpha = 0.25$ in dense instances, in which \Freecp matches the LBBD.
Finally, in sparse instances, $\Freecp$ seems to be the best approaches when $\alpha < 1$, because most of the LBBD subproblems timeout, hampering significantly the exploration of search space, potentially pruning optimal solutions while considerably slowing down the master search tree exploration.

\section{Generalization to different resource environments}
\label{section:generalization-of-LBBD}

In this section, we examine the applicability of the proposed decomposition scheme to additional energy-aware optimization problems that arise in heterogeneous machine environments. 
Specifically, we demonstrate how our method can be adapted to two further scheduling problems: the Resource-Constrained Project Scheduling Problem (RCPSP) with blocking times under a total weighted tardiness objective, and the Flexible Job Shop Scheduling Problem. 
In both cases, scheduling problems are integrated with a time-of-use electricity tariff structure and machine state modeling. We then apply the proposed decomposition framework to each of these combined problem settings.
In the following, we detail only the subproblems, since the master problem is technically unchanged. Hence, the notations defined in Section~\ref{section:problem_statement} still hold.
The time limit for the subproblem is still set at $20$ seconds.

\subsection{Subproblem: RCPSP with blocking times \& total weighted tardiness}
\label{subsection:RCPSP-with-blocking-time}

In this section, we investigate another variant of the RCPSP, $PSm|intree|\sum_jw_jT_j$, detailed in \cite{nedbalek2025bottleneck}, which serves as the subproblem. In it, each task $j\in\mathcal{T}$ has a processing duration $p_j$ and a due date $d_j$. We define the tardiness weight $w_j$ as the penalty for each task $j$ that is late in a given period of time. The precedence constraints between tasks are represented by a graph $G=(V,A)$. Task preemption is not permitted.
Tasks are assigned to resources $\mathcal{R} = \{R_0,\dots,R_m\}$ with time-variant renewable capacities, i.e., at any time $t \in \mathcal{I}$, the capacity of a resource $k$ is $\rho_k^{(t)}$. We assume that $\rho_k^{(t)} \in \{0, r_k\}$ with $r_k > 0$, except for the energy-aware resource, which has a constant capacity of $1$ at any time. At any time $t \in \mathcal{I}$, a task $j$ would consume a quantity $r_{jk}^{(t)}$ of resource $k$ if processed. The subproblem can be modeled by the following constraint programming formulation:

\begin{small}
\begin{align}
    (Subproblem):\ \min&\quad \sum_{j \in \mathcal{T}} w_j \max \left\{\textsc{EndOf}(\xi_j) - d_j, 0\right\} &\\
    s.t. &\quad \sum_{j \in \mathcal{T}} \textsc{Pulse}\left(\xi_j, r_{jk}^{(t)}\right) \leq \rho_k^{(t)} &\forall t \in \mathcal{I}, k \in \mathcal{R} \\
    &\quad \textsc{EndBeforeStart}(\xi_i,\xi_j) &\forall (i, j) \in \mathcal{T}^2,\ i \prec j \\
    &\quad \textsc{StartOf}(\xi_j) = t &\forall j \in \mathcal{J}, t\in\mathcal{I},\hat{x}_{jt} = 1 \\
    &\quad \textsc{Interval}\ \xi_j,\ \textsc{LengthOf}(\xi_j) = p_j &\forall j \in \mathcal{T}
\end{align}
\end{small}

We compare our cold-started LBBD approach with a monolithic CP formulation derived from the CP proposed in Section~\ref{sec:constraint_programming} on a small set of instances derived from the dataset\footnote{The dataset can be retrieved from \cite{DVN/XI9JE0_2025}} proposed by \cite{nedbalek2025bottleneck}. For each instance, we randomly select 20\% of the tasks to be energy-intensive.
Table~\ref{tab:performance-blocking-twt} presents, for different values of $\alpha$, the minimal, maximal, and average gaps~\eqref{eq:delta} of the LBBD approach relative to the CP over the data set and the associated standard deviations.
It reveals that our approach exhibits superior performance in terms of identifying optimal solutions compared to the CP for all instances, with an average improvement of 7.43\%, with a maximum reaching nearly 50\%. It is also worth noting that the LBBD never performs worse than the CP.
Also, no subproblems reached the time limit, there were all solved to optimality.
Furthermore, Figure~\ref{fig:time-blocking-twt} depicts how quickly our LBBD approach proves the optimality of solutions, with nearly 90\% of instances solved to optimality in less than half an hour, whereas the CP could only prove the optimality of less than 5\% of them in the same time.

\begin{figure}[]
\centering
\begin{minipage}[b]{0.45\textwidth}
    \centering
    \begin{tabular}{|lrrrr|}
    \toprule
    & \multicolumn{4}{c|}{Relative to CP [\%]}\\
    \cmidrule(lr){2-5}
    $\alpha$ & mean & min & max & std \\
    \midrule
    0.25 & 8.09 & 0.00 & 36.00 & 10.67 \\
    0.50 & 6.64 & 0.00 & 35.93 & 9.42 \\
    0.75 & 7.80 & 0.00 & 48.70 & 11.61 \\
    1.00 & 7.17 & 0.00 & 30.81 & 9.02 \\
    \bottomrule
    \end{tabular}
    \captionof{table}{RCPSP with blocking times \& total weighted tardiness - Performance comparison for various values of $\alpha$ of the LBBD approach relative to the monolithic CP presented in Section~\ref{subsection:RCPSP-with-blocking-time}. A time limit of 1800 seconds is imposed for both methods. std = standard deviation.}
    \label{tab:performance-blocking-twt}
\end{minipage}
\hfill
\begin{minipage}[b]{0.45\textwidth}
    \centering
    \begin{tikzpicture}
\begin{axis}[
  width=1\textwidth, height=0.7\textwidth,
  xlabel={time [s]},
  ylabel={solved to optimality [\%]},
  ymin=0,
  grid=major,
  legend pos=north west,
  legend cell align=left,
  cycle list name=color list,
  xmode=log,
  log basis x=10           
]

\pgfplotsset{
  mybox A/.style={solid,  draw=blue!60,  fill opacity=0.45},
  mybox B/.style={solid,   draw=red!60,   fill opacity=0.45},
  mybox C/.style={solid,  draw=green!60, fill opacity=0.45},
  mybox D/.style={solid,draw=orange!60,fill opacity=0.45},
  mybox E/.style={solid,draw=gray!60,fill opacity=0.45},
}

  \addplot+[mybox A, very thick] table [x=time, y={benders}, col sep=comma] {fig/tikz_data/cactus_percent_combined_problem1.csv};
  \addlegendentry{LBBD}

  \addplot+[mybox D, very thick] table [x=time, y=freecp, col sep=comma] {fig/tikz_data/cactus_percent_combined_problem1.csv};
  \addlegendentry{CP}

\end{axis}
\end{tikzpicture}
    \caption{RCPSP with blocking times \& total weighted tardiness - Proportion of instances solved to optimality by each method over time.  The results for each value of $\alpha\in\{0.25,0.5,0.75,1.0\}$ are concatenated.}
    \label{fig:time-blocking-twt}
\end{minipage}
\end{figure}

\subsection{Subproblem: Flexible job shop}
\label{subsection:fjsp}

In this subsection, we consider a classical flexible job shop problem (FJSP) $FJm||C_{\max}$ \citep{dauzere2024flexible}. Let $\mathcal{M}$ be a set of $m$ machines. Each machine cannot handle more than one job at a time. We have a set of $n$ jobs $\mathfrak{J}$. Each job $i \in \mathfrak{J}$ comprises a chain $\mathcal{O}_i$ of operations $o_{i\lambda}$, so the set of all operations is $\mathcal{T} = \bigcup_{i \in \mathfrak{J}} \mathcal{O}_i$. Each operation $i$ can be processed by any machine in a subset of machines $\mathcal{M}_i \subseteq \mathcal{M}$.
We use the MILP detailed in \cite{dauzere2024flexible}. Let us consider binary variables $\alpha_{i}^k = 1$ when operation $i$ starts on machine $k$ and $\beta_{ij} = 1$ when operation $i$ is sequenced before $j$, and continuous variables $C_{\max}$ and $t_i$, the MILP subproblem is:

\begin{small}
\begin{align}
    (Subproblem):\ \min&\ C_{\max} & \\
    s.t.&\ \sum_{k \in \mathcal{M}_i} \alpha_i^k = 1 & \forall i \in \mathcal{T} \\
        &\ t_i \geq t_{pr(i)} + \sum_{k \in \mathcal{M}_{pr(i)}}p^m_{pr(i)}\alpha^k_{pr(i)} &\forall i \in \mathcal{T} \\
        &\ t_i \geq t_{j} + p_{j}^k - M \left(2 - \alpha_i^k - \alpha_{j}^k + \beta_{ij}\right) & \forall (i,j)\in\mathcal{T}^2,\ \text{if}\ i \not= j, k \in \mathcal{M}_i \cap \mathcal{M}_{j} \\
        &\ t_{j} \geq t_i + p_i^k - M \left(3 - \alpha^k_i - \alpha^k_{j} - \beta_{ij}\right) & \forall (i,j)\in\mathcal{T}^2,\ \text{if}\ i \not= j, k \in \mathcal{M}_i \cap \mathcal{M}_{j} \\
        &\ C_{\max} \geq t_i + \sum_{k \in \mathcal{M}_i}p_i^k \alpha_i^k & \forall i \in \mathcal{T} \\
        &\ t_j = t & \forall j \in \mathcal{J}, t \in \mathcal{I}, \hat{x}_{jt} = 1
\end{align}
\end{small}
The objective function of the master is normalized as in \eqref{eq:full_obj}.
We compare LBBD with a monolithic MILP (see Appendix G of the supplementary materials) based on the time-indexed formulation of FJSP presented by \cite{ku2016mixed}. It is interesting to notice that the SPACES graph forces the monolithic ILP to follow a time-indexed formulation, while the subproblem of our LBBD can employ models that do not have to stick to this kind of formulation; in particular, the one we use in the subproblem does not. No warmstart solution is supplied to models.
We employ the IIS (Irreducible Inconsistent Subsystem)~\citep{gleeson1990Identifying} provided by the solver to determine $\mathcal{I}nf$.
The tests are performed on a dataset\footnote{The dataset can be retrieved from the \cite{DVN/AXMPOS_2025}} created from the \cite{brandimarte1993routing} and \cite{fattahi2010dynamic} datasets, in which around 20\% of the operations are labeled energy-intensive.
Figure~\ref{fig:problem2_feasible_sols} shows the number of instances in which LBBD and ILP have found an optimal solution for each dataset and each value of $\alpha$. We observe that ILP is better to prove optimality when $\alpha$ is low, while LBBD is always better when $\alpha = 1$. Although the methods' performances remain close, ILP seems slightly better at proving the optimality of a solution in the other cases, especially in the Brandimarte set.
However, a general trend tends to emerge when the size of the instances grows, in which the LBBD digs out a feasible solution more often than the monolithic MILP. Indeed, as shown in Figure~\ref{fig:problem2_feasible_sols}, in the 120 overall experiments run, with a time limit of 1800 seconds, the LBBD returned a feasible solution 107 times, while the monolithic ILP was only able to find one 83 times, i.e., around 20\% less.

\begin{figure}[t]
    \centering
\tikzset{
  mybox A/.style={solid, draw=blue!60,  fill=blue!60,  fill opacity=0.8}, 
  mybox Afeas/.style={solid, draw=blue!60,  fill=blue!30,  fill opacity=0.6}, 
  mybox E/.style={solid, draw=gray!60, fill=gray!60, fill opacity=0.8}, 
  mybox Efeas/.style={solid, draw=gray!60, fill=gray!30, fill opacity=0.6}, 
}

\tikzset{
  lbbd-opt/.style   ={mybox A},
  lbbd-feas/.style  ={mybox Afeas},
  lbbd-infeas/.style={mybox Afeas,
                      fill=blue!20, fill opacity=0.5,
                      pattern={Lines[angle=45,distance=3pt,line width=0.35pt]},
                      pattern color=blue!70},
  ilp-opt/.style    ={mybox E},
  ilp-feas/.style   ={mybox Efeas},
  ilp-infeas/.style ={mybox Efeas,
                      fill=gray!20, fill opacity=0.5,
                      pattern={Lines[angle=45,distance=3pt,line width=0.35pt]},
                      pattern color=gray!70},
}

\makeatletter
\newcommand\resetstackedplots{%
  \pgfplots@stacked@isfirstplottrue
  \addplot [forget plot,draw=none] coordinates{(1,0) (2,0) (3,0) (4,0) (5,0) (6,0) (7,0) (8,0)};
}
\makeatother

\begin{tikzpicture}
\begin{axis}[
    width=0.9\textwidth, height=0.25\textwidth,
    ybar stacked, ymin=0,
    enlarge x limits=0.2,
    xtick={1,2,3,4,5,6,7,8},
    xticklabels={
      {Brandimarte $\alpha=0.25$},
      {Brandimarte $\alpha=0.5$},
      {Brandimarte $\alpha=0.75$},
      {Brandimarte $\alpha=1.0$},
      {Fattahi $\alpha=0.25$},
      {Fattahi $\alpha=0.5$},
      {Fattahi $\alpha=0.75$},
      {Fattahi $\alpha=1.0$}
    },
    xticklabel style={rotate=25,anchor=east,yshift=-7pt,xshift=4pt,font=\footnotesize},
    ylabel={instances [-]},
    grid=both, grid style={dotted},
    legend style={at={(0.5,1.05)},anchor=south,legend columns=3,draw=none,font=\footnotesize},
    legend cell align=left,
    legend entries={
      LBBD proven optimal,
      LBBD proven feasible,
      LBBD no solution,
      ILP proven optimal,
      ILP proven feasible,
      ILP no solution
    },
]


\addplot +[bar shift=-.22cm, lbbd-opt] coordinates {
  (1,1) (2,1) (3,1) (4,8) (5,12) (6,13) (7,14) (8,16)
};

\addplot +[bar shift=-.22cm, lbbd-feas] coordinates {
  (1,7) (2,7) (3,8) (4,2) (5,7) (6,5) (7,5) (8,0)
};

\addplot +[bar shift=-.22cm, lbbd-infeas] coordinates {
  (1,2) (2,2) (3,1) (4,0) (5,1) (6,2) (7,1) (8,4)
};

\resetstackedplots


\addplot +[bar shift=.22cm, ilp-opt] coordinates {
  (1,2) (2,3) (3,3) (4,7) (5,13) (6,13) (7,12) (8,14)
};

\addplot +[bar shift=.22cm, ilp-feas] coordinates {
  (1,4) (2,3) (3,3) (4,0) (5,1) (6,2) (7,3) (8,0)
};

\addplot +[bar shift=.22cm, ilp-infeas] coordinates {
  (1,4) (2,4) (3,4) (4,3) (5,6) (6,5) (7,5) (8,6)
};

\end{axis}
\end{tikzpicture}
    \caption{Flexible Job Shop - Number of solutions with proven optimality and feasible solution found grouped by benchmark set and $\alpha$ value.}
    \label{fig:problem2_feasible_sols}
\end{figure}

\subsection{Additional remarks}

In the problems mentioned in Sections~\ref{subsection:RCPSP-with-blocking-time} and \ref{subsection:fjsp}, our LBBD approach preserves the optimality of the final solution, as long as the subproblem is solved to optimality each time it is triggered.
However, if this optimality criterion is not mandatory, approximation methods can be used to compute the subproblem. In particular, a wide variety of heuristics or metaheuristic approaches can be envisioned. 
They seem to be particularly suited for cases where the subproblem would be very complex but only checks the feasibility of the master solution.

\section{Conclusion}
\label{section:conclusion}

In this work, we have investigated the classical resource-constrained project scheduling problem extended with time-of-use energy tariffs and machine states. We proposed three models for this problem: two monolithic models employing \emph{ILP} and \emph{CP} approaches, and \emph{\texttt{LBBD}}. In the latter, the problem was divided into two problems: a master problem, solved by an ILP, solves the energy part of the original problem, while the RCPSP components are put aside in a subproblem. This subproblem is solved efficiently via a CP model. We assessed these models in a series of energy-intensive density instances. Outcomes revealed that in standard and dense instances, \texttt{LBBD} outperforms the other proposed approaches, with increasing margins when the instance size grows, solving to optimality instances with up to 480 tasks. In sparse instances, it is the monolithic CP that tends to yield the best results, as long as the makespan appears in the objective function. Otherwise, \texttt{LBBD} statistically retains its dominance in most cases.
Finally, we have demonstrated that \texttt{LBBD} is applicable to a broader class of problems, provided that they admit a decomposition into an energy management problem in which the start times of energy-intensive activities can be determined within the subproblem. We investigated two distinct subproblem formulations: a resource-constrained project scheduling problem (RCPSP) with blocking times and a total weighted tardiness objective, and a classical flexible job shop scheduling problem. In both cases, \texttt{LBBD} outperforms a traditional monolithic solution approach.

In further work, we can envision having multiple machines that require energy-intensive resources. These machines may operate on the same energy source or require different energy-constrained resources, each with its own TOU. 
Furthermore, we can also consider more than three states, but with multiple idle/processing times, as some tasks require a precise state to be performed. We would also suggest looking into the impact of temporary energy storage capacities, such as batteries, especially when combined with on-site energy production capacities (i.e., independent of time-of-use), such as solar panels or wind turbines.
Finally, another avenue of research would be to explore the combined effect of uncertainty of energy tariffs~\citep {XIECHEN2026107567} and state switching over longer time horizons.


\section*{Acknowledgments}
This work was co-funded by the European Union under the project ROBOPROX (reg. no. CZ.02.01. 01/00/22\_008/0004590) and by the Grant Agency of the Czech Republic under the Project GACR 25-17904S.



\bibliographystyle{apa}
\bibliography{refs}

\begin{thebibliography}{}

\bibitem[\protect\astroncite{Aghelinejad et~al.}{2018}]{aghelinejad2018production}
Aghelinejad, M., Ouazene, Y., and Yalaoui, A. (2018).
\newblock Production scheduling optimisation with machine state and time-dependent energy costs.
\newblock {\em International Journal of Production Research}, 56(16):5558--5575.

\bibitem[\protect\astroncite{Aghelinejad et~al.}{2019}]{aghelinejad2019complexity}
Aghelinejad, M., Ouazene, Y., and Yalaoui, A. (2019).
\newblock Complexity analysis of energy-efficient single machine scheduling problems.
\newblock {\em Operations Research Perspectives}, 6:100105.

\bibitem[\protect\astroncite{Artigues et~al.}{2013}]{artigues2013resource}
Artigues, C., Demassey, S., and Neron, E. (2013).
\newblock {\em Resource-constrained project scheduling: models, algorithms, extensions and applications}.
\newblock John Wiley \& Sons.

\bibitem[\protect\astroncite{Artigues et~al.}{2025}]{artigues2025fifty}
Artigues, C., Hartmann, S., and Vanhoucke, M. (2025).
\newblock Fifty years of research on resource-constrained project scheduling explored from different perspectives.
\newblock {\em European Journal of Operational Research}.

\bibitem[\protect\astroncite{Artigues et~al.}{2003}]{artigues2003insertion}
Artigues, C., Michelon, P., and Reusser, S. (2003).
\newblock Insertion techniques for static and dynamic resource-constrained project scheduling.
\newblock {\em European Journal of Operational Research}, 149(2):249--267.

\bibitem[\protect\astroncite{Benders}{1962}]{bnnobrs1962partitioning}
Benders, J. (1962).
\newblock Partitioning procedures for solving mixed-variables programming problems.
\newblock {\em Numer. Math}, 4(1):238--252.

\bibitem[\protect\astroncite{Benedikt et~al.}{2020}]{benedikt2020power}
Benedikt, O., M{\'o}dos, I., and Hanz{\'a}lek, Z. (2020).
\newblock Power of pre-processing: production scheduling with variable energy pricing and power-saving states.
\newblock {\em Constraints}, 25(3):300--318.

\bibitem[\protect\astroncite{Benedikt et~al.}{2026}]{BENEDIKT2025}
Benedikt, O., Módos, I., Novak, A., and Hanzálek, Z. (2026).
\newblock Green scheduling with time-of-use tariffs and machine states: Optimizing energy cost via branch-and-bound and bin packing strategies.
\newblock {\em European Journal of Operational Research}, 328(1):64--77.

\bibitem[\protect\astroncite{Berthold et~al.}{2010}]{berthold2010constraint}
Berthold, T., Heinz, S., L{\"u}bbecke, M.~E., M{\"o}hring, R.~H., and Schulz, J. (2010).
\newblock A constraint integer programming approach for resource-constrained project scheduling.
\newblock In {\em International Conference on Integration of Artificial Intelligence (AI) and Operations Research (OR) Techniques in Constraint Programming}, pages 313--317. Springer.

\bibitem[\protect\astroncite{Blazewicz et~al.}{1983}]{blazewicz1983scheduling}
Blazewicz, J., Lenstra, J.~K., and Kan, A.~R. (1983).
\newblock Scheduling subject to resource constraints: classification and complexity.
\newblock {\em Discrete applied mathematics}, 5(1):11--24.

\bibitem[\protect\astroncite{Bokde et~al.}{2021}]{bokde2021optimal}
Bokde, N.~D., Pedersen, T.~T., and Andresen, G.~B. (2021).
\newblock Optimal scheduling of flexible power-to-x technologies in the day-ahead electricity market.
\newblock {\em arXiv preprint arXiv:2110.09800}.

\bibitem[\protect\astroncite{Brandimarte}{1993}]{brandimarte1993routing}
Brandimarte, P. (1993).
\newblock Routing and scheduling in a flexible job shop by tabu search.
\newblock {\em Annals of Operations research}, 41(3):157--183.

\bibitem[\protect\astroncite{Chakrabortty et~al.}{2015}]{chakrabortty2015resource}
Chakrabortty, R.~K., Sarker, R., and Essam, D. (2015).
\newblock Resource constrained project scheduling: A branch and cut approach.
\newblock In {\em Proceedings of the 45th international conference on computers and industrial engineering Metz}, volume 132.

\bibitem[\protect\astroncite{Chen and Zhang}{2019}]{chen2019scheduling}
Chen, B. and Zhang, X. (2019).
\newblock Scheduling with time-of-use costs.
\newblock {\em European Journal of Operational Research}, 274(3):900--908.

\bibitem[\protect\astroncite{[dataset] Juvigny}{2025a}]{DVN/SH2DSA_2025}
[dataset] Juvigny, C. (2025a).
\newblock {Dense instances for RCPSP with time-of-use tariffs and machine states, https://doi.org/10.7910/DVN/SH2DSA}.

\bibitem[\protect\astroncite{[dataset] Juvigny}{2025b}]{DVN/AXMPOS_2025}
[dataset] Juvigny, C. (2025b).
\newblock {Instances for FJSP with time-of-use tariffs and machine states, https://doi.org/10.7910/DVN/AXMPOS}.

\bibitem[\protect\astroncite{[dataset] Juvigny}{2025c}]{DVN/XI9JE0_2025}
[dataset] Juvigny, C. (2025c).
\newblock {Instances for RCPSP with blocking times \& machine states \& TOU \& total weighted tardiness criteria, https://doi.org/10.7910/DVN/XI9JE0}.

\bibitem[\protect\astroncite{[dataset] Juvigny}{2025d}]{DVN/EPX98Z_2025}
[dataset] Juvigny, C. (2025d).
\newblock {Instances of standard density for RCPSP with time-of-use tariffs and machine states, https://doi.org/10.7910/DVN/EPX98Z}.

\bibitem[\protect\astroncite{[dataset] Juvigny}{2025e}]{DVN/MQS0NY_2025}
[dataset] Juvigny, C. (2025e).
\newblock {Sparse instances for RCPSP with time-of-use tariffs and machine states, https://doi.org/10.7910/DVN/MQS0NY}.

\bibitem[\protect\astroncite{Dauz{\`e}re-P{\'e}r{\`e}s et~al.}{2024}]{dauzere2024flexible}
Dauz{\`e}re-P{\'e}r{\`e}s, S., Ding, J., Shen, L., and Tamssaouet, K. (2024).
\newblock The flexible job shop scheduling problem: A review.
\newblock {\em European Journal of Operational Research}, 314(2):409--432.

\bibitem[\protect\astroncite{Ding et~al.}{2015}]{ding2015parallel}
Ding, J.-Y., Song, S., Zhang, R., Chiong, R., and Wu, C. (2015).
\newblock Parallel machine scheduling under time-of-use electricity prices: New models and optimization approaches.
\newblock {\em IEEE Transactions on Automation Science and Engineering}, 13(2):1138--1154.

\bibitem[\protect\astroncite{Du et~al.}{2021}]{du2021energy}
Du, B., Tan, T., Guo, J., Li, Y., and Guo, S. (2021).
\newblock Energy-cost-aware resource-constrained project scheduling for complex product system with activity splitting and recombining.
\newblock {\em Expert Systems with Applications}, 173:114754.

\bibitem[\protect\astroncite{Fang et~al.}{2011}]{fang2011new}
Fang, K., Uhan, N., Zhao, F., and Sutherland, J.~W. (2011).
\newblock A new approach to scheduling in manufacturing for power consumption and carbon footprint reduction.
\newblock {\em Journal of Manufacturing Systems}, 30(4):234--240.

\bibitem[\protect\astroncite{Fang et~al.}{2016}]{fang2016scheduling}
Fang, K., Uhan, N.~A., Zhao, F., and Sutherland, J.~W. (2016).
\newblock Scheduling on a single machine under time-of-use electricity tariffs.
\newblock {\em Annals of Operations Research}, 238(1):199--227.

\bibitem[\protect\astroncite{Fattahi and Fallahi}{2010}]{fattahi2010dynamic}
Fattahi, P. and Fallahi, A. (2010).
\newblock Dynamic scheduling in flexible job shop systems by considering simultaneously efficiency and stability.
\newblock {\em CIRP Journal of Manufacturing Science and Technology}, 2(2):114--123.

\bibitem[\protect\astroncite{Fazli~Khalaf and Wang}{2018}]{fazli2018energy}
Fazli~Khalaf, A. and Wang, Y. (2018).
\newblock Energy-cost-aware flow shop scheduling considering intermittent renewables, energy storage, and real-time electricity pricing.
\newblock {\em International Journal of Energy Research}, 42(12):3928--3942.

\bibitem[\protect\astroncite{Gaggero et~al.}{2023}]{gaggero2023exact}
Gaggero, M., Paolucci, M., and Ronco, R. (2023).
\newblock Exact and heuristic solution approaches for energy-efficient identical parallel machine scheduling with time-of-use costs.
\newblock {\em European Journal of Operational Research}, 311(3):845--866.

\bibitem[\protect\astroncite{Gahm et~al.}{2016}]{gahm2016energy}
Gahm, C., Denz, F., Dirr, M., and Tuma, A. (2016).
\newblock Energy-efficient scheduling in manufacturing companies: A review and research framework.
\newblock {\em European Journal of Operational Research}, 248(3):744--757.

\bibitem[\protect\astroncite{Geng et~al.}{2020}]{geng2020bi}
Geng, K., Ye, C., Dai, Z.~H., and Liu, L. (2020).
\newblock Bi-objective re-entrant hybrid flow shop scheduling considering energy consumption cost under time-of-use electricity tariffs.
\newblock {\em Complexity}, 2020(1):8565921.

\bibitem[\protect\astroncite{Gleeson and Ryan}{1990}]{gleeson1990Identifying}
Gleeson, J. and Ryan, J. (1990).
\newblock Identifying minimally infeasible subsystems of inequalities.
\newblock {\em ORSA Journal on Computing}, 2(1):61--63.

\bibitem[\protect\astroncite{Graham et~al.}{1979}]{GRAHAM1979287}
Graham, R., Lawler, E., Lenstra, J., and Kan, A. (1979).
\newblock Optimization and approximation in deterministic sequencing and scheduling: a survey.
\newblock In Hammer, P., Johnson, E., and Korte, B., editors, {\em Discrete Optimization II}, volume~5 of {\em Annals of Discrete Mathematics}, pages 287--326. Elsevier.

\bibitem[\protect\astroncite{Hall and Posner}{2026}]{hall2026optimal}
Hall, N.~G. and Posner, M.~E. (2026).
\newblock Optimal energy scheduling under time-of-use tariffs.
\newblock {\em Journal of Scheduling}, 29(2):157--171.

\bibitem[\protect\astroncite{Heinz et~al.}{2022}]{HEINZ2022108586}
Heinz, V., Novák, A., Vlk, M., and Hanzálek, Z. (2022).
\newblock Constraint programming and constructive heuristics for parallel machine scheduling with sequence-dependent setups and common servers.
\newblock {\em Computers \& Industrial Engineering}, 172:108586.

\bibitem[\protect\astroncite{Heinz et~al.}{2025}]{HEINZ2025}
Heinz, V., Vilím, P., and Hanzálek, Z. (2025).
\newblock Reinforcement learning for search tree size minimization in constraint programming: New results on scheduling benchmarks.
\newblock {\em Computers \& Industrial Engineering}, 209:111413.

\bibitem[\protect\astroncite{Herroelen et~al.}{1998}]{herroelen1998resource}
Herroelen, W., De~Reyck, B., and Demeulemeester, E. (1998).
\newblock Resource-constrained project scheduling: A survey of recent developments.
\newblock {\em Computers \& Operations Research}, 25(4):279--302.

\bibitem[\protect\astroncite{Ho et~al.}{2022}]{ho2022exact}
Ho, M.~H., Hnaien, F., and Dugardin, F. (2022).
\newblock Exact method to optimize the total electricity cost in two-machine permutation flow shop scheduling problem under time-of-use tariff.
\newblock {\em Computers \& Operations Research}, 144:105788.

\bibitem[\protect\astroncite{Holm}{1979}]{holm1979simple}
Holm, S. (1979).
\newblock A simple sequentially rejective multiple test procedure.
\newblock {\em Scandinavian journal of statistics}, pages 65--70.

\bibitem[\protect\astroncite{Hooker}{2007}]{hooker2007planning}
Hooker, J.~N. (2007).
\newblock Planning and scheduling by logic-based benders decomposition.
\newblock {\em Operations research}, 55(3):588--602.

\bibitem[\protect\astroncite{Hooker and Ottosson}{2003}]{hooker2003logic}
Hooker, J.~N. and Ottosson, G. (2003).
\newblock Logic-based benders decomposition.
\newblock {\em Mathematical Programming}, 96(1):33--60.

\bibitem[\protect\astroncite{Hu et~al.}{2024}]{hu2024logic}
Hu, R., Li, Y.-L., Qian, B., Zhang, C.-S., and Liu, J. (2024).
\newblock Logic-based {B}enders decomposition for bi-objective parallel machine selection and job scheduling with release dates and resource consumption.
\newblock {\em Computers \& Operations Research}, 164:106528.

\bibitem[\protect\astroncite{Hung and Michailidis}{2018}]{hung2018modeling}
Hung, Y.-C. and Michailidis, G. (2018).
\newblock Modeling and optimization of time-of-use electricity pricing systems.
\newblock {\em IEEE transactions on smart grid}, 10(4):4116--4127.

\bibitem[\protect\astroncite{Jain and Grossmann}{2001}]{jain2001algorithms}
Jain, V. and Grossmann, I.~E. (2001).
\newblock Algorithms for hybrid milp/cp models for a class of optimization problems.
\newblock {\em INFORMS Journal on computing}, 13(4):258--276.

\bibitem[\protect\astroncite{Johnson}{1977}]{johnson1977efficient}
Johnson, D.~B. (1977).
\newblock Efficient algorithms for shortest paths in sparse networks.
\newblock {\em Journal of the ACM (JACM)}, 24(1):1--13.

\bibitem[\protect\astroncite{Ku and Beck}{2016}]{ku2016mixed}
Ku, W.-Y. and Beck, J.~C. (2016).
\newblock Mixed integer programming models for job shop scheduling: A computational analysis.
\newblock {\em Computers \& Operations Research}, 73:165--173.

\bibitem[\protect\astroncite{Kurniawan et~al.}{2021}]{kurniawan2021distributed}
Kurniawan, B., Song, W., Weng, W., and Fujimura, S. (2021).
\newblock Distributed-elite local search based on a genetic algorithm for bi-objective job-shop scheduling under time-of-use tariffs.
\newblock {\em Evolutionary Intelligence}, 14(4):1581--1595.

\bibitem[\protect\astroncite{Laborie et~al.}{2018}]{laborie2018ibm}
Laborie, P., Rogerie, J., Shaw, P., and Vil{\'\i}m, P. (2018).
\newblock Ibm ilog cp optimizer for scheduling: 20+ years of scheduling with constraints at ibm/ilog.
\newblock {\em Constraints}, 23(2):210--250.

\bibitem[\protect\astroncite{Lei et~al.}{2022}]{lei2021decomposition}
Lei, D., Yuan, Y., Cai, J., and Bai, D. (2022).
\newblock Decomposition approaches for parallel machine scheduling of step-deteriorating jobs to minimize total tardiness and energy consumption.
\newblock {\em Complex \& Intelligent Systems}, 8:1407--1422.

\bibitem[\protect\astroncite{López-Ibáñez et~al.}{2016}]{irace}
López-Ibáñez, M., Dubois-Lacoste, J., {Pérez Cáceres}, L., Birattari, M., and Stützle, T. (2016).
\newblock The irace package: Iterated racing for automatic algorithm configuration.
\newblock {\em Operations Research Perspectives}, 3:43--58.

\bibitem[\protect\astroncite{Maghsoudlou et~al.}{2021}]{maghsoudlou2021framework}
Maghsoudlou, H., Afshar-Nadjafi, B., and Niaki, S. T.~A. (2021).
\newblock A framework for preemptive multi-skilled project scheduling problem with time-of-use energy tariffs.
\newblock {\em Energy Systems}, 12(2):431--458.

\bibitem[\protect\astroncite{M{\"o}hring et~al.}{2003}]{mohring2003solving}
M{\"o}hring, R.~H., Schulz, A.~S., Stork, F., and Uetz, M. (2003).
\newblock Solving project scheduling problems by minimum cut computations.
\newblock {\em Management science}, 49(3):330--350.

\bibitem[\protect\astroncite{Munlin}{2018}]{munlin2018solving}
Munlin, M. (2018).
\newblock Solving resource-constrained project scheduling problem using metaheuristic algorithm.
\newblock In {\em 2018 5th International Conference on Electrical and Electronic Engineering (ICEEE)}, pages 344--349. IEEE.

\bibitem[\protect\astroncite{Módos et~al.}{2016}]{modos2016robust}
Módos, I., Šůcha, P., and Hanzálek, Z. (2016).
\newblock Robust scheduling for manufacturing with energy consumption limits.
\newblock In {\em 2016 IEEE 21st international conference on emerging technologies and factory automation (ETFA)}, pages 1--8. IEEE.

\bibitem[\protect\astroncite{Naderi and Roshanaei}{2022}]{naderi2022critical}
Naderi, B. and Roshanaei, V. (2022).
\newblock Critical-path-search logic-based benders decomposition approaches for flexible job shop scheduling.
\newblock {\em INFORMS Journal on Optimization}, 4(1):1--28.

\bibitem[\protect\astroncite{Nedbálek and Novák}{2025}]{nedbalek2025bottleneck}
Nedbálek, L. and Novák, A. (2025).
\newblock Bottleneck identification in resource-constrained project scheduling via constraint relaxation.
\newblock In {\em Proceedings of the 14th International Conference on Operations Research and Enterprise Systems - ICORES}, pages 340--347. INSTICC, SciTePress.

\bibitem[\protect\astroncite{Park and Ham}{2022}]{park2022energy}
Park, M.-J. and Ham, A. (2022).
\newblock Energy-aware flexible job shop scheduling under time-of-use pricing.
\newblock {\em International Journal of Production Economics}, 248:108507.

\bibitem[\protect\astroncite{Pellerin et~al.}{2020}]{pellerin2020survey}
Pellerin, R., Perrier, N., and Berthaut, F. (2020).
\newblock A survey of hybrid metaheuristics for the resource-constrained project scheduling problem.
\newblock {\em European journal of operational research}, 280(2):395--416.

\bibitem[\protect\astroncite{Pouramin et~al.}{2024}]{pouramin2024multi}
Pouramin, M., Mirzazadeh, A., Davari-Ardakani, H., Mosadegh, H., and Alajegerdi, E. (2024).
\newblock Multi-mode resource-constrained project scheduling problem along with material ordering under time-of-use electricity tariffs and carbon taxes.
\newblock {\em Annals of Operations Research}, pages 1--45.

\bibitem[\protect\astroncite{Rocholl et~al.}{2020}]{rocholl2020bi}
Rocholl, J., M{\"o}nch, L., and Fowler, J. (2020).
\newblock Bi-criteria parallel batch machine scheduling to minimize total weighted tardiness and electricity cost.
\newblock {\em Journal of Business Economics}, 90(9):1345--1381.

\bibitem[\protect\astroncite{Schulz et~al.}{2020}]{schulz2020multi}
Schulz, S., Buscher, U., and Shen, L. (2020).
\newblock Multi-objective hybrid flow shop scheduling with variable discrete production speed levels and time-of-use energy prices.
\newblock {\em Journal of Business Economics}, 90(9):1315--1343.

\bibitem[\protect\astroncite{Seman et~al.}{2024}]{seman2024benders}
Seman, L.~O., Rigo, C.~A., Camponogara, E., and Bezerra, E.~A. (2024).
\newblock {B}enders decomposition for the energy aware task scheduling of constellations of nanosatellites.
\newblock {\em Computers \& Operations Research}, 165:106618.

\bibitem[\protect\astroncite{Shrouf et~al.}{2014}]{shrouf2014optimizing}
Shrouf, F., Ordieres-Mer{\'e}, J., Garc{\'\i}a-S{\'a}nchez, A., and Ortega-Mier, M. (2014).
\newblock Optimizing the production scheduling of a single machine to minimize total energy consumption costs.
\newblock {\em Journal of Cleaner Production}, 67:197--207.

\bibitem[\protect\astroncite{Siek et~al.}{2001}]{siek2001boost}
Siek, J.~G., Lee, L.-Q., and Lumsdaine, A. (2001).
\newblock {\em The Boost Graph Library: User Guide and Reference Manual}.
\newblock Pearson Education.

\bibitem[\protect\astroncite{Sprecher and Kolisch}{1996}]{sprecher1996psplib}
Sprecher, A. and Kolisch, R. (1996).
\newblock Psplib-a project scheduling problem library.
\newblock {\em European Journal of Operational Research}, 96(1):205--216.

\bibitem[\protect\astroncite{Wiest}{1967}]{wiest1967heuristic}
Wiest, J.~D. (1967).
\newblock A heuristic model for scheduling large projects with limited resources.
\newblock {\em Management Science}, 13(6):B--359.

\bibitem[\protect\astroncite{Wilcoxon}{1945}]{wilcoxon1945individual}
Wilcoxon, F. (1945).
\newblock Individual comparisons by ranking methods.
\newblock {\em Biometrics bulletin}, 1(6):80--83.

\bibitem[\protect\astroncite{Xiechen et~al.}{2026}]{XIECHEN2026107567}
Xiechen, Z., Eric, A., Feng, C., and Damien, R. (2026).
\newblock A stochastic bi-objective unrelated parallel machine scheduling under time-of-use tariffs.
\newblock {\em Computers \& Operations Research}, page 107567.

\bibitem[\protect\astroncite{Yang et~al.}{2024}]{yang2024logic}
Yang, S., Wang, J.-Q., and Xu, Z. (2024).
\newblock Logic-based {B}enders decomposition for order acceptance and scheduling on heterogeneous factories with carbon caps.
\newblock {\em Computers \& Operations Research}, 168:106706.

\bibitem[\protect\astroncite{Zhao et~al.}{2025}]{ZHAO2025111754}
Zhao, S., Zhou, H., Chu, F., Wang, D., and Gao, K. (2025).
\newblock A twin-reinforced evolutionary algorithm for flexible job shop scheduling problem under time-of-use tariffs.
\newblock {\em Computers \& Industrial Engineering}, page 111754.

\bibitem[\protect\astroncite{Zheng and Wang}{2015}]{zheng2015reduction}
Zheng, H.-y. and Wang, L. (2015).
\newblock Reduction of carbon emissions and project makespan by a pareto-based estimation of distribution algorithm.
\newblock {\em International Journal of Production Economics}, 164:421--432.

\bibitem[\protect\astroncite{Zuccato et~al.}{2025}]{zuccato2025energy}
Zuccato, F., Rodler, P., Friedrich, G., Schekotihin, K., and Comploi-Taupe, R. (2025).
\newblock Energy-aware double-flexible job shop scheduling with machine modes and setup times: A real-world industrial case study using constraint programming.
\newblock {\em ECAI Workshop on AI-based Planning for Complex Real-World Applications}.

\end{thebibliography}

\appendix


\section{Acronyms table}
\label{section:acronyms}

\begin{table}[h!]
    \centering
    \begin{small}
    \begin{tabular}{cc}
    \toprule
        RCPSP & Resource-Constrained Project Scheduling Problem \\
        (M)ILP & (Mixed) Integer Linear Programming \\
        CP & Constraint Programming \\
        LBBD & Logic-Based Benders Decomposition \\
        TOU & Time-of-use tariffs \\
        TEC & Total Energy Cost \\
        LB/UB & Lower bounds / Upper bounds \\
    \bottomrule
    \end{tabular}
    \end{small}
    \caption{Acronyms used throughout the paper.}
    \label{tab:summary_acronyms}
\end{table}

\clearpage
\newpage
\section{SPACES graph}
\label{section:SPACES}

The SPACES graph, depicted in Figure~\ref{fig:example-states-graph}, is a time–state graph whose nodes are (state, interval) pairs of the energy-intensive machine over its state set $\Sigma$, and whose arcs weight each admissible state change by its transition time $T$ and transition energy $P$ at the prevailing time-of-use tariffs. The minimum-cost machine-state trajectory between any two proc intervals — and between the initial/final off state and any proc state — is obtained as a shortest path in this graph. SPACES therefore decouples the scheduling decisions from the optimization of the state trajectory: once the task start times are fixed, the optimal states filling the surrounding gaps follow directly from precomputed shortest paths, which sharply reduces the model size.

\begin{figure}[h!]
    \centering
    \input{fig/clipped_spaces}
    \caption{SPACES graph for the transition graph in Figure~2 and energy cost profile from Figure~4.}
    \label{fig:example-states-graph}
\end{figure}

\clearpage
\newpage
\section{Detailed characteristics of test instance sets}
\label{section:detailed_characteristics}

The following tables detail the characteristics of the instance sets employed in Section 6 to assess the different methods. Table~\ref{tab:summary_stats} presents the characteristics of the standard instances set by set, while Table~\ref{tab:summary_stats_dense_sparse} give summary statistics of dense and sparse instance sets.
Figure~\ref{fig:precedence_example} draws a typical precedence structure obtained by merging two RCPSP instances from PSPLIB\footnote{Available at: \href{https://www.om-db.wi.tum.de/psplib/}{https://www.om-db.wi.tum.de/psplib/}} together.

\begin{table}[ht]
\centering
\begin{small}
\begin{tabular}{|l||r|rrrr|r|rrrr|}
\toprule
 & \multicolumn{1}{c|}{$|\mathcal{T}|$} & \multicolumn{4}{c|}{$|\mathcal{J}|$} & $|\mathcal{J}|/|\mathcal{T}|$ & \multicolumn{4}{c|}{Horizon $h$} \\
Set & & mean & min & max & std & mean & mean & min & max & std \\
\midrule
1 & 32 & 6.55 & 3 & 10 & 1.67 & 20.47 & 836.30 & 486 & 1117 & 150.19 \\
2 & 64 & 11.60 & 8 & 15 & 1.64 & 18.12 & 963.95 & 740 & 1288 & 153.68 \\
3 & 96 & 17.70 & 12 & 22 & 2.58 & 18.44 & 1297.70 & 928 & 1543 & 166.88 \\
4 & 128 & 23.05 & 17 & 31 & 3.05 & 18.01 & 1672.35 & 1272 & 2055 & 225.03 \\
5 & 160 & 29.70 & 23 & 35 & 3.66 & 18.56 & 1953.05 & 1601 & 2291 & 187.93 \\
6 & 192 & 35.10 & 26 & 53 & 5.42 & 18.28 & 2306.45 & 1624 & 3228 & 397.50 \\
7 & 224 & 40.65 & 35 & 47 & 3.54 & 18.15 & 2723.45 & 1999 & 3539 & 349.78 \\
8 & 256 & 44.80 & 38 & 50 & 3.24 & 17.50 & 3039.45 & 2110 & 3757 & 411.91 \\
9 & 288 & 51.40 & 42 & 60 & 5.48 & 17.85 & 3423.10 & 2670 & 3997 & 401.12 \\
10 & 320 & 56.35 & 46 & 68 & 6.43 & 17.61 & 3816.95 & 2924 & 4517 & 509.27 \\
11 & 352 & 59.25 & 52 & 69 & 3.70 & 16.83 & 4056.00 & 3445 & 4607 & 329.44 \\
12 & 384 & 66.60 & 58 & 79 & 6.17 & 17.34 & 4406.70 & 3469 & 5710 & 531.31 \\
13 & 416 & 71.95 & 61 & 85 & 5.95 & 17.30 & 4763.65 & 4180 & 6307 & 493.58 \\
14 & 448 & 79.25 & 73 & 87 & 4.56 & 17.69 & 5202.85 & 4488 & 6066 & 389.12 \\
15 & 480 & 85.75 & 74 & 97 & 7.37 & 17.86 & 5527.15 & 4513 & 6207 & 519.27 \\
\bottomrule
\end{tabular}
\end{small}
\caption{Characteristics of standard instance sets. For each set, we present the number of tasks ($|\mathcal{T}|$), the number of energy-intensive tasks ($|\mathcal{J}|$), the ratio of energy-intensive tasks over the overall number of tasks (in \%), and the time horizon. All instances of a set share the same number of tasks (std = standard deviation).}
\label{tab:summary_stats}
\end{table}

\begin{table}[t]
\centering
\label{tab:large-instances}
\begin{subtable}[t]{0.48\linewidth}
\centering
\begin{small}
\begin{tabular}{|l||rrrr|}
\toprule
 & $|\mathcal{T}|$ & $|\mathcal{J}|$ & $|\mathcal{J}|/|\mathcal{T}|$ & Horizon $h$ \\
\midrule
mean & 128.00 & 65.00 & 51.16 & 4315.06 \\
std  &  69.12 & 34.56 &  0.95 & 2313.90 \\
min  &  32.00 & 17.00 & 50.45 & 1098.25 \\
25\% &  80.00 & 41.00 & 50.57 & 2705.88 \\
50\% & 128.00 & 65.00 & 50.78 & 4277.05 \\
75\% & 176.00 & 89.00 & 51.30 & 5978.52 \\
max  & 224.00 &113.00 & 53.12 & 7461.30 \\
\bottomrule
\end{tabular}
\end{small}
\caption{Summary statistics of dense instances.}
\label{tab:densely-large-instances}
\end{subtable}
\hfill
\begin{subtable}[t]{0.48\linewidth}
\centering
\begin{small}
\begin{tabular}{|l||rrrr|}
\toprule
 & $|\mathcal{T}|$ & $|\mathcal{J}|$ & $|\mathcal{J}|/|\mathcal{T}|$ & Horizon $h$ \\
\midrule
mean & 336.00 & 11.50 & 3.69 & 2407.87 \\
std  & 189.31 &  5.92 & 0.70 & 1114.68 \\
min  &  32.00 &  2.00 & 3.28 &  764.20 \\
25\% & 184.00 &  6.75 & 3.33 & 1549.23 \\
50\% & 336.00 & 11.50 & 3.42 & 2349.10 \\
75\% & 488.00 & 16.25 & 3.67 & 3360.75 \\
max  & 640.00 & 21.00 & 6.25 & 4065.30 \\
\bottomrule
\end{tabular}
\end{small}
\caption{Summary statistics of sparse instances.}
\label{tab:sparsely-large-instances}
\end{subtable}
\caption{Summary statistics of sparse and dense instances. $|\mathcal{J}|$ and $|\mathcal{T}|$ denote the number of energy-intensive tasks and overall tasks, respectively. The ratio $|\mathcal{J}|/|\mathcal{T}|$ is reported as a percentage. Horizon length and $|\mathcal{J}|$ are averaged across instances within each set.}
\label{tab:summary_stats_dense_sparse}
\end{table}

\begin{figure}[ht]
    \centering
\begin{tikzpicture}[
    scale=1.0,
    every node/.style={inner sep=0pt, outer sep=0pt},
    job/.style={circle, draw=black!55, line width=0.2pt, minimum size=2mm},
    normal/.style={job, fill=blue!28},
    ee/.style={job, fill=orange!70},
    dummy/.style={job, fill=black!18},
    boundary/.style={circle, draw=black!70, line width=0.4pt,
                     minimum size=2.6mm, fill=black!15},
    arc/.style={-{Latex[length=1.1mm,width=0.8mm]},
                draw=black!50, line width=0.2pt, opacity=0.65},
  ]
  \node[dummy] (n0) at (0.700,-0.525) {};
  \node[normal] (n1) at (1.400,-0.875) {};
  \node[normal] (n2) at (1.400,-1.225) {};
  \node[normal] (n3) at (1.400,-0.525) {};
  \node[normal] (n4) at (2.100,-2.275) {};
  \node[normal] (n5) at (2.100,-1.225) {};
  \node[normal] (n6) at (2.100,-0.875) {};
  \node[normal] (n7) at (2.100,-1.925) {};
  \node[normal] (n8) at (2.100,-0.525) {};
  \node[normal] (n9) at (2.800,-2.100) {};
  \node[normal] (n10) at (2.800,-1.050) {};
  \node[ee] (n11) at (3.500,-1.750) {};
  \node[ee] (n12) at (3.500,-1.400) {};
  \node[normal] (n13) at (2.100,-1.575) {};
  \node[ee] (n14) at (4.200,-1.925) {};
  \node[ee] (n15) at (2.800,-1.750) {};
  \node[ee] (n16) at (4.200,-1.575) {};
  \node[normal] (n17) at (4.200,-1.225) {};
  \node[normal] (n18) at (4.900,-1.225) {};
  \node[normal] (n19) at (2.100,-0.175) {};
  \node[normal] (n20) at (4.900,-0.875) {};
  \node[normal] (n21) at (2.800,-1.400) {};
  \node[normal] (n22) at (4.200,-0.875) {};
  \node[normal] (n23) at (5.600,-0.700) {};
  \node[normal] (n24) at (6.300,-0.525) {};
  \node[normal] (n25) at (3.500,-1.050) {};
  \node[ee] (n26) at (3.500,-0.700) {};
  \node[normal] (n27) at (4.200,-0.525) {};
  \node[normal] (n28) at (4.200,-0.175) {};
  \node[normal] (n29) at (4.900,-0.525) {};
  \node[normal] (n30) at (7.000,-0.525) {};
  \node[dummy] (n31) at (7.700,0.000) {};
  \node[dummy] (n32) at (0.700,0.525) {};
  \node[normal] (n33) at (1.400,1.225) {};
  \node[normal] (n34) at (1.400,0.875) {};
  \node[normal] (n35) at (1.400,0.525) {};
  \node[normal] (n36) at (2.100,2.275) {};
  \node[normal] (n37) at (2.800,0.350) {};
  \node[normal] (n38) at (2.800,2.100) {};
  \node[normal] (n39) at (2.100,1.225) {};
  \node[normal] (n40) at (2.100,0.875) {};
  \node[normal] (n41) at (2.100,1.575) {};
  \node[normal] (n42) at (2.100,1.925) {};
  \node[normal] (n43) at (2.800,1.400) {};
  \node[normal] (n44) at (3.500,1.400) {};
  \node[normal] (n45) at (3.500,1.750) {};
  \node[normal] (n46) at (2.800,0.000) {};
  \node[normal] (n47) at (2.800,1.750) {};
  \node[normal] (n48) at (4.200,1.925) {};
  \node[ee] (n49) at (4.200,0.875) {};
  \node[normal] (n50) at (2.800,0.700) {};
  \node[ee] (n51) at (3.500,1.050) {};
  \node[ee] (n52) at (4.200,1.225) {};
  \node[ee] (n53) at (3.500,0.700) {};
  \node[normal] (n54) at (4.900,1.225) {};
  \node[normal] (n55) at (2.800,1.050) {};
  \node[normal] (n56) at (3.500,0.350) {};
  \node[normal] (n57) at (4.900,0.525) {};
  \node[normal] (n58) at (5.600,0.350) {};
  \node[normal] (n59) at (4.200,1.575) {};
  \node[normal] (n60) at (5.600,0.700) {};
  \node[normal] (n61) at (6.300,0.525) {};
  \node[normal] (n62) at (4.900,0.875) {};
  \node[dummy] (n63) at (7.000,0.525) {};
  \node[boundary, label={[font=\scriptsize, label distance=1.5mm]below:$0$}] (S) at (0.000,0.000) {};
  \node[boundary, label={[font=\scriptsize, label distance=1.5mm]below:$n+1$}] (T) at (8.400,0.000) {};
  \begin{scope}[on background layer]
    \draw[arc] (n0) -- (n1);
    \draw[arc] (n0) -- (n2);
    \draw[arc] (n0) -- (n3);
    \draw[arc] (n1) -- (n5);
    \draw[arc] (n1) -- (n8);
    \draw[arc] (n1) -- (n13);
    \draw[arc] (n2) -- (n6);
    \draw[arc] (n2) -- (n7);
    \draw[arc] (n2) -- (n9);
    \draw[arc] (n3) -- (n4);
    \draw[arc] (n3) -- (n19);
    \draw[arc] (n3) -- (n22);
    \draw[arc] (n4) -- (n9);
    \draw[arc] (n4) -- (n17);
    \draw[arc] (n5) -- (n15);
    \draw[arc] (n5) -- (n16);
    \draw[arc] (n5) -- (n23);
    \draw[arc] (n6) -- (n10);
    \draw[arc] (n7) -- (n12);
    \draw[arc] (n7) -- (n14);
    \draw[arc] (n8) -- (n25);
    \draw[arc] (n8) -- (n29);
    \draw[arc] (n9) -- (n11);
    \draw[arc] (n9) -- (n26);
    \draw[arc] (n10) -- (n12);
    \draw[arc] (n10) -- (n27);
    \draw[arc] (n10) -- (n28);
    \draw[arc] (n11) -- (n14);
    \draw[arc] (n11) -- (n16);
    \draw[arc] (n11) -- (n27);
    \draw[arc] (n12) -- (n17);
    \draw[arc] (n12) -- (n22);
    \draw[arc] (n12) -- (n24);
    \draw[arc] (n13) -- (n14);
    \draw[arc] (n13) -- (n21);
    \draw[arc] (n14) -- (n18);
    \draw[arc] (n14) -- (n20);
    \draw[arc] (n15) -- (n17);
    \draw[arc] (n15) -- (n20);
    \draw[arc] (n16) -- (n20);
    \draw[arc] (n17) -- (n29);
    \draw[arc] (n18) -- (n23);
    \draw[arc] (n19) -- (n23);
    \draw[arc] (n19) -- (n25);
    \draw[arc] (n19) -- (n27);
    \draw[arc] (n20) -- (n30);
    \draw[arc] (n21) -- (n24);
    \draw[arc] (n21) -- (n25);
    \draw[arc] (n21) -- (n26);
    \draw[arc] (n22) -- (n29);
    \draw[arc] (n23) -- (n24);
    \draw[arc] (n24) -- (n30);
    \draw[arc] (n25) -- (n28);
    \draw[arc] (n26) -- (n28);
    \draw[arc] (n27) -- (n30);
    \draw[arc] (n28) -- (n31);
    \draw[arc] (n29) -- (n31);
    \draw[arc] (n30) -- (n31);
    \draw[arc] (n31) -- (T);
    \draw[arc] (n32) -- (n33);
    \draw[arc] (n32) -- (n34);
    \draw[arc] (n32) -- (n35);
    \draw[arc] (n33) -- (n36);
    \draw[arc] (n33) -- (n41);
    \draw[arc] (n33) -- (n42);
    \draw[arc] (n34) -- (n38);
    \draw[arc] (n34) -- (n40);
    \draw[arc] (n34) -- (n42);
    \draw[arc] (n35) -- (n37);
    \draw[arc] (n35) -- (n39);
    \draw[arc] (n35) -- (n41);
    \draw[arc] (n36) -- (n37);
    \draw[arc] (n36) -- (n38);
    \draw[arc] (n36) -- (n47);
    \draw[arc] (n37) -- (n60);
    \draw[arc] (n38) -- (n44);
    \draw[arc] (n38) -- (n45);
    \draw[arc] (n39) -- (n43);
    \draw[arc] (n39) -- (n46);
    \draw[arc] (n39) -- (n50);
    \draw[arc] (n40) -- (n46);
    \draw[arc] (n40) -- (n51);
    \draw[arc] (n40) -- (n56);
    \draw[arc] (n41) -- (n44);
    \draw[arc] (n41) -- (n55);
    \draw[arc] (n41) -- (n61);
    \draw[arc] (n42) -- (n51);
    \draw[arc] (n43) -- (n44);
    \draw[arc] (n43) -- (n53);
    \draw[arc] (n43) -- (n56);
    \draw[arc] (n44) -- (n48);
    \draw[arc] (n44) -- (n62);
    \draw[arc] (n45) -- (n48);
    \draw[arc] (n45) -- (n49);
    \draw[arc] (n46) -- (n61);
    \draw[arc] (n47) -- (n51);
    \draw[arc] (n47) -- (n53);
    \draw[arc] (n48) -- (n54);
    \draw[arc] (n49) -- (n58);
    \draw[arc] (n50) -- (n53);
    \draw[arc] (n51) -- (n52);
    \draw[arc] (n51) -- (n54);
    \draw[arc] (n52) -- (n57);
    \draw[arc] (n53) -- (n54);
    \draw[arc] (n53) -- (n57);
    \draw[arc] (n53) -- (n59);
    \draw[arc] (n54) -- (n60);
    \draw[arc] (n55) -- (n60);
    \draw[arc] (n55) -- (n62);
    \draw[arc] (n56) -- (n58);
    \draw[arc] (n56) -- (n59);
    \draw[arc] (n57) -- (n58);
    \draw[arc] (n58) -- (n61);
    \draw[arc] (n59) -- (n62);
    \draw[arc] (n60) -- (n63);
    \draw[arc] (n61) -- (n63);
    \draw[arc] (n62) -- (n63);
    \draw[arc] (n63) -- (T);
    \draw[arc] (S) -- (n0);
    \draw[arc] (S) -- (n32);
  \end{scope}
\end{tikzpicture}
    \caption{Precedence structure in the example instance created from two RCPSP instances. In grey: original (from the RCPSP instances) and new global dummy vertices; in orange: energy-intensive tasks.}
    \label{fig:precedence_example}
\end{figure}

\clearpage
\newpage
\section{Evolution of Upper \& Lower bounds between approaches}
\label{section:ub-lb-evolution}

In Figure~\ref{fig:evolution_of_UB_gap_5_5_1.0}, the evolution of the value of the upper bound (UB) and of the gap to the best proven lower bound (LB) of methods \LBBDG (without \Freecp warmstart), \ILPF, and \Freecp, for instance standard\_5\_5, is presented. This particular instance was selected due to its representativeness of the algorithms' behavior. We can see that the \Freecp is the method having the steepest UB curve (see Figure~\ref{fig:evolution_of_UB_5_5_1.0}). The \LBBDG method follows closely behind, exhibiting a similarly sharp incline. This means that these approaches quickly converge to a good solution. In contrast, \ILPF takes more time to improve its UB. However, it ends up by finding and proving the optimal solution, as the \LBBDG does, whereas the \Freecp gets stuck with a good but non-optimal solution, which seems unable to improve this solution. Indeed, its LB is very weak compared to the \texttt{ILP} approaches (Figure~\ref{fig:evolution_of_gap_5_5_1.0}). For information, in this instance, the \LBBDG generates 18 feasibility cuts and 15 optimality cuts, with an average size of $\mathcal{I}nf$ of 1.333 variables. Moreover, for an overall computation time of 35 seconds, the solver passes around 0.6 seconds calculating the subproblems (considering that 33 subproblems were computed in total, this represents approximately 18 milliseconds per subproblem).

\begin{figure}[h!]
    \centering
    \begin{subfigure}[t]{0.48\linewidth}
        \centering
        \begin{tikzpicture}
\begin{axis}[
    width=0.75\linewidth,
    height=0.4\linewidth,
    scale only axis,
    grid=both,
    xlabel={time [s]},
    ylabel={objective value [-]},
    xmin=0, xmax=180, ymax=63000,
     scaled y ticks=false,
    y tick label style={
        /pgf/number format/fixed,
        /pgf/number format/precision=0 
    },
    legend style={
        legend cell align=left,   
    },
]

\pgfplotsset{
  mybox A/.style={solid,  draw=blue!60,  fill opacity=0.45},
  mybox B/.style={solid,   draw=red!60,   fill opacity=0.45},
  mybox C/.style={solid,  draw=green!60, fill opacity=0.45},
  mybox D/.style={solid,draw=orange!60,fill opacity=0.45},
  mybox E/.style={solid,draw=gray!60,fill opacity=0.45},
}

    \addplot[mybox A, very thick
    ] table[
        x=Time,
        y=ObjLBBD,
        col sep=comma
    ] {fig/tikz_data/RCPSP_standar_instances_5_5_tikz.csv};
    \addlegendentry{LBBD-opt}


    \addplot[mybox D, very thick
    ] table[
        x=Time,
        y=ObjCP,
        col sep=comma
    ] {fig/tikz_data/RCPSP_standar_instances_5_5_tikz.csv};
    \addlegendentry{freecp}

    \addplot[mybox E, very thick
    ] table[
        x=Time,
        y=ObjILP,
        col sep=comma
    ] {fig/tikz_data/RCPSP_standar_instances_5_5_tikz.csv};
    \addlegendentry{ILP-fsws}



    \addplot[
        black,
        very thick,
        dotted
    ] table[
        x=Time,
        y=ObjOptimal,
        col sep=comma
    ] {fig/tikz_data/RCPSP_standar_instances_5_5_tikz.csv};
    \addlegendentry{Optimum}
\end{axis}
\end{tikzpicture}
        \caption{Evolution of the upper bound (UB).}
        \label{fig:evolution_of_UB_5_5_1.0}
    \end{subfigure}~~~
    \begin{subfigure}[t]{0.48\linewidth}
        \centering
        \begin{tikzpicture}
\begin{axis}[
    width=0.75\linewidth,
    height=0.4\linewidth,
    scale only axis,
    grid=both,
    xlabel={time [s]},
    ylabel={optimality gap [\%]},
    xmin=0, xmax=180,
     scaled y ticks=false,
    y tick label style={
        /pgf/number format/fixed,
        /pgf/number format/precision=0 
    },
    legend pos=north east,        
    legend style={
        legend cell align=left,   
        yshift=-15pt             
    },
]

\pgfplotsset{
  mybox A/.style={solid,  draw=blue!60,  fill opacity=0.45},
  mybox B/.style={solid,   draw=red!60,   fill opacity=0.45},
  mybox C/.style={solid,  draw=green!60, fill opacity=0.45},
  mybox D/.style={solid,draw=orange!60,fill opacity=0.45},
  mybox E/.style={solid,draw=gray!60,fill opacity=0.45},
}


    \addplot[mybox A, very thick,
        dashed
    ] table[
        x=Time,
        y=GapLBBD,
        col sep=comma
    ] {fig/tikz_data/RCPSP_standar_instances_5_5_tikz.csv};
    \addlegendentry{LBBD-opt}


    \addplot[mybox D, very thick,
        dashed
    ] table[
        x=Time,
        y=GapCP,
        col sep=comma
    ] {fig/tikz_data/RCPSP_standar_instances_5_5_tikz.csv};
    \addlegendentry{freecp}

    \addplot[mybox E, very thick,
        dashed
    ] table[
        x=Time,
        y=GapILP,
        col sep=comma
    ] {fig/tikz_data/RCPSP_standar_instances_5_5_tikz.csv};
    \addlegendentry{ILP-fsws}



linewidth\end{axis}
\end{tikzpicture}
        \caption{Evolution of the gap of the best proven lower bound (LB).}
        \label{fig:evolution_of_gap_5_5_1.0}
    \end{subfigure}
    \caption{Evolution of the UB and the optimality gap during the solution in standard instance 5\_5 and $\alpha = 1$. }
    \label{fig:evolution_of_UB_gap_5_5_1.0}
\end{figure}

\clearpage
\newpage
\section{Boxplots of real improvements over \ILPF}
\label{section:boxplots}

This section displays the real improvements over \ILPF in the form of boxplots per set and per $\alpha$ values. Three methods are considered, \LBBDG (\LBBD in sparse instances), \ILPC and \Freecp. The real improvements are computed for any method $\mu$ as:
\begin{small}
\begin{align}
\label{eq:delta}
    \delta_{\mu} = \frac{obj_{\ILPF}-obj_\mu}{|obj_{\ILPF}|}\cdot 100
\end{align}
\end{small}

\begin{figure}[h!]
    \centering
    \resizebox{\linewidth}{!}{\begin{tikzpicture}

\pgfplotsset{
  mybox A/.style={solid, fill=blue!35,   draw=blue!60,   fill opacity=0.45}, 
  mybox B/.style={solid, fill=green!35,  draw=green!60,  fill opacity=0.45}, 
  mybox C/.style={solid, fill=orange!50, draw=orange!60, fill opacity=0.45}, 
}

\def\offA{-0.25}
\def\offB{ 0.00}
\def\offC{ 0.25}

\pgfplotstableread[col sep=comma]{fig/tikz_data/gap_standard_alpha25.csv}\datAlphaA
\pgfplotstableread[col sep=comma]{fig/tikz_data/gap_standard_alpha50.csv}\datAlphaB
\pgfplotstableread[col sep=comma]{fig/tikz_data/gap_standard_alpha75.csv}\datAlphaC
\pgfplotstableread[col sep=comma]{fig/tikz_data/gap_standard_alpha100.csv}\datAlphaD

\newcommand{\drawbox}[5]{%
  \pgfplotstablegetelem{#2}{median}\of#1\edef\bxmed{\pgfplotsretval}%
  \pgfplotstablegetelem{#2}{q1}\of#1\edef\bxqone{\pgfplotsretval}%
  \pgfplotstablegetelem{#2}{q3}\of#1\edef\bxqthree{\pgfplotsretval}%
  \pgfplotstablegetelem{#2}{lower}\of#1\edef\bxlo{\pgfplotsretval}%
  \pgfplotstablegetelem{#2}{upper}\of#1\edef\bxhi{\pgfplotsretval}%
  \edef\tmpaddplot{%
    \noexpand\addplot+[#3,%
      boxplot prepared={%
        median=\bxmed,%
        lower quartile=\bxqone,%
        upper quartile=\bxqthree,%
        lower whisker=\bxlo,%
        upper whisker=\bxhi%
      },%
      boxplot/draw position={#5+#4}%
    ] coordinates {}%
  }%
  \tmpaddplot;%
}

\newcommand{\drawset}[2]{%
  \pgfmathtruncatemacro{\rowF}{(#2-1)*3+0}%
  \pgfmathtruncatemacro{\rowI}{(#2-1)*3+1}%
  \pgfmathtruncatemacro{\rowL}{(#2-1)*3+2}%
  \drawbox{#1}{\rowF}{mybox B}{\offB}{#2}
  \drawbox{#1}{\rowI}{mybox C}{\offC}{#2}
  \drawbox{#1}{\rowL}{mybox A}{\offA}{#2}
}

\begin{groupplot}[
  group style={
    group size=2 by 2,
    horizontal sep=1.2cm,
    vertical sep=1.6cm,
  },
  width=0.5\textwidth, height=0.30\textwidth,
  ymajorgrids,
  enlarge x limits=0.03,
  boxplot/box extend=0.18,
  boxplot/draw direction=y,
  xtick={1,...,15},
  xticklabels={1,2,3,4,5,6,7,8,9,10,11,12,13,14,15},
  xlabel={Instance set [-]},
  ylabel={rel. impr. over ILP-fsws [\%]},
  every axis plot/.append style={solid},
  every boxplot/.append style={forget plot},
  legend image post style={fill opacity=1, draw opacity=1},
  legend style={
    at={(0.02,0.98)}, anchor=north west,
    font=\scriptsize, fill=white, fill opacity=0.9, text opacity=1,
    draw=black!30,
  },
  title style={font=\small},
]

\nextgroupplot[title={$\alpha = 0.25$}, ymin=-5, ymax=35, xlabel=\empty]
\foreach \g in {1,...,15}{\drawset{\datAlphaA}{\g}}
\addlegendimage{area legend, mybox A}\addlegendentry{\LBBDG}
\addlegendimage{area legend, mybox B}\addlegendentry{\Freecp}
\addlegendimage{area legend, mybox C}\addlegendentry{\ILPC}

\nextgroupplot[
  title={$\alpha = 0.5$},
  ymin=-5,
  ymax=25,
  ylabel=\empty,
  xlabel=\empty
]
\foreach \g in {1,...,15}{\drawset{\datAlphaB}{\g}}

\nextgroupplot[title={$\alpha = 0.75$}, ymin=-6, ymax=30]
\foreach \g in {1,...,15}{\drawset{\datAlphaC}{\g}}

\nextgroupplot[
  title={$\alpha = 1.0$},
  ymin=-20,
  ymax=40,
  ylabel=\empty
]
\foreach \g in {1,...,15}{\drawset{\datAlphaD}{\g}}

\end{groupplot}
\end{tikzpicture}}
    \caption{Real improvements of different methods relative to \ILPF in standard sets.}
    \label{fig:standard_gap_to_ilp}
\end{figure}

\begin{figure}[h!]
    \centering
    \resizebox{\linewidth}{!}{\begin{tikzpicture}

\pgfplotsset{
  mybox A/.style={solid, fill=blue!35,   draw=blue!60,   fill opacity=0.45}, 
  mybox B/.style={solid, fill=green!35,  draw=green!60,  fill opacity=0.45}, 
  mybox C/.style={solid, fill=orange!50, draw=orange!60, fill opacity=0.45}, 
}

\def\offA{-0.25}
\def\offB{ 0.00}
\def\offC{ 0.25}

\pgfplotstableread[col sep=comma]{fig/tikz_data/gap_dense_alpha25.csv}\datAlphaA
\pgfplotstableread[col sep=comma]{fig/tikz_data/gap_dense_alpha50.csv}\datAlphaB
\pgfplotstableread[col sep=comma]{fig/tikz_data/gap_dense_alpha75.csv}\datAlphaC
\pgfplotstableread[col sep=comma]{fig/tikz_data/gap_dense_alpha100.csv}\datAlphaD

\newcommand{\drawbox}[5]{%
  \pgfplotstablegetelem{#2}{median}\of#1\edef\bxmed{\pgfplotsretval}%
  \pgfplotstablegetelem{#2}{q1}\of#1\edef\bxqone{\pgfplotsretval}%
  \pgfplotstablegetelem{#2}{q3}\of#1\edef\bxqthree{\pgfplotsretval}%
  \pgfplotstablegetelem{#2}{lower}\of#1\edef\bxlo{\pgfplotsretval}%
  \pgfplotstablegetelem{#2}{upper}\of#1\edef\bxhi{\pgfplotsretval}%
  \edef\tmpaddplot{%
    \noexpand\addplot+[#3,%
      boxplot prepared={%
        median=\bxmed,%
        lower quartile=\bxqone,%
        upper quartile=\bxqthree,%
        lower whisker=\bxlo,%
        upper whisker=\bxhi%
      },%
      boxplot/draw position={#5+#4}%
    ] coordinates {}%
  }%
  \tmpaddplot;%
}

\newcommand{\drawset}[2]{%
  \pgfmathtruncatemacro{\rowF}{(#2-1)*3+0}%
  \pgfmathtruncatemacro{\rowI}{(#2-1)*3+1}%
  \pgfmathtruncatemacro{\rowL}{(#2-1)*3+2}%
  \drawbox{#1}{\rowF}{mybox B}{\offB}{#2}
  \drawbox{#1}{\rowI}{mybox C}{\offC}{#2}
  \drawbox{#1}{\rowL}{mybox A}{\offA}{#2}
}

\begin{groupplot}[
  group style={
    group size=2 by 2,
    horizontal sep=1.2cm,
    vertical sep=1.6cm,
  },
  width=0.5\textwidth, height=0.30\textwidth,
  ymajorgrids,
  enlarge x limits=0.03,
  boxplot/box extend=0.18,
  boxplot/draw direction=y,
  xtick={1,...,7},
  xticklabels={1,2,3,4,5,6,7},
  xlabel={Instance set [-]},
  ylabel={rel. impr. over ILP-fsws [\%]},
  every axis plot/.append style={solid},
  every boxplot/.append style={forget plot},
  legend image post style={fill opacity=1, draw opacity=1},
  legend style={
    at={(0.02,0.98)}, anchor=north west,
    font=\scriptsize, fill=white, fill opacity=0.9, text opacity=1,
    draw=black!30,
  },
  title style={font=\small},
]

\nextgroupplot[title={$\alpha = 0.25$}, ymin=-6, ymax=30, xlabel=\empty]
\foreach \g in {1,...,7}{\drawset{\datAlphaA}{\g}}
\addlegendimage{area legend, mybox A}\addlegendentry{\LBBDG}
\addlegendimage{area legend, mybox B}\addlegendentry{\Freecp}
\addlegendimage{area legend, mybox C}\addlegendentry{\ILPC}

\nextgroupplot[
  title={$\alpha = 0.5$},
  ymin=-6,
  ymax=35,
  ylabel=\empty,
  xlabel=\empty
]
\foreach \g in {1,...,7}{\drawset{\datAlphaB}{\g}}

\nextgroupplot[title={$\alpha = 0.75$}, ymin=-10, ymax=30]
\foreach \g in {1,...,7}{\drawset{\datAlphaC}{\g}}

\nextgroupplot[
  title={$\alpha = 1.0$},
  ymin=-25,
  ymax=40,
  ylabel=\empty
]
\foreach \g in {1,...,7}{\drawset{\datAlphaD}{\g}}

\end{groupplot}
\end{tikzpicture}}
    \caption{Real improvements of different methods relative to \ILPF in dense sets.}
    \label{fig:dense_gap_to_ilp}
\end{figure}

\begin{figure}[h!]
    \centering
    \resizebox{\linewidth}{!}{\begin{tikzpicture}

\pgfplotsset{
  mybox A/.style={solid, fill=blue!35,   draw=blue!60,   fill opacity=0.45}, 
  mybox B/.style={solid, fill=green!35,  draw=green!60,  fill opacity=0.45}, 
  mybox C/.style={solid, fill=orange!50, draw=orange!60, fill opacity=0.45}, 
}

\def\offA{-0.25}
\def\offB{ 0.00}
\def\offC{ 0.25}

\pgfplotstableread[col sep=comma]{fig/tikz_data/gap_sparse_alpha25.csv}\datAlphaA
\pgfplotstableread[col sep=comma]{fig/tikz_data/gap_sparse_alpha50.csv}\datAlphaB
\pgfplotstableread[col sep=comma]{fig/tikz_data/gap_sparse_alpha75.csv}\datAlphaC
\pgfplotstableread[col sep=comma]{fig/tikz_data/gap_sparse_alpha100.csv}\datAlphaD

\newcommand{\drawbox}[5]{%
  \pgfplotstablegetelem{#2}{median}\of#1\edef\bxmed{\pgfplotsretval}%
  \pgfplotstablegetelem{#2}{q1}\of#1\edef\bxqone{\pgfplotsretval}%
  \pgfplotstablegetelem{#2}{q3}\of#1\edef\bxqthree{\pgfplotsretval}%
  \pgfplotstablegetelem{#2}{lower}\of#1\edef\bxlo{\pgfplotsretval}%
  \pgfplotstablegetelem{#2}{upper}\of#1\edef\bxhi{\pgfplotsretval}%
  \edef\tmpaddplot{%
    \noexpand\addplot+[#3,%
      boxplot prepared={%
        median=\bxmed,%
        lower quartile=\bxqone,%
        upper quartile=\bxqthree,%
        lower whisker=\bxlo,%
        upper whisker=\bxhi%
      },%
      boxplot/draw position={#5+#4}%
    ] coordinates {}%
  }%
  \tmpaddplot;%
}

\newcommand{\drawset}[2]{%
  \pgfmathtruncatemacro{\rowF}{(#2-1)*3+0}%
  \pgfmathtruncatemacro{\rowI}{(#2-1)*3+1}%
  \pgfmathtruncatemacro{\rowL}{(#2-1)*3+2}%
  \drawbox{#1}{\rowF}{mybox B}{\offB}{#2}
  \drawbox{#1}{\rowI}{mybox C}{\offC}{#2}
  \drawbox{#1}{\rowL}{mybox A}{\offA}{#2}
}

\begin{groupplot}[
  group style={
    group size=2 by 2,
    horizontal sep=1.2cm,
    vertical sep=1.6cm,
  },
  width=0.5\textwidth, height=0.30\textwidth,
  ymajorgrids,
  enlarge x limits=0.03,
  boxplot/box extend=0.18,
  boxplot/draw direction=y,
  xtick={1,...,20},
  xticklabels={1,2,3,4,5,6,7,8,9,10,11,12,13,14,15,16,17,18,19,20},
  xticklabel style={font=\tiny},
  xlabel={Instance set [-]},
  ylabel={rel. impr. over ILP-fsws [\%]},
  every axis plot/.append style={solid},
  every boxplot/.append style={forget plot},
  legend image post style={fill opacity=1, draw opacity=1},
  legend style={
    at={(0.02,0.98)}, anchor=north west,
    font=\scriptsize, fill=white, fill opacity=0.9, text opacity=1,
    draw=black!30,
  },
  title style={font=\small},
]

\nextgroupplot[title={$\alpha = 0.25$}, ymin=-55, ymax=75, xlabel=\empty]
\foreach \g in {1,...,20}{\drawset{\datAlphaA}{\g}}
\addlegendimage{area legend, mybox A}\addlegendentry{\LBBD}
\addlegendimage{area legend, mybox B}\addlegendentry{\Freecp}
\addlegendimage{area legend, mybox C}\addlegendentry{\ILPC}

\nextgroupplot[
  title={$\alpha = 0.5$},
  ymin=-70,
  ymax=110,
  ylabel=\empty,
  xlabel=\empty
]
\foreach \g in {1,...,20}{\drawset{\datAlphaB}{\g}}

\nextgroupplot[title={$\alpha = 0.75$}, ymin=-70, ymax=110]
\foreach \g in {1,...,20}{\drawset{\datAlphaC}{\g}}

\nextgroupplot[
  title={$\alpha = 1.0$},
  ymin=-70,
  ymax=95,
  ylabel=\empty
]
\foreach \g in {1,...,20}{\drawset{\datAlphaD}{\g}}

\end{groupplot}
\end{tikzpicture}}
    \caption{Real improvements of different methods relative to \ILPF in sparse sets.}
    \label{fig:sparse_gap_to_ilp}
\end{figure}

\clearpage
\newpage
\section{Detailed results for different density sets}
\label{section:detailed_results}

This section presents for each density the average of results aggregated per set and per $\alpha$ value. The best results are reported in bold. The computation times are given in seconds, and the gaps in percentages.

\begin{table}[h]
\begin{tiny}
\centering
\setlength{\tabcolsep}{2pt}
\makebox[\textwidth][c]{
\begin{tabular}{|cr|rrrr|rrrr|rrrr|}
\toprule
 &  & \multicolumn{4}{c|}{Objective value [-]} & \multicolumn{4}{c|}{Computation Time [s]} & \multicolumn{4}{c|}{Gap to LB [\%]} \\
 & & \Freecp & \ILPC & \ILPF & \LBBDG & \Freecp & \ILPC & \ILPF & \LBBDG & \Freecp & \ILPC & \ILPF & \LBBDG \\
$\alpha$ & Set &  &  &  &  &  &  &  &  &  &  &  &  \\
\midrule
\multirow[c]{7}{*}{0.25} & 1 & \textbf{1.05860} & \textbf{1.05860} & \textbf{1.05860} & \textbf{1.05860} & 1321.14 & 60.03 & \textbf{11.31} & 57.41 & 10.93 & \textbf{0.00} & \textbf{0.00} & \textbf{0.00} \\
 & 2 & \textbf{1.05505} & \textbf{1.05505} & \textbf{1.05505} & \textbf{1.05505} & TLR & 225.50 & 232.72 & \textbf{186.48} & 26.75 & \textbf{0.25} & 0.26 & 0.26 \\
 & 3 & \textbf{1.04645} & \textbf{1.04645} & \textbf{1.04645} & \textbf{1.04645} & TLR & \textbf{548.16} & 609.66 & 630.62 & 38.21 & \textbf{0.07} & 0.10 & 0.28 \\
 & 4 & \textbf{7.32443} & 7.41852 & 7.80674 & 7.68027 & TLR & 1142.44 & 1358.85 & \textbf{1070.25} & 66.95 & \textbf{1.13} & 2.40 & 1.70 \\
 & 5 & 80.93551 & 82.40018 & 86.34838 & \textbf{77.83109} & \textbf{TLR} & TLR & TLR & TLR & 89.64 & 24.93 & 29.50 & \textbf{3.89} \\
 & 6 & 97.97512 & 100.03595 & 105.51415 & \textbf{97.43803} & \textbf{TLR} & TLR & TLR & TLR & 172.56 & 5544.87 & 5007.17 & \textbf{41.96} \\
 & 7 & \textbf{82.73975} & 84.94907 & 88.58123 & 84.16912 & TLR & 1415.57 & 1352.68 & \textbf{1132.79} & 72.30 & 160.65 & 148.05 & \textbf{44.96} \\
\cline{1-14}
\multirow[c]{7}{*}{0.5} & 1 & \textbf{1.11721} & \textbf{1.11721} & \textbf{1.11721} & \textbf{1.11721} & 1538.67 & 136.00 & \textbf{73.27} & 185.64 & 33.28 & \textbf{0.00} & \textbf{0.00} & 0.17 \\
 & 2 & \textbf{1.08590} & \textbf{1.08590} & \textbf{1.08590} & \textbf{1.08590} & TLR & \textbf{505.29} & 644.50 & 800.25 & 39.04 & \textbf{0.46} & 0.61 & 1.13 \\
 & 3 & \textbf{1.08238} & \textbf{1.08238} & \textbf{1.08238} & \textbf{1.08238} & TLR & 1308.08 & 1268.98 & \textbf{1201.41} & 71.10 & 2.19 & \textbf{1.88} & 2.11 \\
 & 4 & 16.59966 & \textbf{16.13410} & 18.41283 & 16.91629 & TLR & \textbf{1642.80} & 1742.26 & 1661.19 & 123.92 & \textbf{2.71} & 6.07 & 3.61 \\
 & 5 & 152.09141 & 161.23074 & 170.34044 & \textbf{150.40514} & TLR & TLR & TLR & \textbf{1780.22} & 87.73 & 20.75 & 26.10 & \textbf{2.21} \\
 & 6 & 194.42347 & 200.26270 & 210.00732 & \textbf{191.02657} & \textbf{TLR} & TLR & TLR & TLR & 174.98 & 8111.50 & 7567.59 & \textbf{75.73} \\
 & 7 & \textbf{163.94154} & 168.47701 & 176.14825 & 166.26513 & TLR & 1413.53 & 1354.23 & \textbf{1132.26} & 72.69 & 153.16 & 148.50 & \textbf{44.37} \\
\cline{1-14}
\multirow[c]{7}{*}{0.75} & 1 & \textbf{1.10964} & \textbf{1.10964} & \textbf{1.10964} & \textbf{1.10964} & TLR & 382.29 & 317.62 & \textbf{264.16} & 40.76 & 0.19 & 0.12 & \textbf{0.00} \\
 & 2 & \textbf{1.09154} & 1.09175 & 1.09179 & 1.09191 & TLR & \textbf{1474.39} & 1561.03 & 1540.80 & 65.68 & \textbf{2.07} & 2.73 & 2.86 \\
 & 3 & 10.39699 & \textbf{10.17224} & 10.18724 & 10.17340 & TLR & TLR & TLR & \textbf{1799.39} & 86.40 & 6.93 & 8.03 & \textbf{6.67} \\
 & 4 & \textbf{28.32943} & 29.35689 & 31.93691 & 28.91132 & \textbf{TLR} & TLR & TLR & TLR & 165.17 & 6.73 & 9.52 & \textbf{6.53} \\
 & 5 & 234.38472 & 239.65550 & 255.20947 & \textbf{225.22221} & TLR & TLR & TLR & \textbf{1757.85} & 88.36 & 50.12 & 30.37 & \textbf{2.67} \\
 & 6 & 290.29475 & 300.76084 & 314.50048 & \textbf{281.58823} & \textbf{TLR} & TLR & TLR & TLR & 178.78 & 5634.02 & 2792.82 & \textbf{41.29} \\
 & 7 & \textbf{242.25874} & 252.53526 & 263.71497 & 248.55957 & TLR & 1503.01 & 1350.43 & \textbf{1132.21} & 72.70 & 166.58 & 141.40 & \textbf{47.32} \\
\cline{1-14}
\multirow[c]{7}{*}{1.0} & 1 & \textbf{49408.81750} & \textbf{49408.81750} & \textbf{49408.81750} & \textbf{49408.81750} & TLR & 292.15 & \textbf{225.00} & 228.09 & 64.93 & 0.07 & \textbf{0.03} & 0.16 \\
 & 2 & 108772.76421 & \textbf{107160.87684} & 107161.49211 & 107161.61316 & TLR & 354.45 & 282.75 & \textbf{156.03} & 118.42 & \textbf{0.00} & \textbf{0.00} & 0.01 \\
 & 3 & 144168.30895 & 139158.31105 & 139449.32421 & \textbf{139158.01211} & TLR & 755.11 & 910.36 & \textbf{436.88} & 124.05 & 0.02 & 0.27 & \textbf{0.01} \\
 & 4 & 173414.08050 & 166458.56800 & 180866.29849 & \textbf{163275.10700} & TLR & 1616.52 & 1648.86 & \textbf{1202.86} & 196.45 & 1.98 & 10.19 & \textbf{0.06} \\
 & 5 & 265937.62500 & 262395.06300 & 283524.59650 & \textbf{250761.59450} & TLR & TLR & TLR & \textbf{1442.99} & 88.88 & 5.54 & 12.05 & \textbf{0.32} \\
 & 6 & 393629.11450 & 407528.09250 & 426675.22200 & \textbf{368332.18250} & TLR & TLR & TLR & \textbf{1608.22} & 174.00 & 49.73 & 15.43 & \textbf{0.13} \\
 & 7 & \textbf{377964.79000} & 391617.81950 & 411972.01100 & 379780.83050 & TLR & 1509.29 & 1355.02 & \textbf{1181.05} & 72.86 & 159.47 & 130.97 & \textbf{45.95} \\
\bottomrule
\end{tabular}
}
\end{tiny}
\caption{Detailed results of the dense density sets. Values are averaged over instances within each set. ``TLR'' = Time Limit Reached ($\geq 1800$\,s). Best values are reported in bold.}
\label{tab:agg-results}
\end{table}

\begin{table}
\begin{tiny}
\centering
\setlength{\tabcolsep}{2pt}
\makebox[\textwidth][c]{
\begin{tabular}{|cr|rrrr|rrrr|rrrr|}
\toprule
 &  & \multicolumn{4}{c|}{Objective value [-]} & \multicolumn{4}{c|}{Computation Time [s]} & \multicolumn{4}{c|}{Final gap to LB [\%]} \\
 $\alpha$ & Set & \texttt{freecp} & \texttt{ILP-cpws} & \texttt{ILP-fsws} & \LBBDG & \texttt{freecp} & \texttt{ILP-cpws} & \texttt{ILP-fsws} & \LBBDG & \texttt{freecp} & \texttt{ILP-cpws} & \texttt{ILP-fsws} & \LBBDG \\
\midrule
\multirow[c]{15}{*}{0.25} & 1 & \textbf{1.08459} & \textbf{1.08459} & \textbf{1.08459} & \textbf{1.08459} & 349.96 & 48.08 & \textbf{20.12} & 36.81 & 23.88 & \textbf{0.00} & \textbf{0.00} & \textbf{0.00} \\
 & 2 & \textbf{1.05195} & \textbf{1.05195} & \textbf{1.05195} & \textbf{1.05195} & 871.22 & 83.85 & 252.36 & \textbf{57.51} & 3.14 & \textbf{0.00} & \textbf{0.00} & \textbf{0.00} \\
 & 3 & 1.07074 & 1.07055 & 1.10652 & \textbf{1.07026} & 1764.58 & \textbf{340.66} & 1564.50 & 426.34 & 23.86 & \textbf{0.25} & 12.47 & 0.26 \\
 & 4 & 1.05507 & 1.06199 & 1.13507 & \textbf{1.05270} & TLR & 854.79 & 1796.06 & \textbf{278.73} & 10.84 & 0.99 & 23.99 & \textbf{0.16} \\
 & 5 & 1.04661 & 1.04776 & 1.12266 & \textbf{1.04534} & TLR & 816.89 & TLR & \textbf{446.09} & 16.29 & 0.48 & 28.09 & \textbf{0.11} \\
 & 6 & 1.04790 & 1.05408 & 1.14501 & \textbf{1.04505} & TLR & 1218.21 & TLR & \textbf{514.52} & 32.56 & 1.31 & 31.34 & \textbf{0.15} \\
 & 7 & 1.05982 & 1.06266 & 1.13424 & \textbf{1.05974} & TLR & 990.98 & TLR & \textbf{504.06} & 41.58 & 0.59 & 32.16 & \textbf{0.58} \\
 & 8 & \textbf{1.07104} & 1.07438 & 1.15224 & 1.07317 & TLR & 1219.45 & TLR & \textbf{1201.63} & 36.90 & \textbf{0.54} & 33.31 & 1.46 \\
 & 9 & 1.04976 & 1.03908 & 1.10634 & \textbf{1.03680} & TLR & 949.61 & TLR & \textbf{850.48} & 18.79 & \textbf{0.38} & 33.59 & 0.69 \\
 & 10 & \textbf{1.01389} & 1.02791 & 1.06565 & 1.01404 & TLR & TLR & TLR & \textbf{1296.81} & 33.71 & 1.93 & 37.79 & \textbf{0.55} \\
 & 11 & \textbf{1.04572} & 1.05748 & 1.09257 & 1.04614 & TLR & 1630.08 & 1744.89 & \textbf{1500.07} & 25.49 & 1.77 & 5.59 & \textbf{0.67} \\
 & 12 & \textbf{1.03324} & 1.04158 & 1.08769 & 1.03338 & TLR & 1740.37 & 1752.09 & \textbf{981.44} & 27.06 & 6.73 & 11.32 & \textbf{0.51} \\
 & 13 & 3.40062 & 3.41969 & 3.60836 & \textbf{3.31770} & TLR & 1783.06 & 1759.36 & \textbf{1202.39} & 30.88 & 7.52 & 16.54 & \textbf{0.75} \\
 & 14 & 11.19105 & 11.24702 & 11.96225 & \textbf{10.66233} & TLR & TLR & 1764.39 & \textbf{1422.41} & 27.40 & 26.46 & 30.31 & \textbf{0.45} \\
 & 15 & 5.21018 & 5.21865 & 5.63304 & \textbf{4.85424} & TLR & TLR & 1771.08 & \textbf{787.89} & 17.63 & 105.91 & 105.90 & \textbf{0.06} \\
\cline{1-14}
\multirow[c]{15}{*}{0.5} & 1 & \textbf{1.14210} & \textbf{1.14210} & \textbf{1.14210} & \textbf{1.14210} & 681.41 & 69.47 & \textbf{61.51} & 72.08 & 45.16 & \textbf{0.00} & \textbf{0.00} & \textbf{0.00} \\
 & 2 & \textbf{1.10034} & \textbf{1.10034} & \textbf{1.10034} & \textbf{1.10034} & 1432.25 & 84.68 & 294.92 & \textbf{70.68} & 10.12 & \textbf{0.00} & \textbf{0.00} & \textbf{0.00} \\
 & 3 & \textbf{1.13696} & 1.13726 & 1.14840 & 1.13768 & TLR & \textbf{716.94} & 1665.01 & 824.88 & 53.42 & \textbf{0.64} & 10.40 & 1.78 \\
 & 4 & \textbf{1.10261} & 1.10597 & 1.16521 & 1.10494 & TLR & \textbf{1223.63} & TLR & 1391.25 & 22.87 & 2.60 & 18.42 & \textbf{2.45} \\
 & 5 & \textbf{1.09023} & 1.09062 & 1.14469 & 1.09044 & TLR & 1482.18 & TLR & \textbf{1131.51} & 36.78 & 2.02 & 19.94 & \textbf{1.59} \\
 & 6 & \textbf{1.08917} & 1.09325 & 1.16505 & 1.09037 & TLR & 1600.48 & TLR & \textbf{1324.10} & 68.95 & 2.21 & 22.31 & \textbf{1.28} \\
 & 7 & \textbf{1.11429} & 1.11906 & 1.17585 & 1.11633 & TLR & \textbf{979.56} & TLR & 1208.35 & 78.68 & \textbf{1.34} & 23.72 & 3.46 \\
 & 8 & \textbf{1.14059} & 1.15011 & 1.20299 & 1.14200 & TLR & \textbf{1543.41} & TLR & 1598.82 & 71.60 & \textbf{4.90} & 25.19 & 5.28 \\
 & 9 & \textbf{1.09764} & 1.10026 & 1.14592 & 1.09779 & TLR & \textbf{1332.94} & TLR & 1786.07 & 35.62 & \textbf{2.30} & 23.68 & 3.06 \\
 & 10 & \textbf{1.02097} & 1.03175 & 1.06224 & 1.02704 & TLR & 1761.59 & TLR & \textbf{1606.72} & 68.47 & \textbf{1.83} & 25.99 & 2.96 \\
 & 11 & \textbf{1.08772} & 1.10206 & 1.12132 & 1.08818 & TLR & \textbf{1666.37} & 1719.87 & 1794.28 & 48.82 & \textbf{2.13} & 4.94 & 2.62 \\
 & 12 & 1.06384 & 1.07465 & 1.10225 & \textbf{1.06347} & TLR & \textbf{1696.71} & 1749.30 & 1709.05 & 50.53 & 8.65 & 11.94 & \textbf{2.59} \\
 & 13 & 5.55047 & 5.49835 & 5.85881 & \textbf{5.32971} & TLR & TLR & 1758.87 & \textbf{1584.83} & 52.64 & 9.29 & 12.46 & \textbf{2.78} \\
 & 14 & 20.03995 & 20.39942 & 21.62985 & \textbf{19.06893} & TLR & TLR & 1763.11 & \textbf{1605.99} & 40.33 & 10.37 & 19.50 & \textbf{1.77} \\
 & 15 & 10.10940 & 10.35888 & 11.25391 & \textbf{9.53414} & TLR & TLR & 1769.46 & \textbf{1418.19} & 38.32 & 167.70 & 156.43 & \textbf{0.51} \\
\cline{1-14}
\multirow[c]{15}{*}{0.75} & 1 & \textbf{1.13832} & \textbf{1.13832} & \textbf{1.13832} & \textbf{1.13832} & 901.69 & 87.81 & 89.64 & \textbf{60.90} & 75.51 & \textbf{0.00} & \textbf{0.00} & \textbf{0.00} \\
 & 2 & \textbf{1.11362} & \textbf{1.11362} & \textbf{1.11362} & \textbf{1.11362} & 1746.07 & 161.22 & 417.25 & \textbf{108.25} & 21.68 & \textbf{0.00} & 0.16 & \textbf{0.00} \\
 & 3 & 1.15451 & \textbf{1.15079} & 1.16005 & 1.15177 & TLR & 1437.21 & 1771.71 & \textbf{1326.31} & 88.32 & \textbf{3.20} & 9.50 & 3.49 \\
 & 4 & 1.11382 & 1.11410 & 1.14391 & \textbf{1.11301} & TLR & TLR & TLR & \textbf{1722.50} & 38.47 & 4.95 & 11.00 & \textbf{3.18} \\
 & 5 & 1.11919 & 1.11853 & 1.14569 & \textbf{1.11369} & TLR & TLR & TLR & \textbf{1762.97} & 60.59 & 5.20 & 12.47 & \textbf{4.16} \\
 & 6 & 1.12251 & 1.12756 & 1.16635 & \textbf{1.12227} & TLR & TLR & TLR & \textbf{1560.15} & 107.06 & 6.43 & 14.53 & \textbf{5.06} \\
 & 7 & \textbf{1.12241} & 1.13846 & 1.17222 & 1.12290 & TLR & \textbf{1601.52} & TLR & TLR & 100.20 & 6.36 & 16.27 & \textbf{5.51} \\
 & 8 & \textbf{1.03726} & 1.14741 & 1.23506 & 1.10494 & TLR & \textbf{1716.94} & TLR & 1783.66 & 88.27 & 9.23 & 19.98 & \textbf{7.28} \\
 & 9 & 1.08480 & 1.08635 & 1.12071 & \textbf{1.07815} & TLR & \textbf{1677.04} & TLR & TLR & 50.83 & \textbf{5.25} & 17.60 & 6.17 \\
 & 10 & \textbf{1.01578} & 1.02162 & 1.05158 & 1.01884 & TLR & TLR & TLR & \textbf{1684.74} & 98.55 & \textbf{4.81} & 18.69 & 5.43 \\
 & 11 & 1.12130 & 1.12681 & 1.14778 & \textbf{1.11638} & TLR & TLR & \textbf{1750.61} & TLR & 70.30 & 6.20 & 8.44 & \textbf{5.97} \\
 & 12 & 1.06878 & 1.07141 & 1.09077 & \textbf{1.06523} & TLR & 1774.10 & \textbf{1755.41} & TLR & 43.93 & 6.03 & 8.51 & \textbf{5.03} \\
 & 13 & 8.06592 & 8.05083 & 8.66011 & \textbf{7.78350} & TLR & TLR & 1758.96 & \textbf{1666.11} & 76.15 & 7.47 & 10.99 & \textbf{5.73} \\
 & 14 & 29.31666 & 30.21150 & 31.93878 & \textbf{28.13772} & TLR & TLR & 1763.84 & \textbf{1759.66} & 61.01 & 18.93 & 27.20 & \textbf{4.20} \\
 & 15 & 13.88790 & 14.20358 & 15.55150 & \textbf{13.10518} & TLR & TLR & 1769.70 & \textbf{1742.26} & 71.66 & 26.92 & 814.91 & \textbf{1.83} \\
\cline{1-14}
\multirow[c]{15}{*}{1.0} & 1 & \textbf{18836.80250} & \textbf{18836.80250} & \textbf{18836.80250} & \textbf{18836.80250} & 891.27 & 56.27 & \textbf{5.98} & 52.22 & 107.84 & \textbf{0.00} & \textbf{0.00} & \textbf{0.00} \\
 & 2 & \textbf{36342.00267} & \textbf{36342.00267} & \textbf{36342.00267} & \textbf{36342.00267} & TLR & 66.55 & \textbf{13.09} & 62.09 & 35.22 & \textbf{0.00} & \textbf{0.00} & \textbf{0.00} \\
 & 3 & \textbf{45363.12875} & \textbf{45363.12875} & \textbf{45363.12875} & \textbf{45363.12875} & TLR & 89.34 & 72.59 & \textbf{67.20} & 192.71 & \textbf{0.00} & \textbf{0.00} & \textbf{0.00} \\
 & 4 & 62794.54909 & \textbf{62751.56455} & \textbf{62751.56455} & \textbf{62751.56455} & TLR & 145.17 & 129.38 & \textbf{80.06} & 49.63 & \textbf{0.00}& \textbf{0.00} & \textbf{0.00} \\
 & 5 & 76688.37182 & \textbf{76266.03000} & \textbf{76266.03000} & \textbf{76266.03000} & TLR & 231.32 & 239.58 & \textbf{98.22} & 96.05 & \textbf{0.00} & \textbf{0.00} & \textbf{0.00} \\
 & 6 & 112013.69842 & \textbf{109582.86315} & \textbf{109582.86315} & \textbf{109582.86315} & TLR & 487.40 & 520.72 & \textbf{127.83} & 143.99 & \textbf{0.00} & \textbf{0.00} & \textbf{0.00} \\
 & 7 & 118167.45105 & \textbf{114656.45232} & 114672.92411 & \textbf{114656.45232} & TLR & 842.11 & 906.26 & \textbf{165.40} & 120.52 & \textbf{0.00} & 0.03 & \textbf{0.00} \\
 & 8 & 129669.98737 & 124555.63158 & 124856.84789 & \textbf{124534.03315} & TLR & 1199.41 & 1219.21 & \textbf{224.64} & 137.12 & 0.03 & 0.19 & \textbf{0.00} \\
 & 9 & 183904.69526 & 177028.07632 & 179707.40632 & \textbf{177028.01519} & TLR & 1299.15 & 1382.82 & \textbf{333.15} & 61.48 & 0.02 & 0.81 & \textbf{0.00} \\
 & 10 & 189091.72050 & 186672.55000 & 194835.94949 & \textbf{181718.76700} & TLR & 1688.72 & 1678.52 & \textbf{288.47} & 120.53 & 2.17 & 5.83 & \textbf{0.00} \\
 & 11 & 184661.06550 & 181347.22400 & 188558.34750 & \textbf{177171.48050} & TLR & 1597.09 & 1646.59 & \textbf{243.89} & 90.58 & 1.50 & 5.34 & \textbf{0.01} \\
 & 12 & 221676.40250 & 215938.58150 & 228079.62244 & \textbf{211589.56950} & TLR & 1765.82 & 1749.87 & \textbf{282.67} & 101.02 & 1.58 & 6.59 & \textbf{0.00} \\
 & 13 & 204602.23550 & 196880.24150 & 216430.08050 & \textbf{192963.33359} & TLR & TLR & 1758.98 & \textbf{712.21} & 98.62 & 2.08 & 11.76 & \textbf{0.01} \\
 & 14 & 251520.22900 & 249023.57500 & 281722.66250 & \textbf{236034.34960} & TLR & TLR & 1764.36 & \textbf{751.54} & 157.92 & 5.69 & 17.90 & \textbf{0.01} \\
 & 15 & 256199.55950 & 249610.25150 & 289114.31100 & \textbf{242278.01933} & TLR & TLR & 1770.96 & \textbf{878.84} & 238.85 & 2.93 & 17.35 & \textbf{0.01} \\
\bottomrule
\end{tabular}
}
\end{tiny}
\caption{Detailed results for standard density sets. Values are averaged over instances within each set. ``TLR'' = Time Limit Reached ($\geq 1800$\,s). Best values are reported in bold. Computation times are given in seconds, and gaps are reported as percentages.}
\label{tab:agg-results}
\end{table}

\begin{table}
\begin{tiny}
\centering
\setlength{\tabcolsep}{2pt}
\makebox[\textwidth][c]{
\begin{tabular}{|cr|rrrr|rrrr|rrrr|}
\toprule
 &  & \multicolumn{4}{c|}{Objective value [-]} & \multicolumn{4}{c|}{Computation Time [s]} & \multicolumn{4}{c|}{Gap to LB [\%]} \\
 & & \Freecp & \ILPC & \ILPF & \LBBD & \Freecp & \ILPC & \ILPF & \LBBD & \Freecp & \ILPC & \ILPF & \LBBD \\
$\alpha$ & Set &  &  &  &  &  &  &  &  &  &  &  &  \\
\midrule
\multirow[c]{20}{*}{0.25} & 1 & \textbf{1.09023} & \textbf{1.09023} & \textbf{1.09023} & \textbf{1.09023} & 6.81 & 9.69 & \textbf{4.08} & 7.50 & 0.01 & \textbf{0.00} & \textbf{0.00} & \textbf{0.00} \\
 & 2 & \textbf{1.07280} & 1.07512 & 1.07656 & \textbf{1.07280} & 873.01 & 531.25 & 787.55 & \textbf{70.82} & 5.52 & 0.62 & 0.84 & \textbf{0.00} \\
 & 3 & 1.08411 & 1.08868 & 1.09532 & \textbf{1.08197} & 1312.74 & 1204.90 & 1255.79 & \textbf{238.56} & 6.20 & 1.06 & 1.61 & \textbf{0.00} \\
 & 4 & 1.15686 & 1.16207 & 1.22160 & \textbf{1.15340} & 1649.63 & 1656.86 & 1504.31 & \textbf{572.74} & 12.04 & 1.42 & 5.47 & \textbf{0.00} \\
 & 5 & \textbf{1.16648} & 1.17881 & 1.21112 & 1.16688 & 1632.16 & 1577.41 & 1644.91 & \textbf{1269.00} & 221.80 & 2.77 & 5.60 & \textbf{1.05} \\
 & 6 & \textbf{1.09679} & 1.13246 & 1.20050 & 1.09843 & TLR & TLR & TLR & \textbf{1724.52} & 11.40 & 7.27 & 12.45 & \textbf{2.98} \\
 & 7 & 1.16676 & 1.17364 & 1.31607 & \textbf{1.15752} & TLR & \textbf{1575.86} & 1579.20 & 1739.10 & 219.30 & 3.13 & 13.85 & \textbf{1.64} \\
 & 8 & 1.31094 & 1.30386 & 1.63713 & \textbf{1.27765} & TLR & 1684.67 & 1661.99 & \textbf{1240.86} & 1126.71 & 8.02 & 26.52 & \textbf{3.57} \\
 & 9 & 1.04436 & 1.04715 & 1.45101 & \textbf{1.02183} & \textbf{TLR} & TLR & TLR & TLR & 14.07 & 6.11 & 28.25 & \textbf{3.49} \\
 & 10 & \textbf{1.06016} & 1.08312 & 1.38801 & 1.06168 & TLR & TLR & TLR & \textbf{1567.72} & 14.77 & 5.44 & 26.07 & \textbf{2.44} \\
 & 11 & \textbf{1.08755} & 1.11401 & 1.49809 & 1.09618 & \textbf{TLR} & TLR & TLR & TLR & 119.08 & 4.89 & 28.55 & \textbf{3.11} \\
 & 12 & \textbf{1.08884} & 1.19911 & 1.48523 & 1.14369 & TLR & TLR & TLR & \textbf{1576.00} & 123.47 & 8.37 & 26.12 & \textbf{2.84} \\
 & 13 & \textbf{0.88733} & 1.00737 & 1.41042 & 0.96489 & \textbf{TLR} & TLR & TLR & TLR & 8.14 & 7.45 & 26.72 & \textbf{3.82} \\
 & 14 & \textbf{0.92505} & 1.01825 & 1.70346 & 1.00792 & TLR & TLR & TLR & \textbf{1739.54} & 113.36 & 7.42 & 32.61 & \textbf{3.51} \\
 & 15 & 2.20333 & 2.29312 & 3.52375 & \textbf{0.97253} & \textbf{TLR} & TLR & TLR & TLR & 15.39 & 7.73 & 28.23 & \textbf{5.47} \\
 & 16 & \textbf{0.89375} & 1.06637 & 1.36038 & 0.97866 & \textbf{TLR} & TLR & TLR & TLR & 181.90 & 7.23 & 23.91 & \textbf{4.63} \\
 & 17 & \textbf{0.80364} & 0.86932 & 1.44146 & 0.86289 & \textbf{TLR} & TLR & TLR & TLR & 41.83 & 3.29 & 29.10 & \textbf{2.68} \\
 & 18 & \textbf{0.77309} & 0.93138 & 1.13899 & 0.91351 & TLR & TLR & TLR & \textbf{1752.57} & 5.57 & 5.87 & 20.36 & \textbf{3.42} \\
 & 19 & 2.79964 & 4.80577 & 7.19811 & \textbf{2.63461} & TLR & TLR & TLR & \textbf{1757.45} & 11.68 & 11.78 & 28.85 & \textbf{4.04} \\
 & 20 & 3.51991 & 3.74243 & 5.45435 & \textbf{2.91696} & TLR & 1720.00 & \textbf{1631.06} & 1718.24 & 80.07 & 5.79 & 32.91 & \textbf{2.61} \\
\cline{1-14}
\multirow[c]{20}{*}{0.5} & 1 & \textbf{1.08398} & \textbf{1.08398} & \textbf{1.08398} & \textbf{1.08398} & 7.36 & 7.93 & \textbf{3.30} & 5.65 & 0.01 & \textbf{0.00} & \textbf{0.00} & \textbf{0.00} \\
 & 2 & \textbf{1.10739} & \textbf{1.10739} & \textbf{1.10739} & \textbf{1.10739} & 687.02 & 53.92 & 65.37 & \textbf{45.40} & 5.93 & \textbf{0.00} & \textbf{0.00} & \textbf{0.00} \\
 & 3 & \textbf{1.09877} & 1.10299 & 1.10299 & 1.09924 & 1231.91 & 835.48 & 810.02 & \textbf{151.30} & 8.06 & 0.78 & 0.78 & \textbf{0.00} \\
 & 4 & \textbf{1.17812} & 1.18087 & 1.18684 & 1.17823 & 1045.44 & 673.46 & 848.34 & \textbf{169.69} & 5.24 & 0.42 & 0.99 & \textbf{0.00} \\
 & 5 & 1.15669 & 1.16182 & 1.19310 & \textbf{1.15150} & 1631.32 & 1535.99 & 1545.09 & \textbf{833.38} & 434.39 & 1.73 & 4.31 & \textbf{0.33} \\
 & 6 & 1.12451 & 1.14394 & 1.18943 & \textbf{1.11696} & TLR & TLR & TLR & \textbf{1371.63} & 21.04 & 5.72 & 9.75 & \textbf{1.99} \\
 & 7 & 1.16885 & 1.18468 & 1.28593 & \textbf{1.15879} & 1800.00 & 1367.26 & 1521.87 & \textbf{1037.38} & 485.69 & 3.97 & 10.16 & \textbf{0.56} \\
 & 8 & 1.31984 & 1.23629 & 1.61115 & \textbf{1.21985} & 1757.01 & 1312.71 & 1451.92 & \textbf{987.61} & 2120.18 & 3.48 & 19.61 & \textbf{1.50} \\
 & 9 & 1.05489 & \textbf{1.01059} & 1.49786 & 1.04348 & TLR & \textbf{1692.87} & TLR & 1703.18 & 25.63 & 5.23 & 29.67 & \textbf{2.15} \\
 & 10 & \textbf{1.06482} & 1.07143 & 1.50049 & 1.13253 & TLR & TLR & TLR & \textbf{1458.43} & 25.86 & 5.03 & 29.46 & \textbf{2.55} \\
 & 11 & 1.14481 & 1.16425 & 1.62094 & \textbf{1.13989} & TLR & TLR & TLR & \textbf{1474.52} & 215.05 & 7.34 & 31.89 & \textbf{2.35} \\
 & 12 & \textbf{1.13701} & 1.14284 & 2.01879 & 1.22036 & TLR & 1761.86 & 1779.82 & \textbf{1657.10} & 216.25 & 6.09 & 28.39 & \textbf{3.33} \\
 & 13 & \textbf{0.75005} & 0.96838 & 1.51021 & 0.93357 & TLR & TLR & \textbf{TLR} & TLR & 14.18 & 7.63 & 29.20 & \textbf{4.59} \\
 & 14 & \textbf{0.77957} & 1.03886 & 2.27411 & 0.97402 & TLR & TLR & TLR & \textbf{1726.02} & 198.45 & 12.30 & 41.80 & \textbf{5.40} \\
 & 15 & 2.94050 & 3.41132 & 5.23333 & \textbf{2.66294} & TLR & TLR & TLR & \textbf{1673.65} & 21.27 & 10.19 & 32.20 & \textbf{4.02} \\
 & 16 & \textbf{0.70172} & 0.98988 & 1.41011 & 0.91431 & TLR & TLR & TLR & \textbf{1456.95} & 358.91 & 9.29 & 27.89 & \textbf{3.46} \\
 & 17 & \textbf{0.58883} & 0.71334 & 1.47722 & 0.75661 & TLR & TLR & TLR & \textbf{1787.09} & 84.75 & 5.19 & 32.00 & \textbf{3.80} \\
 & 18 & \textbf{0.52446} & 0.75656 & 1.17983 & 0.79561 & TLR & TLR & TLR & \textbf{1744.16} & 12.31 & 8.99 & 27.09 & \textbf{6.51} \\
 & 19 & 7.64672 & 7.41469 & 12.32683 & \textbf{4.11329} & TLR & TLR & TLR & \textbf{1741.21} & 15.61 & 14.61 & 32.93 & \textbf{4.84} \\
 & 20 & 5.91176 & 6.33539 & 9.24639 & \textbf{0.81636} & TLR & 1695.92 & \textbf{1692.25} & TLR & 148.49 & 7.39 & 29.08 & \textbf{3.65} \\
\cline{1-14}
\multirow[c]{20}{*}{0.75} & 1 & \textbf{1.06916} & \textbf{1.06916} & \textbf{1.06916} & \textbf{1.06916} & 5.64 & 8.59 & \textbf{3.17} & 6.35 & 0.01 & \textbf{0.00} & \textbf{0.00} & \textbf{0.00} \\
 & 2 & \textbf{1.09688} & \textbf{1.09688} & \textbf{1.09688} & \textbf{1.09688} & 150.14 & 64.14 & 67.11 & \textbf{48.15} & 0.01 & \textbf{0.00} & \textbf{0.00} & \textbf{0.00} \\
 & 3 & \textbf{1.13331} & \textbf{1.13331} & \textbf{1.13331} & \textbf{1.13331} & 420.79 & 61.12 & \textbf{21.59} & 63.57 & 4.09 & \textbf{0.00} & \textbf{0.00} & \textbf{0.00} \\
 & 4 & \textbf{1.11885} & 1.12014 & 1.12545 & 1.11938 & 924.66 & 710.18 & 664.26 & \textbf{156.52} & 7.65 & 0.16 & 0.62 & \textbf{0.00} \\
 & 5 & 1.13391 & 1.13058 & 1.13055 & \textbf{1.12281} & 1632.00 & 1220.33 & 1178.31 & \textbf{472.97} & 664.45 & 1.04 & 1.02 & \textbf{0.23} \\
 & 6 & \textbf{1.08118} & 1.09257 & 1.13770 & 1.08174 & TLR & 1674.21 & 1552.25 & \textbf{1292.29} & 34.18 & 2.40 & 5.37 & \textbf{1.00} \\
 & 7 & 1.12975 & 1.12389 & 1.13115 & \textbf{1.11830} & 1800.00 & 1010.98 & 970.81 & \textbf{360.32} & 881.07 & 1.48 & 2.12 & \textbf{0.16} \\
 & 8 & 1.13502 & \textbf{1.12639} & 1.13347 & 1.13284 & 1725.88 & 922.05 & 1004.40 & \textbf{620.79} & 616.88 & \textbf{1.11} & 1.82 & 1.47 \\
 & 9 & 1.02646 & \textbf{1.00199} & 1.10871 & 1.01501 & TLR & TLR & 1767.38 & \textbf{755.29} & 36.69 & 8.85 & 23.06 & \textbf{1.06} \\
 & 10 & \textbf{1.03711} & 1.06779 & 1.42239 & 1.11413 & TLR & TLR & TLR & \textbf{1431.29} & 36.39 & 7.19 & 25.18 & \textbf{0.62} \\
 & 11 & 1.12342 & 1.14591 & 1.62332 & \textbf{1.11716} & TLR & TLR & TLR & \textbf{1106.89} & 293.88 & 7.55 & 31.58 & \textbf{1.45} \\
 & 12 & \textbf{1.07760} & 1.13723 & 1.36671 & 1.15214 & TLR & 1721.66 & 1645.78 & \textbf{723.05} & 377.97 & 11.50 & 21.98 & \textbf{0.26} \\
 & 13 & 0.94024 & \textbf{0.59146} & 1.24210 & 0.76857 & \textbf{TLR} & TLR & TLR & TLR & 35.66 & 3.79 & 27.60 & \textbf{3.06} \\
 & 14 & \textbf{0.75900} & 0.84168 & 1.27255 & 0.76369 & TLR & TLR & TLR & \textbf{1186.79} & 650.40 & 10.84 & 32.00 & \textbf{1.59} \\
 & 15 & 16.18649 & 16.69934 & 27.73834 & \textbf{3.00898} & TLR & TLR & TLR & \textbf{1365.55} & 28.23 & 7.26 & 28.84 & \textbf{2.67} \\
 & 16 & 0.58644 & 0.73353 & 0.95812 & \textbf{0.58406} & TLR & TLR & TLR & \textbf{1301.02} & 1038.00 & 13.45 & 24.76 & \textbf{2.73} \\
 & 17 & \textbf{0.42141} & 0.55061 & 0.72388 & 0.54802 & TLR & TLR & TLR & \textbf{1601.44} & 225.64 & 5.22 & 23.43 & \textbf{3.56} \\
 & 18 & \textbf{0.27476} & 0.28152 & 0.32659 & 0.27863 & \textbf{TLR} & TLR & TLR & TLR & 37.51 & 5.51 & 18.54 & \textbf{4.20} \\
 & 19 & 11.83949 & 19.55914 & 30.76335 & \textbf{8.54565} & TLR & TLR & TLR & \textbf{1381.50} & 25.20 & 17.93 & 35.64 & \textbf{4.11} \\
 & 20 & 9.08337 & 10.43514 & 15.64535 & \textbf{7.84095} & TLR & 1621.19 & 1565.19 & \textbf{1357.25} & 243.00 & 8.07 & 25.48 & \textbf{2.71} \\
\cline{1-14}
\multirow[c]{20}{*}{1.0} & 1 & \textbf{3098.32700} & \textbf{3098.32700} & \textbf{3098.32700} & \textbf{3098.32700} & 181.35 & 8.19 & \textbf{1.17} & 7.30 & 1576.27 & \textbf{0.00} & \textbf{0.00} & \textbf{0.00} \\
 & 2 & \textbf{6865.85800} & \textbf{6865.85800} & \textbf{6865.85800} & \textbf{6865.85800} & 211.71 & 23.41 & \textbf{2.83} & 20.50 & 8952.38 & \textbf{0.00} & \textbf{0.00} & \textbf{0.00} \\
 & 3 & \textbf{13816.60667} & \textbf{13816.60667} & \textbf{13816.60667} & \textbf{13816.60667} & 295.50 & 33.55 & \textbf{2.95} & 29.96 & 3.10 & \textbf{0.00} & \textbf{0.00} & \textbf{0.00} \\
 & 4 & \textbf{9934.80000} & \textbf{9934.80000} & \textbf{9934.80000} & \textbf{9934.80000} & 702.16 & 53.06 & \textbf{7.09} & 51.11 & 10.77 & \textbf{0.00} & \textbf{0.00} & \textbf{0.00} \\
 & 5 & \textbf{13695.72000} & \textbf{13695.72000} & \textbf{13695.72000} & \textbf{13695.72000} & 1433.40 & 61.35 & \textbf{19.29} & 56.42 & 28.88 & \textbf{0.00} & \textbf{0.00} & \textbf{0.00} \\
 & 6 & \textbf{14319.59000} & \textbf{14319.59000} & \textbf{14319.59000} & \textbf{14319.59000} & TLR & 75.46 & \textbf{30.99} & 64.80 & 43.56 & \textbf{0.00} & \textbf{0.00} & \textbf{0.00} \\
 & 7 & \textbf{22062.46000} & \textbf{22062.46000} & \textbf{22062.46000} & \textbf{22062.46000} & TLR & 77.38 & \textbf{46.03} & 62.71 & 51.29 & \textbf{0.00} & \textbf{0.00} & \textbf{0.00} \\
 & 8 & \textbf{8876.83400} & \textbf{8876.83400} & \textbf{8876.83400} & \textbf{8876.83400} & 1624.82 & 77.92 & \textbf{44.12} & 64.31 & 1289.57 & \textbf{0.00} & \textbf{0.00} & \textbf{0.00} \\
 & 9 & 28228.93167 & \textbf{27995.23500} & \textbf{27995.23500} & \textbf{27995.23500} & TLR & 155.24 & 275.05 & \textbf{70.50} & 35.64 & \textbf{0.00} & \textbf{0.00} & \textbf{0.00} \\
 & 10 & 20909.16000 & \textbf{20370.97444} & \textbf{20370.97444} & \textbf{20370.97444} & TLR & 140.12 & 227.69 & \textbf{67.03} & 49.85 & \textbf{0.00} & \textbf{0.00} & \textbf{0.00} \\
 & 11 & 21746.32750 & \textbf{21695.36000} & \textbf{21695.36000} & \textbf{21695.36000} & TLR & 366.63 & 937.43 & \textbf{71.25} & 1105.64 & \textbf{0.00} & 0.06 & 0.01 \\
 & 12 & 33952.92500 & \textbf{33949.77000} & \textbf{33949.77000} & \textbf{33949.77000} & 1799.98 & 123.14 & 323.16 & \textbf{66.11} & 1450.54 & \textbf{0.00} & \textbf{0.00} & \textbf{0.00} \\
 & 13 & 44945.87100 & 43098.60500 & 48287.74300 & \textbf{43098.60435} & TLR & 589.82 & 1476.29 & \textbf{86.89} & 52.11 & \textbf{0.00} & 13.54 & \textbf{0.00} \\
 & 14 & 31407.00200 & 32898.55400 & 40393.43799 & \textbf{29029.53100} & TLR & 1169.30 & 1332.30 & \textbf{93.02} & 1511.01 & 15.24 & 19.30 & \textbf{0.00} \\
 & 15 & 42279.75200 & 45475.15200 & 64708.47900 & \textbf{40527.99300} & TLR & 1247.36 & 1766.16 & \textbf{99.62} & 311.96 & 14.26 & 37.11 & \textbf{0.00} \\
 & 16 & 47490.91900 & 49334.45800 & 54347.68000 & \textbf{44503.74900} & TLR & 1222.40 & 1325.24 & \textbf{92.63} & 1805.29 & 14.29 & 15.78 & \textbf{0.00} \\
 & 17 & 36084.89222 & 36466.57333 & 54024.20667 & \textbf{33581.20889} & TLR & 1111.14 & 1727.34 & \textbf{92.72} & 1565.81 & 8.76 & 34.57 & \textbf{0.00} \\
 & 18 & 47058.80700 & 47496.28100 & 74651.44400 & \textbf{40888.69999} & TLR & 1452.08 & 1676.58 & \textbf{111.32} & 1045.68 & 13.46 & 44.17 & \textbf{0.00} \\
 & 19 & 33712.84100 & 36922.42400 & 66602.83000 & \textbf{29024.95700} & TLR & 1314.25 & TLR & \textbf{102.88} & 66.78 & 18.97 & 53.55 & \textbf{0.00} \\
 & 20 & 52712.91300 & 53078.60900 & 76349.92300 & \textbf{48569.68783} & TLR & 1520.83 & 1548.72 & \textbf{131.62} & 327.90 & 7.06 & 32.22 & \textbf{0.00} \\
\bottomrule
\end{tabular}
}
\end{tiny}
\caption{Detailed results of the sparse density sets. Values are averaged over instances within each set. ``TLR'' = Time Limit Reached ($\geq 1800$\,s). Best values are reported in bold.}
\label{tab:agg-results}
\end{table}

\clearpage
\newpage
\section{Monolithic MILP formulation for Flexible Job Shop with Machine States and Time-Of-Use}
\label{app:FJSP_monolithic_ilp}

The following monolithic MILP models the Flexible Job Shop with Machine states and TOU problem. The binary decision variable $x_{jt}^k = 1$ only if an operation $j \in \mathcal{T}$ starts on machine $k \in \mathcal{M}_j$ at time $t \in \mathcal{I}$.
Binary decision variables $z_{lm} = 1$ only if the machine is performing an optimal SPACES transition between the intervals $l$ and $m$. Let $C_{\max}$ be a continuous decision variable. We denote as $p_j^k$ the processing time of task $j \in \mathcal{T}$ on machine $k$, and  $k_0$ the energy-intensive machine.

\begin{small}
\begin{align}
&\min\  C_{\max} && \label{eq:fjsp_obj}\\
&s.t.\ & \sum_{t \in \mathcal{I}} \sum_{k \in \mathcal{M}_j} x^k_{jt} &= 1 &\quad \forall j \in \mathcal{T} \label{eq:fjsp_all_operations_scheduled} \\
&& \sum_{t \in \mathcal{I}} \sum_{k_j \in \mathcal{M}_j} x_{jt}^{k_j} - \sum_{t \in \mathcal{I}} \sum_{k_i \in \mathcal{M}_i} (t + p_i^{k_i}) x_{it}^{k_i} &\geq 0 &\quad \forall (i,j) \in \mathcal{T}\times\mathcal{T}, i \prec j \label{eq:fjsp_precedences}\\
&& \sum_{\substack{j \in \mathcal{T} | k \in \mathcal{M}_j}} \sum_{l = \max\{1, t - p_j^k+1\}}^t x^k_{jl} &\leq 1 &\quad \forall k \in \mathcal{M}, t \in \mathcal{I} \label{eq:fjsp_one_operation_per_machine}\\
&& \sum_{j \in \mathcal{J}} \sum_{t \in \text{StartTime} \cup \text{EndTime}_j} x_{jt}^{k_0} &= 0 &\label{eq:fjsp_eliminate_impossible_times} \\
&& \sum_{j \in \mathcal{J}} \sum_{t' = \max\{1, t - p_j^{k_0} + 1\}}^{t + 1} x_{jt'}^{k_0} + \sum_{l=1}^{t + 1} \sum_{m=t + 1}^{h + 1} z_{lm} &= 1 &\quad \forall t \in \mathcal{I} \label{eq:fjsp_machine_states}\\
&& \sum_{t \in \mathcal{I}} \sum_{k \in \mathcal{M}_j} (t + p_{j}^k) x_{jt}^k &\leq C_{\max} &\quad \forall j \in \mathcal{T} \label{eq:fjsp_cmax}
\end{align}
\end{small}
Constraints~\eqref{eq:fjsp_all_operations_scheduled} ensure all operations are scheduled. Constraints~\eqref{eq:fjsp_precedences} enforce the precedence constraints, while constraints~\eqref{eq:fjsp_one_operation_per_machine} prevent operations from overlapping on machines. Constraints~\eqref{eq:fjsp_eliminate_impossible_times} and \eqref{eq:fjsp_machine_states} model the machine state part of the problem. Finally, constraints~\eqref{eq:fjsp_cmax} monitor the makespan \eqref{eq:fjsp_obj} of the problem.


\end{document}